\newcounter{notes}
\newcommand{\ignore}[1]{}
\newtheorem{theorem}{Theorem}
\newtheorem{proposition}[theorem]{Proposition}
\newtheorem{corollary}[theorem]{Corollary}
\newtheorem{lemma}[theorem]{Lemma}
\newtheorem{convention}[theorem]{Convention}
\newtheorem{observation}[theorem]{Observation}
\theoremstyle{definition}
\newtheorem{definition}[theorem]{Definition}
\newtheorem{remark}[theorem]{Remark}
\newtheorem{notation}[theorem]{Notation}
\newtheorem*{acknowledge}{Acknowledgements}
\newtheoremstyle{theoremwithref}{}{}{\itshape}{}{\bfseries}{.}{.5em}{#1 #2 #3}
\theoremstyle{theoremwithref}
\newcommand{\R}{\mathbb{R}}
\newcommand{\Z}{\mathbb{Z}}
\newcommand{\N}{\mathbb{N}}
\newcommand{\PSL}{\mathrm{PSL}(2,\mathbb{R})}
\newcommand{\SO}{\mathrm{SO}}
\newcommand{\Hom}{\mathrm{Hom}}
\newcommand{\HOS}{\mathrm{Homeo}^+(S^1)}
\newcommand{\HOZ}{\mathrm{Homeo}_\Z^+(\R)}
\newcommand{\rot}{\mathrm{rot}}
\newcommand{\rotild}{\widetilde{\mathrm{rot}}}
\newcommand{\eu}{\mathrm{eu}}
\newcommand{\id}{\mathrm{id}}
\newcommand{\Fix}{\mathrm{Fix}}
\DeclareMathOperator{\Homeo}{Homeo}
\title[Fuchsian actions from rigidity]{A characterization of Fuchsian actions by topological rigidity}
\author{Kathryn Mann}
\address{Department of Mathematics, Brown University, 151 Thayer Street, Providence, RI 02912, USA
}
\email{mann@math.brown.edu}
\author{Maxime Wolff}
\address{Sorbonne Universit\'es, UPMC Univ.\ Paris 06, Institut de Math\'ematiques
de Jussieu-Paris Rive Gauche, UMR 7586, CNRS, Univ. Paris Diderot, Sorbonne
Paris Cit\'e, 75005 Paris, France}
\email{maxime.wolff@imj-prg.fr}
\begin{document}

\maketitle
\numberwithin{theorem}{section}

\begin{abstract}
We prove that any rigid representation of $\pi_1\Sigma_g$ in $\HOS$ with Euler
number at least $g$ is necessarily semi-conjugate to a discrete,
faithful representation into $\PSL$.
Combined with earlier work of Matsumoto, this precisely characterizes
Fuchsian actions by a topological rigidity property.
Though independent, this work can be read as an introduction to the companion
paper \cite{RigidGeom} by the same authors.\\
\vspace{0.2cm}

\end{abstract}

\section{Introduction} \label{sec:Intro}

Let $\Sigma_g$ be a surface of genus $g \geq 2$, and let
$\Gamma_g = \pi_1(\Sigma_g)$.
The \emph{representation space} $\Hom(\Gamma_g, \HOS)$ is the set of all
actions of $\Gamma_g$ on $S^1$ by orientation-preserving homeomorphisms,
equipped with the compact-open topology.
This is also the space of \emph{flat topological circle bundles} over
$\Sigma_g$, or equivalently, the space of circle bundles with a foliation
transverse to the fibers.  
The \emph{Euler class} of a representation $\rho \in \Hom(\Gamma_g, \HOS)$ is
defined to be the Euler class of the associated bundle, and the
\emph{Euler number} $\eu(\rho)$ is the integer obtained by pairing the Euler
class with the fundamental class of the surface.
The classical Milnor-Wood inequality \cite{Milnor, Wood} is the statement
that the absolute value of the Euler number of a flat bundle is bounded by
the absolute value of the Euler characteristic of the surface.

While the Euler number determines the topological type of a flat $S^1$ bundle, 
it  doesn't nearly determine its flat structure -- except in the special case where the Euler number is maximal, i.e. equal to $\pm(2g-2)$.    
In this case, a celebrated result of Matsuomoto states that for any 
representation $\rho$ with $\eu(\rho) = \pm(2g-2)$, there
there is a continuous, degree one,
monotone map $h: S^1 \to S^1$ such that
\begin{equation} \label{eq:Semiconj}
h \circ \rho = \rho_F \circ h
\end{equation}
where $\rho_F$ is a discrete, faithful representation of $\Gamma_g$ into
$\PSL$ (i.e. a bijection onto a cocompact lattice).
We view $\PSL \subset \HOS$ via the action on $\R \mathrm{P}^1 \cong S^1$
by M\"obius transformations.

An important consequence of Matsumoto's theorem is that representations with
maximal Euler number are dynamically {\em stable} or {\em rigid} in the
following sense.

\begin{definition} \label{rig def}
  Let $\Gamma$ be a discrete group. A representation $\rho: \Gamma \to \HOS$
  is called \emph{path-rigid} if its path-component in $\Hom(\Gamma, \HOS)$
  consists of a single semi-conjugacy class.
\end{definition}

\emph{Semi-conjugacy} is the equivalence relation generated by the property
shared by $\rho$ and $\rho_F$ in \eqref{eq:Semiconj} above; we recall the
precise definition in Section \ref{prelimsec}.
As semi-conjugacy classes are connected in $\Hom(\Gamma_g, \HOS)$,
path-rigid representations are precisely those whose path-component is as
small as possible.
In fact, Matsumoto's work implies that maximal representations have the
the stronger property (called ``rigid''
in \cite{RigidGeom}) of defining an isolated point in
the \emph{character space} for representations of $\Gamma_g$ into $\HOS$;
a notion of rigidity that generalizes to representations into arbitrary
topological groups. However, for simplicity, we will not
define character spaces here and refer the reader to \cite{RigidGeom} for
details.

This paper proves the following converse to Matsumoto's rigidity result.

\begin{theorem}\label{thm:AuMoinsG}
  Let $\rho: \Gamma_g \to \HOS$ be a path-rigid representation, with
  $|\eu(\rho)| \geq g$. Then $\eu(\rho)$ is maximal, i.e.
  $| \eu(\rho)| = 2g-2$, and $\rho$ is semi-conjugate to a discrete,
  faithful representation into $\PSL$.
\end{theorem}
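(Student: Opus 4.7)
The plan is to proceed by contradiction. Suppose $\rho$ is path-rigid with $g \le |\eu(\rho)| \le 2g-3$; I will construct a continuous path $\{\rho_t\}$ in $\Hom(\Gamma_g, \HOS)$ starting at $\rho$ and ending at a representation not semi-conjugate to $\rho$. This contradicts path-rigidity and forces $|\eu(\rho)|=2g-2$, at which point Matsumoto's theorem (displayed in \eqref{eq:Semiconj}) yields the remaining conclusion that $\rho$ is semi-conjugate to a discrete faithful representation into $\PSL$.

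The central technical input I would aim for is the following: \emph{if $g \le |\eu(\rho)| \le 2g-3$, then there exists an essential simple closed curve $c \subset \Sigma_g$ with $\rho(c)$ having a global fixed point on $S^1$.} To establish this, I would fix a decomposition of $\Sigma_g$ along a system of disjoint simple closed curves $c_1,\dots,c_k$ (for instance a chain of $g$ one-holed tori glued to a planar piece), choose coherent lifts of each $\rho(c_i)$ to $\HOZ$, and expand $\eu(\rho)$ as a sum of translation numbers of commutators within each piece plus correction terms from the $\rot(\rho(c_i))$. Applying Milnor--Wood subsurface by subsurface, together with the strict inequality $|\eu(\rho)|<2g-2$, a pigeonhole argument should then force some $\rot(\rho(c_i))$ to be integral, equivalently force $\Fix(\rho(c_i))$ to be nonempty. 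The hypothesis $|\eu(\rho)|\ge g$ enters precisely by matching the number of handles in the decomposition, giving just enough slack between the Euler number and the piecewise Milnor--Wood bound to force this integrality.

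Given such a curve $c$ with $\rho(c)(x_0)=x_0$, I would then perform a standard bending deformation. Choose a continuous one-parameter family $\{\phi_t\}\subset \HOS$, with $\phi_0=\id$, each element fixing $x_0$ and commuting with $\rho(c)$; such a family is easily produced from a flow supported on a suitable $\rho(c)$-invariant subinterval. Writing $\Gamma_g$ as an amalgamated product or HNN extension along $\langle c\rangle$, define $\rho_t$ to agree with $\rho$ on one side of the splitting and equal $\phi_t\,\rho(\cdot)\,\phi_t^{-1}$ on the other. Because $\phi_t$ commutes with $\rho(c)$, this gives a well-defined continuous path in $\Hom(\Gamma_g, \HOS)$ starting at $\rho$.

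The main obstacle, in my view, is two-fold. First, the subsurface Euler-number bookkeeping used to produce the curve $c$ must be genuinely sharp; the threshold $g$ in the hypothesis suggests that an optimal decomposition (such as a chain of one-holed tori) is needed, and tracking commutator translation numbers modulo integers is delicate. Second, one must verify that for some $t$ the bent representation $\rho_t$ is actually not semi-conjugate to $\rho_0$. By Ghys's characterization, semi-conjugacy is equivalent to equality of bounded Euler classes, or equivalently to preservation of the cyclic orbit order on $S^1$; I would therefore seek an explicit word in the generators whose orbit of some point changes cyclic order as $t$ varies. Degenerate cases in which the naive choice of $\phi_t$ is absorbed into a semi-conjugacy would need to be ruled out, possibly by iterating the construction along a second curve or by refining the choice of flow $\phi_t$. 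Carried through, these steps would exhibit a non-trivial deformation of $\rho$, contradicting path-rigidity and completing the proof.
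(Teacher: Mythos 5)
There is a genuine gap, and it lies at the heart of your strategy rather than in the details. Your central mechanism is: find a simple closed curve $c$ with $\rho(c)$ having a fixed point, bend along $c$, and conclude that some bent representation is not semi-conjugate to $\rho$, contradicting path-rigidity. But having elements with fixed points is not an obstruction to rigidity: in a Fuchsian (maximal) representation every simple closed curve is sent to a hyperbolic element, which has fixed points and embeds in a one-parameter group, yet all such representations are path-rigid, and indeed all bendings of them stay in the same semi-conjugacy class. The same phenomenon appears throughout the paper's own argument (Corollary \ref{minconj}), where bending deformations of a path-rigid minimal representation are shown to be \emph{conjugate} to the original --- bending is used there as a tool to extract constraints, never as a source of flexibility. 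So the ``degenerate case'' you propose to rule out at the end (the bending being absorbed into a semi-conjugacy) is in fact the expected situation, and ruling it out is precisely the hard content of the theorem; your sketch gives no mechanism for it. Moreover, path-rigid representations with $|\eu(\rho)|\leq g-1$ do exist (\cite{KatieInvent}), so any argument deriving non-rigidity from such soft dynamical data alone cannot be correct.

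The first step is also not right as stated: the pigeonhole you propose is on the rotation numbers of the decomposition curves $c_i$, claiming that $g\leq|\eu(\rho)|\leq 2g-3$ forces some $\rot(\rho(c_i))$ to be integral. The additivity of relative Euler numbers over subsurfaces does not produce such a constraint on boundary rotation numbers, and no amount of sharpening the decomposition will make it do so. The way the hypothesis $|\eu(\rho)|\geq g$ is actually exploited in the paper (Lemma \ref{Fuchsian lem}) is a pigeonhole on the \emph{handles}: writing $\eu(\rho)$ as the translation number of a product of $g$ commutators, each bounded in absolute value by $1$ (Lemma \ref{max comm}), the bound $|\eu(\rho)|\geq g$ forces some commutator to have $\rotild=\pm1$, i.e.\ a Fuchsian torus. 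From there the paper proves the much stronger Theorem \ref{thm:Main}: path-rigidity plus one Fuchsian torus forces (after semi-conjugacy) hyperbolicity of the relevant simple closed curves, a Fuchsian-like cyclic order of their fixed points along directed chains, relative Euler number $\pm2$ on four-holed spheres, and hence maximality, after which Matsumoto's theorem concludes. Your proposal is missing all of this positive structure; the contradiction you aim for does not materialize from a single bending along a curve with a fixed point.
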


Thus, Fuchsian representations are characterized among all representations
with Euler number at least $g$ by path-rigidity.
The assumption $|\eu(\rho)| \geq g$ is not superfluous -- it is shown
in \cite{KatieInvent} that many representations with $|\eu(\rho)| \leq g-1$
are path-rigid as well. However, our assumption can be replaced with an
\emph{a priori} strictly weaker assumption on the dynamics of $\rho$,
phrased in terms of rotation numbers of elements, as follows.

\begin{theorem}\label{thm:Main}
  Suppose $\rho\colon\Gamma_g \to \HOS$ is path-rigid. If there exist based
  simple closed curves $a, b \in \Gamma_g$ with intersection number 1 and
  such that $\rotild[\rho(a), \rho(b)] = \pm 1$, then $\eu(\rho) = \pm(2g-2)$,
  and $\rho$ is semi-conjugate to a Fuchsian representation.
\end{theorem}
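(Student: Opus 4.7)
My plan is to show $|\eu(\rho)| = 2g-2$ and then invoke Matsumoto's theorem to obtain the semi-conjugacy to a Fuchsian representation. First, extend $a = a_1$ and $b = b_1$ to a standard symplectic generating set $a_1, b_1, \ldots, a_g, b_g$ of $\Gamma_g$ satisfying $\prod_{i=1}^g [a_i, b_i] = 1$. Choose lifts $\tilde A_i, \tilde B_i \in \HOZ$ of $\rho(a_i), \rho(b_i)$ and set $\tilde C_i := [\tilde A_i, \tilde B_i]$. The commutator is independent of the choice of lifts, and $\prod_{i=1}^g \tilde C_i$ is the central translation by $\eu(\rho)$. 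Wood's inequality gives $|\rotild(\tilde C_i)| \leq 1$ for each $i$, while the hypothesis yields $\rotild(\tilde C_1) = \pm 1$; after possibly composing $\rho$ with conjugation by an orientation-reversing homeomorphism of $S^1$ we may assume the sign is $+1$. The goal is then to force $\rotild(\tilde C_i) = 1$ for every $i$, which together with control on the defect of $\rotild$ along the full product $\prod_i \tilde C_i$ will give $\eu(\rho) = 2g-2$.

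The key technical ingredient is a structural analysis of pairs $(A, B) \in \HOS^2$ with $\rotild[\tilde A, \tilde B] = 1$. Such a \emph{maximal pair} is, up to semi-conjugacy, tightly constrained: $[A, B]$ has non-empty fixed-point set and the germs of the $\langle A, B\rangle$-action at those fixed points match the germs arising from a Fuchsian representation of the once-punctured torus group sending the boundary loop to a parabolic. In particular, $\rho|_{\langle a_1, b_1\rangle}$ is semi-conjugate to the restriction of a Fuchsian model to the free group on two generators, so that $(\rho(a_1), \rho(b_1))$ admits essentially no non-trivial perturbations staying in the semi-conjugacy class.

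The heart of the argument then proceeds by contradiction using path-rigidity. Suppose $\rotild(\tilde C_j) < 1$ for some $j \geq 2$. View the tuple $(\rho(a_i), \rho(b_i))_{i\geq 2}$ as a representation of the free group $F_{2(g-1)}$ subject to the single constraint $\prod_{i=2}^g [\rho(a_i), \rho(b_i)] = \rho([a_1, b_1])^{-1}$; equivalently, a representation of a genus $g-1$ surface with one boundary component whose boundary holonomy is prescribed. Using non-extremality at the $j$-th handle, I construct a continuous path $\rho_t$ in $\Hom(\Gamma_g, \HOS)$ starting at $\rho_0 = \rho$ which fixes $\rho(a_1)$ and $\rho(b_1)$, perturbs only the remaining generators while respecting the commutator constraint, and along which the rotation number of some element of $\Gamma_g$ varies non-trivially. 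This path produces representations in the path-component of $\rho$ that are not semi-conjugate to $\rho$, contradicting path-rigidity. Hence $\rotild(\tilde C_i) = 1$ for all $i$, $\eu(\rho) = 2g-2$, and Matsumoto's theorem completes the proof.

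The principal obstacle will be the construction of the non-trivial deformation in the previous paragraph: one must exhibit enough flexibility in the fiber of the iterated commutator map at non-extremal points, and identify a rotation-number invariant that separates $\rho_t$ from $\rho_0$ within the path-component. This is exactly the place where the structural lemma on maximal pairs plays its essential role, for it rigidly pins down the behaviour at the first handle and thereby guarantees that the perturbations at the remaining handles cannot be absorbed by a global semi-conjugacy.
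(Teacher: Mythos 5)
There is a genuine gap, in fact two, and they sit exactly where the real work of the theorem lies. First, your central step --- ``using non-extremality at the $j$-th handle, I construct a continuous path $\rho_t$ fixing $\rho(a_1),\rho(b_1)$, respecting the commutator constraint, along which some rotation number varies'' --- is asserted, not proved, and you acknowledge this yourself in the last paragraph. This is precisely the content of the theorem: producing a deformation that escapes the semi-conjugacy class is the whole difficulty, and nothing about having $\rotild(\widetilde C_j)<1$ at one handle by itself yields such a path (note that many non-maximal representations \emph{are} path-rigid, so ``there is slack somewhere, hence I can deform non-trivially'' cannot be a purely local argument; one must exploit the global structure). The paper gets this flexibility in a completely different way: bending deformations along simple closed curves realizing powers of Dehn twists (possible because the relevant images are hyperbolic or have fixed points, hence embed in one-parameter groups), the fact that for a path-rigid \emph{minimal} representation such bendings give representations \emph{conjugate} to $\rho$ (Corollary \ref{minconj}), and then a contradiction extracted from the cyclic order of attracting/repelling fixed points (Lemmas \ref{HypQuinconce}, \ref{lem:Ordre3Chaines}, Proposition \ref{prop:Ordre5Chaines}), not from a rotation number of a generator varying along an explicit fiberwise perturbation.

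Second, even if you could force $\rotild(\widetilde C_i)=1$ for every handle, the conclusion $\eu(\rho)=2g-2$ does not follow from ``control on the defect of $\rotild$ along the full product.'' The Euler number is $\rotild(\widetilde C_1\cdots \widetilde C_g)$, and $\rotild$ is only a quasimorphism with defect $1$: from $\rotild(\widetilde C_i)=1$ for all $i$ you get a priori only $1\leq \eu(\rho)\leq 2g-1$ (and $\leq 2g-2$ by Milnor--Wood); e.g.\ for $g=2$ two commutators each of translation number $1$ can easily compose to something with translation number $1$. The extremal value requires knowing how the dynamics of the different handles interlock on the circle. This is why the paper does not argue handle-by-handle on a standard generating set at all: it first upgrades the single Fuchsian torus to hyperbolicity of (many) simple closed curves (Section \ref{hypsec}), then pins down the cyclic configuration of their fixed points (Section \ref{fixsec}), and finally computes $\eu(\rho)$ as a sum of relative Euler numbers over a pants decomposition, showing each four-holed sphere contributes $\pm 2$ with consistent sign (Proposition \ref{prop:S4} and Proposition \ref{prop:AdditiveEuler}). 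Your appeal to Matsumoto's structure of maximal pairs for the first handle and to Matsumoto's theorem at the end are fine, but the two bridges between them --- the non-trivial deformation and the passage from handle-wise maximality to global maximality --- are exactly the missing content.
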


The hypothesis $\rotild[\rho(a),\rho(b)]=\pm 1$ is equivalent to the statement
that the restriction of the representation to the torus defined by $a$ and $b$
is semi-conjugate to a geometric one (see \cite{MatsBP}). Thus, one can think
of the statement above as a local--to--global result: the local condition that
a torus is Fuchsian, together with path-rigidity, implies the global statement
that the representation is Fuchsian.

\subsection{Geometric representations}   
If $M$ is a manifold, and $\Gamma$ a discrete group, a representation
$\rho\colon\Gamma \to \Homeo^+(M)$ is called \emph{geometric} if it is a
bijection onto a cocompact lattice in a transitive, connected Lie group in
$\Homeo(M)$.

It is not difficult to prove that the transitive Lie subgroups of $\HOS$ are
precisely $\SO(2)$ and the central extensions of $\PSL$ by finite cyclic
groups (see~\cite{Ghys01}). Thus, the geometric representations
$\Gamma_g \to \HOS$ are either Fuchsian or obtained by lifting a Fuchsian
representation to one of these central extensions of $\PSL$.
The main result of \cite{KatieInvent} implies that all geometric
representations are path-rigid (in fact, the proof shows the stronger result
that they are rigid in the character space sense), and it was conjectured
there the converse held as well.

This paper proves that conjecture under an additional assumption, which rules
out the case that $\rho$ could be a lattice in a nontrivial central extension
of $\PSL$. This assumption greatly simplifies the situation,
allowing us to give a short proof of the converse to Matsumoto's result.
The general converse is the subject of our recent paper \cite{RigidGeom}.
Though self-contained and independent, the present work is also intended to
serve as an introduction to the ideas in \cite{RigidGeom}, communicating some
of the philosophy of the proof in a simplified setting that avoids much of the
technical nightmare. In this spirit, we have taken care to make the proof here
as explicit and elementary as possible.

\subsection{Outline}
In Section \ref{prelimsec} we recall standard material on dynamics of groups
acting on the circle, including rotation numbers and the Euler number for
actions of surface groups. We then introduce important tools for the proof
of Theorem \ref{thm:Main}, and give a quick proof that Theorem \ref{thm:Main}
implies Theorem \ref{thm:AuMoinsG}.

Sections~\ref{hypsec} through~\ref{subsurface sec} are devoted to the proof
of Theorem~\ref{thm:Main}. The general strategy is as follows. Given a
representation $\rho$ satisfying the hypotheses of Theorem~\ref{thm:Main},
we show that:
\begin{enumerate}[1.]
\item After modifying $\rho$ by a semi-conjugacy, there exists
  $a \in \Gamma_g$ represented by a nonseparating simple closed curve and
  such that $\rho(a)$ is \emph{hyperbolic}, meaning that it is conjugate to
  a hyperbolic element of $\PSL$.
\item Using step 1, we then show that (again after semi-conjugacy of $\rho$),
  \emph{any} $\gamma \in \Gamma_g$ represented by a nonseparating simple
  closed curve has the property that $\rho(\gamma)$ is hyperbolic.
  These two first steps are done in Section~\ref{hypsec}.
\item Next, we start to ``reconstruct the surface'', showing that the
  arrangement of attracting and repelling points of hyperbolic elements
  $\rho(\gamma)$, as $\gamma$ ranges over simple closed curves, mimics that
  of a Fuchsian represenation into $\PSL$.
  This is carried out in Section \ref{fixsec}.
\item Finally, in Section \ref{subsurface sec} we show that the restriction
  of $\rho$ to small subsurfaces is semi-conjugate to a Fuchsian
  representation; this is then improved to a global result by additivity of
  the \emph{relative Euler class}.
\end{enumerate}

Throughout this paper, whenever we say ``deformation'', we mean deformation
along a continuous path in $\Hom(\Gamma_g, \HOS)$.

\begin{acknowledge}
This work was started at MSRI during spring 2015 at a program supported by
NSF grant 0932078.
Both authors also acknowledge support of the U.S. National Science Foundation
grants DMS 1107452, 1107263, 1107367
``RNMS: Geometric Structures and Representation Varieties'' (the GEAR network).
K. Mann was partially supported by NSF grant DMS-1606254, and thanks the
Inst. Math. Jussieu and Foundation Sciences Math\'ematiques de Paris.
This work was finished while M. Wolff was visiting the Universidad de la
Rep\'ublica, Montevideo, Uruguay.
\end{acknowledge}

\section{Preliminaries}  \label{prelimsec}

This section gives a very quick review of basic concepts used later in the
text. The only material that is not standard is the
\emph{based intersection number} discussed in Section \ref{ssec:Tori}.

\subsection{Rotation numbers and the Euler number}\label{ss:rot}

Let $\HOZ$ denote the group of homeomorphisms of $\R$ that commute with
integer translations, this is a central extension of $\HOS$ by $\Z$.
The primary dynamical invariant of such homeomorphisms is the translation
or rotation number, whose use can be traced back to work of
Poincar\'e \cite{Poincare}.  
\begin{definition}[Poincar\'e]
  Let $\widetilde{g} \in \HOZ$ and $x \in \R$. The \emph{translation number}
  $\rotild(\widetilde{g})$ is defined by
  $\rotild(\widetilde{g}) := \lim \limits_{n \to \infty}
  \frac{\widetilde{g}^n(x)}{n}$,
  where $x$ is any point in $\R$.
  For $g \in \HOS$, the ($\R/\Z$-valued) \emph{rotation number} of $g$ is
  defined by $\rot(g) := \rotild(\widetilde{g}) \, \mathrm{mod}\, \Z$, where
  $\widetilde{g}$ is any lift of $g$.
\end{definition} 

It is a standard exercise to show that these limits exist, and are
independent of the choice of point $x$. Note that $\rot(g)$ is also
independent of the choice of
lift $\widetilde{g}$, and that $\rot$ is invariant under conjugacy.
(In fact it is invariant under semi-conjugacy as well.)

One way of defining the Euler number of a representation is in terms of
translation numbers. This was perhaps first observed by Milnor and
Wood \cite{Milnor, Wood}, who showed the following. For the purposes of this
work, the reader may take this as the \emph{definition} of Euler number.

\begin{proposition}\label{prop:DefMilnor}
  Consider a standard presentation 
  \[\Gamma_g = \langle a_1, b_1, \ldots, a_g, b_g \mid
  \prod_i [a_i, b_i] \rangle. \]
  Let $\rho \in \Hom(\Gamma_g, \HOS)$, and let $\widetilde{\rho(a_i)}$ and
  $\widetilde{\rho(b_i)}$ be any lifts of $\rho(a_i)$ and $\rho(b_i)$ to
  $\HOZ$. Then the \emph{Euler number} $\eu(\rho)$ is given by
  $$\eu(\rho)= \rotild \left( [\widetilde{\rho(a_1)}, \widetilde{\rho(b_1)}]
  \cdots [\widetilde{\rho(a_g)}, \widetilde{\rho(b_g)}] \right).$$
\end{proposition}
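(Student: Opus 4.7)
The plan is to establish the formula in two stages: first check that the right-hand side is a well-defined integer attached to $\rho$, then identify it with the topological (cohomological) definition of the Euler number.

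For the first stage, I would observe that the kernel of the central extension $\HOZ \to \HOS$ consists of the integer translations $T^n: x \mapsto x + n$, which are central in $\HOZ$. Consequently, any two lifts of $\rho(a_i)$ differ by a central element, so the individual commutators $[\widetilde{\rho(a_i)}, \widetilde{\rho(b_i)}]$, and hence the product $P := \prod_i [\widetilde{\rho(a_i)}, \widetilde{\rho(b_i)}]$, do not depend on the choice of lifts. Since $\rho$ respects the surface relation, $P$ projects to the identity in $\HOS$, so $P = T^{n}$ for a unique integer $n = n(\rho)$. Because $\rotild(T^n) = n$, the proposition reduces to showing that $n(\rho) = \eu(\rho)$.

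For the second stage, recall that the Euler class is the obstruction class in $H^2(\HOS;\Z)$ classifying the extension $\Z \to \HOZ \to \HOS$; choosing any set-theoretic section $\sigma: \HOS \to \HOZ$, it is represented by the inhomogeneous cocycle $c(g,h) = \sigma(g)\sigma(h)\sigma(gh)^{-1} \in \Z$. The Euler number $\eu(\rho)$ is, by definition, the evaluation of $\rho^* c$ on the fundamental class $[\Sigma_g]$. Representing $\Sigma_g$ as a $4g$-gon with boundary identified according to $\prod_i [a_i, b_i]$ gives an explicit 2-cycle in the bar complex, and choosing $\sigma$ compatibly with the given lifts reduces the pairing to measuring the discrepancy between the identity and $\sigma$ applied to the relator. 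A direct unwinding of the bar cocycle along the polygon identifies this discrepancy with $P = T^{n(\rho)}$, giving $\eu(\rho) = n(\rho)$.

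The main obstacle is the bookkeeping in the second stage: one must carefully correlate the inhomogeneous cocycle with the polygonal 2-cycle representing $[\Sigma_g]$ and verify that the various ``error terms'' telescope into the product of commutators. A cleaner alternative is the topological viewpoint, which avoids cocycle manipulations: any flat $S^1$-bundle over $\Sigma_g$ is built by taking the product bundle over a $4g$-gon and gluing its sides via the monodromies $\rho(a_i), \rho(b_i)$; lifting to $\HOZ$, the Euler number of the resulting bundle is precisely the number of full twists accumulated along the relator loop, which is the integer $n$ determined by $P = T^n$. Either route yields the identification $\eu(\rho) = \rotild(P)$.
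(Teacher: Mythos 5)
The paper itself offers no proof of this proposition: it attributes the formula to Milnor and Wood and explicitly invites the reader to take it as the \emph{definition} of $\eu(\rho)$, so there is no in-paper argument to compare yours against; what you have written is an outline of the classical Milnor--Wood argument. Your first stage is complete and correct: lifts differ by central translations, so each commutator and hence the product $P$ is well defined; $P$ covers the identity of $S^1$, so $P(x)-x$ is a continuous integer-valued function, hence $P=T^n$ and $\rotild(P)=n$. The second stage is the right route but its two essential steps are asserted rather than proved. First, you say that $\eu(\rho)$ is ``by definition'' the pairing of $\rho^*c$ with $[\Sigma_g]$, where $c$ is the cocycle of the extension $\Z\to\HOZ\to\HOS$; but the paper defines $\eu(\rho)$ as the Euler number of the associated flat circle bundle, and identifying that topological class with the pullback of the extension class is itself a genuine (if standard) step --- it uses that $\HOZ$ is the universal cover of the topological group $\HOS$, which deformation retracts to $\SO(2)$, so that the extension class corresponds to the universal Euler class. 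Second, the ``bookkeeping'' you defer --- evaluating the bar cocycle on the polygonal $2$-cycle (equivalently, invoking Hopf's formula for a one-relator presentation with relator in the commutator subgroup) and checking that the error terms telescope to $P$ --- is exactly where the content of the proposition lies, and a complete write-up must carry it out (it also pins down the sign, which depends on orientation conventions). So: correct strategy, correct first half, but as it stands a sketch of the classical proof rather than a proof.
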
 

\noindent Note that, for any $f$ and $g \in \HOS$, the value of the
commutator $[\widetilde{f}, \widetilde{g}]$ is independent of the choice
of lifts $\widetilde{f}$ and $\widetilde{g}$ in $\HOZ$.

As remarked in the introduction, the Milnor--Wood inequality is the statement
that $| \eu(\rho) | \leq 2g-2$.
For a simpler surface as a one-holed torus, we have the following variation,
which was essentially proved in \cite{Wood};
see the discussion following Lemma~\ref{rk:EulerBound}
below.

\begin{lemma}\label{max comm}
  Let $[\widetilde{f}, \widetilde{g}] \in \HOZ$ be a commutator.
  Then $-1 \leq \rotild([\widetilde{f}, \widetilde{g}]) \leq 1$.
\end{lemma}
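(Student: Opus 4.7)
The plan is to deduce this from the Milnor--Wood inequality for the one-holed torus---equivalently, from the fact that $\rotild$ is a homogeneous quasi-morphism on $\HOZ$ with defect at most~$1$.

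The quickest route is via quasi-morphisms: write $[\widetilde{f}, \widetilde{g}] = (\widetilde{f}\widetilde{g}\widetilde{f}^{-1}) \cdot \widetilde{g}^{-1}$. Since $\widetilde{f}\widetilde{g}\widetilde{f}^{-1}$ is conjugate to $\widetilde{g}$ in $\HOZ$, conjugation invariance of $\rotild$ gives $\rotild(\widetilde{f}\widetilde{g}\widetilde{f}^{-1}) = \rotild(\widetilde{g})$; combined with homogeneity $\rotild(\widetilde{g}^{-1}) = -\rotild(\widetilde{g})$, the defect inequality
\[
|\rotild(\widetilde{a}\widetilde{b}) - \rotild(\widetilde{a}) - \rotild(\widetilde{b})| \leq 1
\]
immediately yields $|\rotild([\widetilde{f}, \widetilde{g}])| \leq 1$.

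The substantive step is the defect bound. Its starting point is that, for any $\widetilde{h} \in \HOZ$, the $1$-periodic displacement function $x \mapsto \widetilde{h}(x) - x$ has oscillation strictly less than $1$ (if its sup and inf are realized at $x_0, x_1 \in [0,1]$, monotonicity of $\widetilde{h}$ together with $\widetilde{h}(x+1) = \widetilde{h}(x)+1$ forces the oscillation to be at most $|x_1 - x_0| < 1$), and $\rotild(\widetilde{h})$ lies in its range. A naive application of this to the product $\widetilde{f}\widetilde{g}$ yields only defect $<2$; the sharpening to $\leq 1$ is the essence of Wood's argument in \cite{Wood} and is equivalent to the Milnor--Wood bound on the relative Euler number of a flat $S^1$-bundle over the one-holed torus (which has $|\chi|=1$). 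This sharpening---or equivalently, a direct geometric argument interpreting $\rotild([\widetilde{f},\widetilde{g}])$ as the relative Euler number of the flat bundle over $T^2 \setminus D$ with monodromy $(f,g)$ along the standard generators---is the main obstacle, the rest of the argument being formal.
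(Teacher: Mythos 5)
Your formal reduction is fine as far as it goes: conjugation invariance of $\rotild$, homogeneity, and a defect bound of $1$ do immediately give $|\rotild([\widetilde f,\widetilde g])|\leq 1$ (the difference in commutator conventions is immaterial). But as a proof there is a genuine gap: the only substantive ingredient, the defect inequality $|\rotild(\widetilde a\widetilde b)-\rotild(\widetilde a)-\rotild(\widetilde b)|\leq 1$, is never established. You note yourself that the naive displacement estimate only gives defect $<2$, and that the sharpening to $1$ is ``the essence of Wood's argument'' and is ``equivalent to the Milnor--Wood bound'' for the one-holed torus -- but that last statement \emph{is} Lemma~\ref{max comm} (for a one-holed torus, $\eu_P(\rho)=\rotild[\widetilde{\rho(a)},\widetilde{\rho(b)}]$, as remarked before Lemma~\ref{rk:EulerBound}), and indeed for a homogeneous quasimorphism the defect equals $\sup_{a,b}|\phi([a,b])|$, so you have reduced the lemma to a statement equivalent to it and proved neither. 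In fairness, the paper does not prove this lemma either: it treats it as classical, pointing to \cite[Prop.~4.8]{Wood} and \cite[Thm.~3.9]{CalegariWalker} in the discussion following Lemma~\ref{rk:EulerBound}. So your proposal matches the paper's level of detail, but it does not supply the missing argument; also, your justification of the oscillation bound only treats one of the two relative positions of the extremizers (the conclusion is correct in both cases).

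If you want a self-contained proof, the commutator case is actually shorter than the general defect bound and uses only the oscillation estimate you already have. With the paper's convention, the canonical lift is $[\widetilde f,\widetilde g]=\widetilde g^{-1}\widetilde f^{-1}\widetilde g\widetilde f=(\widetilde f\widetilde g)^{-1}(\widetilde g\widetilde f)$, so $[\widetilde f,\widetilde g](x)\leq x+1$ is equivalent to $\widetilde g\widetilde f(x)\leq\widetilde f\widetilde g(x)+1$. Writing $\phi(x)=\widetilde f(x)-x$ and $\psi(x)=\widetilde g(x)-x$ (continuous, $1$-periodic, each of oscillation $<1$), one computes $\widetilde g\widetilde f(x)-\widetilde f\widetilde g(x)=\bigl(\psi(\widetilde f(x))-\psi(x)\bigr)+\bigl(\phi(x)-\phi(\widetilde g(x))\bigr)$. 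Evaluating at a point $x_0$ where $\phi$ attains its minimum makes the second bracket $\leq 0$ and the first $<1$, so $[\widetilde f,\widetilde g](x_0)<x_0+1$; composing with the central translation by $-1$ and using monotonicity gives $\rotild([\widetilde f,\widetilde g])\leq 1$, and the lower bound follows from $[\widetilde f,\widetilde g]^{-1}=[\widetilde g,\widetilde f]$ together with $\rotild(h^{-1})=-\rotild(h)$. This is essentially Wood's argument; adding it (or simply citing \cite{Wood} or \cite{CalegariWalker}, as the paper does) would close the gap, whereas routing through the defect bound for arbitrary products requires strictly more work than the statement you need.
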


Though unimportant in the preceding remarks, in what follows we will need
to fix a convention for commutators and group multiplication.
\begin{convention}
  We read words in $\Gamma_g$ from right to left, so that group multiplication
  coincides with function composition. (This is convenient for dealing with
  representations to $\HOS$.) We set the notation for a commutator as
  \[[a,b]:= b^{-1} a^{-1} b a. \]
\end{convention}

\subsection{Dynamics of groups acting on $S^1$}

The material in this section is covered in more detail in \cite{Ghys01}
and \cite{KatieSurvey}.

\begin{definition}[Ghys \cite{Ghys87}]\label{def:SC}
  Let $\Gamma$ be a group. Two representations $\rho_1$, $\rho_2$ in
  $\Hom(\Gamma, \HOS)$ are \emph{semi-conjugate} if there is a monotone
  (possibly non-continuous or non-injective) map
  $\widetilde{h}\colon\R\to\R$ such that
  $\widetilde{h}(x+1)=\widetilde{h}(x)+1$ for all $x\in\R$, and such that,
  for all $\gamma \in \Gamma$, there are lifts $\widetilde{\rho_1(\gamma)}$
  and $\widetilde{\rho_2(\gamma)}$ such that
  $\widetilde{h}\circ\widetilde{\rho_1(\gamma)}=
  \widetilde{\rho_2(\gamma)}\circ \widetilde{h}$.
\end{definition}

Ghys \cite{Ghys87} showed that semi-conjugacy is an equivalence relation
on $\Hom(\Gamma, \HOS)$ (see also \cite{KatieSurvey} for an exposition of the
proof). In fact, it is the equivalence relation generated by the relationship
shared by $\rho$ and $\rho_F$ in Equation \eqref{eq:Semiconj} of Section \ref{sec:Intro}.

The next proposition states a useful dynamical trichotomy for groups acting
on the circle, which in particular can be used to explain when a
semi-conjugacy map can be taken to be continuous.  As it is classical, we do
not repeat the proof; the reader may refer to \cite[Prop.~5.6]{Ghys01}.

\begin{proposition}\label{prop:Trichotomy}
  Let $G \subset \HOS$. Then exactly one of the following holds:
  \begin{enumerate}[i)]
  \item $G$ has a finite orbit.
  \item $G$ is \emph{minimal}, meaning that all orbits are dense.
  \item There is a unique compact $G$-invariant subset of $S^1$ contained in
    the closure of any orbit, on which $G$ acts minimally. This set is
    homeomorphic to a Cantor set and called the
    \emph{exceptional minimal set} for $G$.
  \end{enumerate}
\end{proposition}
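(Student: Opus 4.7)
The plan is the classical one: produce a minimal closed $G$-invariant subset of $S^1$ by Zorn's lemma, classify its topological type using that $G$ acts by homeomorphisms, and, in the third case, argue it sits inside every orbit closure.

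First, I would apply Zorn to the poset of nonempty closed $G$-invariant subsets of $S^1$ ordered by reverse inclusion; compactness of $S^1$ guarantees that every descending chain has nonempty intersection, so a minimal element $K$ exists, on which the $G$-action is minimal in the sense that every orbit is dense in $K$. Now the set of accumulation points of $K$ and the topological boundary $\partial K$ are both closed and $G$-invariant, so by minimality each equals $\emptyset$ or $K$. The boundary alternative gives either $K = S^1$ or $K$ nowhere dense; the accumulation-point alternative gives either $K$ perfect or $K$ discrete, hence finite by compactness. Combining these, $K$ is finite (giving case (i)), equals $S^1$ (in which case every nonempty closed $G$-invariant subset of $S^1$ equals $S^1$ by minimality, so every orbit is dense and we are in case (ii)), or $K$ is a nonempty perfect nowhere-dense compact subset of $S^1$, i.e.\ a Cantor set.

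The main obstacle is case (iii): showing that this Cantor minimal set $K$ is unique and sits inside every orbit closure. Given $x \in S^1$, the set $\overline{Gx} \cap K$ is a closed $G$-invariant subset of $K$, so by minimality of $K$ it is either $\emptyset$ or $K$. Suppose for contradiction that $\overline{Gx} \cap K = \emptyset$. Then $\overline{Gx}$ is a compact set contained in $S^1 \setminus K$, which is a countable disjoint union of open arcs (the gaps of $K$); by compactness $\overline{Gx}$ meets only finitely many of these gaps. Since $G$ permutes the set of gaps of $K$ and preserves $\overline{Gx}$, this finite collection of gaps is itself $G$-invariant, and the endpoints of these finitely many gaps form a finite nonempty closed $G$-invariant subset of $K$, contradicting $K$ being infinite and minimal. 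Hence $K \subseteq \overline{Gx}$ for every $x \in S^1$, which simultaneously establishes the containment statement and, applied to a point of any other minimal closed $G$-invariant set $L$, forces $K \subseteq L$ and so $K = L$. Mutual exclusivity of the three cases is then immediate: a finite orbit cannot contain the infinite $K$ of case (ii) or (iii), and $G$ acting minimally on $S^1$ rules out the proper closed invariant Cantor set of (iii).
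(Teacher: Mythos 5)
Your proof is correct, and since the paper deliberately omits a proof of this classical trichotomy (deferring to Ghys's survey, Prop.~5.6), there is nothing to contrast it with: your argument --- Zorn's lemma on nonempty closed invariant sets, the derived-set and boundary dichotomies forced by minimality, and the finitely-many-gaps argument showing the Cantor minimal set is $G$-invariantly isolated in no orbit closure's complement --- is precisely the standard proof the paper is citing. In particular the key step, that a compact orbit closure disjoint from $K$ would meet only finitely many gaps and hence yield a finite invariant subset of the infinite minimal set $K$, is handled correctly and gives both the containment in every orbit closure and the uniqueness at once.
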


In case $iii)$, defining $h$ to be a map that collapses each interval in the
complement of the exceptional minimal set to a point gives the following
(we leave the proof as an exercise).

\begin{proposition} \label{prop:MinimalConj}
  Let $\rho\colon G \to \HOS$ be a homomorphism such that $\rho(G)$ has an
  exceptional minimal set. Then $\rho$ is semi-conjugate to a homomorphism
  $\nu$ whose image is minimal. Moreover, provided that $\nu$ is minimal,
  any semi-conjugacy $h$ to any representation $\rho'$ such that
  $h \circ \rho' = \nu \circ h$ is necessarily continuous.
\end{proposition}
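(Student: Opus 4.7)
The statement has two parts, and I would treat them separately.

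\emph{First part.} Given an exceptional minimal set $K \subset S^1$, the plan is to build the semi-conjugacy $h$ by collapsing each connected component of $S^1 \setminus K$ to a single point. To check this works cleanly, I would lift everything to $\R$: let $\widetilde{K} \subset \R$ be the preimage of $K$ under the covering map, so $\R \setminus \widetilde{K}$ is a countable disjoint union of open intervals, permuted by any lift $\widetilde{\rho(g)} \in \HOZ$ and by the integer translations. Collapsing each such interval to a point yields a quotient map $\widetilde{h}\colon\R\to\R'$, where $\R'$ is still homeomorphic to $\R$ (the Cantor set $K$ provides an order-preserving identification of the quotient with $\R$ after collapsing the gaps). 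Since $\rho(G)$ permutes the complementary intervals, the action descends to a homomorphism $\widetilde{\nu}$ commuting with integer translations on $\R'$, which gives the desired $\nu \in \Hom(G,\HOS)$. Minimality of $\nu$ on $S^1$ is immediate from the fact that $G$ acts minimally on $K$ itself. This part is straightforward bookkeeping.

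\emph{Moreover part.} Now suppose $\nu$ is minimal and $h \circ \rho' = \nu \circ h$; I want to show any such monotone $h$ is automatically continuous. The main idea is to use minimality to rule out jumps. Fix a lift $\widetilde{h}\colon\R\to\R$ satisfying $\widetilde{h}(x+1)=\widetilde{h}(x)+1$ and the intertwining relation with lifts of $\rho'$ and $\nu$. Consider the image $\widetilde{h}(\R) \subset \R$. The intertwining relation forces $\widetilde{\nu(g)}$ to map $\widetilde{h}(\R)$ into itself, and hence, by continuity of $\widetilde{\nu(g)}$, to preserve the closure $\overline{\widetilde{h}(\R)}$. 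This closure is also $\Z$-invariant (from $\widetilde{h}(x+1)=\widetilde{h}(x)+1$), so it projects to a closed $\nu(G)$-invariant subset of $S^1$.

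The obstruction I would have to push against is the possibility that $\widetilde{h}$ has a jump discontinuity at some point $x$. If it does, then $\widetilde{h}(\R)$ misses a nonempty open subinterval of $(\widetilde{h}(x^-),\widetilde{h}(x^+))$, and because that subinterval is open, it is also disjoint from $\overline{\widetilde{h}(\R)}$. Thus $\overline{\widetilde{h}(\R)}$ would descend to a proper closed $\nu(G)$-invariant subset of $S^1$, contradicting the assumed minimality of $\nu$. Hence $\widetilde{h}$ has no jumps, so $\widetilde{h}$ (and therefore $h$) is continuous. I expect the only delicate step to be the first part's identification of the quotient with $S^1$ in an equivariant way; the moreover part, once the right invariant set is identified, is a short minimality argument.
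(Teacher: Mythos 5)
Your proposal is correct and follows exactly the construction the paper indicates (it only sketches the collapsing of complementary intervals of the exceptional minimal set and leaves the verification as an exercise): you collapse the gaps equivariantly to get the minimal $\nu$, and your ``moreover'' argument -- that a jump of $\widetilde{h}$ would make $\overline{\widetilde{h}(\R)}$ descend to a proper closed $\nu(G)$-invariant subset of $S^1$, contradicting minimality -- is the standard and sound way to finish. No gaps to report.
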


We will make frequent use of the following two consequences of
Proposition \ref{prop:MinimalConj}.

\begin{corollary}\label{cor:MinConj}
  Suppose that $\rho$ and $\rho'$ are semi-conjugate representations.
  If both $\rho$ and $\rho'$ are minimal, then they are \emph{conjugate}.
\end{corollary}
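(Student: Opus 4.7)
My plan is to take a monotone map $h$ witnessing the semi-conjugacy, promote it to a continuous map using the second assertion of Proposition~\ref{prop:MinimalConj}, and then use minimality of $\rho$ to force $h$ to be injective, hence a homeomorphism.

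Unpacking Definition~\ref{def:SC}, there is a monotone degree-one map $h\colon S^1\to S^1$, induced by the lift $\widetilde h$, satisfying $h\circ\rho(\gamma)=\rho'(\gamma)\circ h$ for every $\gamma\in\Gamma$. Since $\rho'$ is minimal, the second assertion of Proposition~\ref{prop:MinimalConj} applies and guarantees that $h$ is continuous. It only remains to show that $h$ is injective.

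Suppose for contradiction that $h$ is not injective. Being continuous, monotone, and of degree one, $h$ is then constant on some nondegenerate closed interval. Let $U\subset S^1$ be the union of the interiors of the maximal closed intervals of constancy of $h$; equivalently, $U$ is the union of interiors of the nondegenerate fibers of $h$. Then $U$ is open and nonempty. For any $\gamma\in\Gamma$ and any interval $I$ of constancy of $h$, the identity $h(\rho(\gamma)(I))=\rho'(\gamma)(h(I))$ shows that $\rho(\gamma)(I)$ is contained in a fiber of $h$; as $\rho(\gamma)$ is a homeomorphism of $S^1$, it sends the interior of a maximal interval of constancy into $U$. Hence $U$ is $\rho$-invariant, and minimality of $\rho$ forces $U=S^1$. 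But then $h$ is locally constant on all of $S^1$, hence constant by connectedness, contradicting the fact that $h$ has degree one. Therefore $h$ is injective, hence a homeomorphism, and $\rho$ is conjugate to $\rho'$.

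The main obstacle is conceptual rather than technical: one has to recognize that Proposition~\ref{prop:MinimalConj} supplies precisely the missing regularity on $h$, after which the argument collapses to a short connectedness-versus-degree observation on $S^1$.
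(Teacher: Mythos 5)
Your proof is correct and follows the same route the paper intends: the paper simply asserts that the corollary ``follows immediately from Proposition~\ref{prop:MinimalConj}'', and your argument is the natural filling-in of that assertion, using the proposition to get continuity of the semi-conjugacy and then minimality of the source (via the invariance of the union of interiors of nondegenerate fibers) to force injectivity. Both hypotheses are used correctly and the degree-one/connectedness conclusion is sound, so nothing is missing.
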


\begin{corollary}\label{cor:MinRep}
  Let $\rho \in \Hom(\Gamma_g, \HOS)$ be a path-rigid representation.
  Then $\rho$ is semi-conjugate to a minimal representation.
\end{corollary}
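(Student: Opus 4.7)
The plan is to apply the dynamical trichotomy of Proposition~\ref{prop:Trichotomy} to the image $\rho(\Gamma_g)\subset\HOS$. If $\rho$ is itself minimal, take $\nu=\rho$. If $\rho(\Gamma_g)$ admits an exceptional minimal Cantor set, Proposition~\ref{prop:MinimalConj} directly produces the desired semi-conjugate minimal representation by collapsing each complementary arc to a point. The remaining case is that $\rho(\Gamma_g)$ has a finite orbit $F\subset S^1$, and ruling this out using path-rigidity is the substantive step.

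For this, I plan to argue by contradiction: assuming $F$ is a finite orbit of $\rho$, I will construct a continuous one-parameter deformation $(\rho_t)_{t\in[0,\varepsilon)}\subset\Hom(\Gamma_g,\HOS)$ with $\rho_0=\rho$ such that the rotation number $\rot(\rho_t(\gamma))$ of some element $\gamma\in\Gamma_g$ varies non-trivially in $t$. Since rotation numbers are semi-conjugacy invariants, such a deformation would exhibit a second semi-conjugacy class inside the path component of $\rho$, directly contradicting path-rigidity. Concretely, the natural construction is an earthquake-type twist along a simple closed curve $\gamma\subset\Sigma_g$: cut $\Sigma_g$ along $\gamma$ and reglue by composition with a continuous family $r_t\in\HOS$ lying in the centralizer of $\rho(\gamma)$. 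When this centralizer is nontrivial---for instance when $\rho(\gamma)\in\SO(2)$, or $\rho(\gamma)=\id$---the twist produces a path of genuine homomorphisms satisfying the surface relation $\prod_i[\rho_t(a_i),\rho_t(b_i)]=\id$, and for small $t\neq 0$ the inserted rotation shifts the rotation number of some element whose representative crosses $\gamma$.

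The main obstacle I anticipate is ensuring the existence of a suitable curve $\gamma$. If $\rho$ has rich, faithful dynamics, the centralizers of individual group elements are typically discrete and the earthquake produces no nontrivial family. In such subcases one must construct a non-semi-conjugate deformation by more hands-on methods, for instance by perturbing a single generator along a path in the stabilizer of the finite orbit and using the Milnor--Wood bound (Lemma~\ref{max comm}) to verify that the single surface relation can be maintained throughout. Making this step precise in all possible subcases of finite-orbit dynamics is, I expect, the technical heart of the argument; once this is accomplished, the trichotomy together with Proposition~\ref{prop:MinimalConj} finishes the proof.
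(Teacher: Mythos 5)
Your reduction is the same as the paper's: by Propositions~\ref{prop:Trichotomy} and~\ref{prop:MinimalConj}, everything comes down to showing that a representation with a finite orbit cannot be path-rigid. But that is precisely the step your proposal does not actually carry out. The twist/bending construction you describe requires a simple closed curve $\gamma$ whose image $\rho(\gamma)$ admits a suitable nontrivial continuous family in its centralizer, and you concede that you cannot guarantee such a curve exists, deferring the remaining finite-orbit subcases to unspecified ``more hands-on methods.'' Since this is exactly the substantive content of the corollary (the other two cases of the trichotomy are immediate), the proof as proposed has a genuine gap: the claim ``finite orbit implies not path-rigid'' is announced as the technical heart but never established. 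Moreover, even in the favourable subcase you would still owe an argument that the twist changes the rotation number of some element (rotation number can be locally constant along deformations), rather than merely that it produces a nonconstant path of homomorphisms.

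The missing idea, which the paper uses and which disposes of all finite-orbit dynamics at once, is the Alexander trick: if $\rho$ has a finite orbit $F$, one can continuously straighten the action on the complementary intervals of $F$, deforming $\rho$ inside $\Hom(\Gamma_g,\HOS)$ to a representation whose image lies in $\SO(2)$. From there no case analysis is needed: since $\SO(2)$ is abelian the surface relator is automatic, so $\Hom(\Gamma_g,\SO(2))=\SO(2)^{2g}$ is a torus in which the generators' rotation numbers can be varied freely and continuously; as rotation numbers are semi-conjugacy invariants, this path reaches representations not semi-conjugate to $\rho$, contradicting path-rigidity. In short, instead of hunting for a curve with a large centralizer, one first deforms the whole representation into a group where every element has a large centralizer; this is what makes the paper's proof of Corollary~\ref{cor:MinRep} complete in a few lines, and it is the step you would need to add (or replace by a complete treatment of all finite-orbit subcases) to close your argument.
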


\begin{proof}
  Corollary \ref{cor:MinConj} follows immediately from
  Proposition~\ref{prop:MinimalConj}. We now prove Corollary~\ref{cor:MinRep}.
  Using Propositions~\ref{prop:Trichotomy} and~\ref{prop:MinimalConj}, it
  suffices to show that a representation with a finite orbit is not path-rigid.
  If $\rho$ has a finite orbit, then we may perform the Alexander trick to
  continuously deform $\rho$ into a representation with image in $\SO(2)$.
  As $\Hom(\Gamma_g, \SO(2)) = \SO(2)^{2g}$, the representation $\rho$ can be
  deformed arbitrarily within this space, in particular to a non
  semi-conjugate representation.
\end{proof}
Following Corollary \ref{cor:MinRep}, in the proof of Theorem~\ref{thm:Main}
we will occasionally make the (justified) assumption that a path-rigid
representation $\rho$ is also minimal.

\subsection{Deforming actions of surface groups}

Let $\gamma\in\Gamma_g$ be a based, simple loop. Cutting $\Sigma_g$ along
$\gamma$ decomposes $\Gamma_g$ into an amalgamated product
$\Gamma_g=A\ast_{\langle\gamma\rangle}B$ if $\gamma$ is separating, and an
HNN-extension $A\ast_{\langle\gamma\rangle}$ if not.
In both cases, $A$ and $B$ are free groups.
As there is no obstruction to deforming a representations of a free group
into any topological group, deforming a representation
$\rho\colon\Gamma_g\to\HOS$ amounts to deforming the restriction(s) of $\rho$
on $A$ (and $B$, if $\gamma$ separates), subject to the single constraint
that these should agree on $\gamma$.

The following explicit deformations are analogous to special cases of
\emph{bending deformations} from the theory of quasi-Fuchsian and Kleinian
groups.

\begin{definition}(Bending deformations)\label{def:Bending}
  \begin{enumerate}
  \item \textit{Separating curves.} Let $\gamma=c \in \Gamma_g$ represent a
    separating simple closed curve with
    $\Gamma_g = A \ast_{\langle c \rangle} B$.
    Let $c_t$ be a one-parameter group of homeomorphisms commuting with
    $\rho(c)$. Define $\rho_t$ to agree with $\rho$ on $A$, and to be equal
    to $c_t \rho c_t^{-1}$ on $B$.
  \item \textit{Nonseparating curves.} Let $\gamma=a$, and let $b\in\Gamma_g$
    with $i(a,b)= -1$.   Let $c=[a,b]$, writing again
    $\Gamma_g = A \ast_{\langle c \rangle} B$.
    Let $a_t$ be 1-parameter group commuting with $\rho(a)$ and define
    $\rho_t$ to agree with $\rho$ on $B$, and on $\langle a \rangle$,
    and define $\rho_t(b) = a_t\rho(b)$.
  \end{enumerate}
  In both cases, we call this deformation a {\em bending along $\gamma$}.
\end{definition}

In particular, if $\gamma_t$ is a one-parameter group with $\gamma_1=\gamma$,
then the deformation given above is the precomposition of $\rho$ with
${\tau_\gamma}_\ast$, where $\tau_\gamma$ is the Dehn twist along $\gamma$.
Note that we have made a specific (though arbitrary) choice realizing the
Dehn twist as an automorphism of $\Gamma_g$. This will allow us to do
specific computations, for which having a twist defined only up to inner
automorphism would not suffice. (See the discussion on based curves in the
next subsection for more along these lines.)
While not every $f \in \HOS$ embeds in a one-parameter group, every element
with at least one fixed point does, and this is the situation in which we
will typically apply bending deformations in this article.

The next corollary is used frequently in the proof of Theorem~\ref{thm:Main}.

\begin{corollary} \label{minconj}
  Suppose that $\rho$ is a path-rigid, minimal representation.
  Let $\rho_t$ be a bending deformation along $a$, using a deformation $a_t$,
  with $a_1 = a^N$ for some $N \in \Z$. Then $\rho_1$ is conjugate to $\rho$.
\end{corollary}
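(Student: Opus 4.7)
The plan is to reduce the statement directly to Corollary \ref{cor:MinConj}. The paragraph immediately after Definition \ref{def:Bending} identifies the endpoint of a bending deformation with the precomposition of $\rho$ by a power of the Dehn twist $\tau_a$. Concretely, under the nonseparating bending recipe, $\rho_1$ agrees with $\rho$ on $B$ and on $\langle a \rangle$, and on $b$ it satisfies
\[ \rho_1(b) = a_1 \rho(b) = \rho(a)^N \rho(b) = \rho(a^N b) = \rho(\tau_a^N(b)), \]
using the convention that ``$a_1 = a^N$'' is shorthand for $a_1 = \rho(a)^N$. Hence $\rho_1 = \rho \circ \tau_a^N$ for the chosen lift $\tau_a \in \mathrm{Aut}(\Gamma_g)$ of the Dehn twist.

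Two consequences are immediate. First, because $\tau_a^N$ is an automorphism of $\Gamma_g$, we have $\rho_1(\Gamma_g) = \rho(\tau_a^N(\Gamma_g)) = \rho(\Gamma_g)$, so $\rho_1$ and $\rho$ have identical images and identical orbit closures in $S^1$; in particular $\rho_1$ is minimal because $\rho$ is. Second, the family $t \mapsto \rho_t$ is by construction a continuous path in $\Hom(\Gamma_g, \HOS)$, so path-rigidity of $\rho$ places $\rho_1$ in the same semi-conjugacy class as $\rho$.

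With $\rho$ and $\rho_1$ both minimal and mutually semi-conjugate, Corollary \ref{cor:MinConj} upgrades the semi-conjugacy to an honest conjugacy, yielding the conclusion. There is essentially no analytic obstacle in the argument; the only delicate point is the bookkeeping that $\rho_1 = \rho \circ \tau_a^N$ as a map on $\Gamma_g$, which is exactly what makes the ``preservation of minimality'' step work and what makes it crucial that the paper fixed a specific (rather than outer) lift of the Dehn twist to $\mathrm{Aut}(\Gamma_g)$.
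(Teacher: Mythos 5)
Your argument is correct and is essentially the paper's own proof: the endpoint of the bending is $\rho$ precomposed with (a power of) the Dehn twist automorphism, hence has the same image and is minimal, path-rigidity gives semi-conjugacy along the path, and Corollary \ref{cor:MinConj} upgrades this to conjugacy. You have merely spelled out the bookkeeping that the paper leaves implicit.
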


\begin{proof}
  By the discussion above, $\rho_1$ agrees with precomposition of $\rho$
  with an automorphism of $\Gamma_g$, so has the same image.
  Corollary \ref{cor:MinConj} now implies that these are conjugate.
\end{proof} 

\subsection{Based curves, chains, and Fuchsian tori}\label{ssec:Tori}

If $a$ and $b$ are simple closed curves on $\Sigma_g$, the familiar
\emph{geometric intersection number} is the minimum value of $|a' \cap b'|$,
where $a'$ and $b'$ are any curves freely homotopic to $a$ and $b$
respectively. It is well known that if $a$ and $b$ are nonseparating simple
closed curves with geometric intersection number $1$, then there is a
subsurface $T \subset \Sigma$ homeomorphic to a torus with one boundary
component with fundamental group (freely) generated by $a$ and $b$.
(See e.g. \cite{Primer} Section 1.2.3.)

As mentioned earlier, the fact that we are working with specific
representations, rather than conjugacy classes of elements, forces us to take
basepoint and orientation of curves into account. Although our notation
$\Gamma_g = \pi_1\Sigma_g$ does not mention a basepoint, all elements of
$\pi_1\Sigma_g$ will henceforth always be assumed based, and we will use the
following variation on the standard definition of intersection number.

\begin{definition}[Based intersection number]
  Let  $a,b\in \Gamma_g$. We write $i(a,b)=0$ if we can represent $a$ and $b$
  by differentiable maps $a,b\colon[0,1]\to\Sigma_g$, based at the base point,
  whose restrictions to $[0,1)$ are injective, and such that the cyclic order
  of their tangent vectors at the base point is either
  $(a'(0),-a'(1),b'(0),-b'(1))$ or $(a'(0),-a'(1),-b'(1),b'(0))$, or the
  reverse of one of these. \\
  If instead the cyclic order of tangent vectors is
  $(a'(0),b'(0),-a'(1),-b'(1))$ or the reverse, we write $i(a,b)=1$ and
  $i(a,b)=-1$ respectively.
\end{definition}

This is a somewhat ad-hoc definition.
In particular, $i(a,b)$ is left undefined for many pairs $(a, b)$.

For more than two curves, the following definition will be convenient.
\begin{definition}
  A {\em directed $k$-chain}, in $\Sigma_g$, is a $k$-uple
  $(\gamma_1,\ldots,\gamma_k)$ of elements of $\Gamma_g$, such that the
  oriented curves $\gamma_i$ may be homotoped simultaneously (rel. the base
  point) in order to realize an embedding (possibly orientation-reversing,
  but respecting the orientations of the edges) of the graph shown in
  Figure~\ref{fig:Chaine}.
\end{definition}
In particular, $i(\gamma_i,\gamma_j)=\pm 1$ if $|j-i|=1$, and $0$ otherwise.
\begin{figure}[hbt]
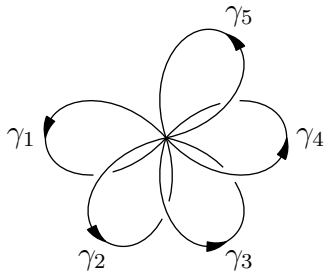

\begin{asy}
  import geometry;
  real t = 48;
  real T=60;
  //
  path courba = (0,0){dir(180-t)}..(-45,0)..(0,0){dir(t)};
  path courbb = rotate(T)*courba, courbc = rotate(T)*courbb;
  path courbd = rotate(T)*courbc, courbe = rotate(T)*courbd;
  //
  point pointab = intersectionpoint(subpath(courba,0.1,1.9),courbb);
  point pointbc = intersectionpoint(subpath(courbb,0.2,1.8),courbc);
  point pointcd = intersectionpoint(subpath(courbc,0.2,1.8),courbd);
  point pointde = intersectionpoint(subpath(courbd,0.2,1.8),courbe);
  //
  draw("$\gamma_1$",courba,Arrow(Relative(0.5))); dot(pointab,7pt+white);
  draw("$\gamma_2$",courbb,Arrow(Relative(0.5))); dot(pointbc,7pt+white);
  draw("$\gamma_3$",courbc,Arrow(Relative(0.5))); dot(pointcd,7pt+white);
  draw("$\gamma_4$",courbd,Arrow(Relative(0.5))); dot(pointde,7pt+white);
  draw("$\gamma_5$",courbe,Arrow(Relative(0.5)));
\end{asy}
\caption{A directed chain of length $5$}
\label{fig:Chaine}
\end{figure}
Note that we do not require that the embedding be $\pi_1$-injective. For
example, whenever $i(\gamma_1,\gamma_2)=1$, then
$(\gamma_1,\gamma_2,\gamma_1^{-1})$ is a directed $3$-chain, rather degenerate.

These $k$-chains will be useful especially to study bending deformations that
realize sequences of Dehn twists.
Whenever $(\gamma_1,\ldots,\gamma_k)$ is a directed $k$-chain,
the Dehn twist along the curve $\gamma_i$ may be described by an automorphism
of $\Gamma_g$ leaving invariant the elements $\gamma_j$ for $|j-i|\geq 2$
and $j=i$, and mapping $\gamma_{i-1}$ to $\gamma_i^{-1}\gamma_{i-1}$, and
 $\gamma_{i+1}$ to $\gamma_{i+1}\gamma_i$.

\begin{notation}
  Let $i(a,b) = \pm1$. Then their commutator $[a,b]$ bounds a genus 1
  subsurface (well defined up to homotopy) containing $a$ and $b$.  We denote 
  this surface by $T(a,b)$.
\end{notation}

\begin{definition} \label{def:StandardF}
  We call any representation $\rho\colon \pi_1(T(a,b)) \to \PSL$ arising
  from a hyperbolic structure of infinite volume on $T(a,b)$ a
  \emph{standard Fuchsian representation of a once-punctured torus}.
  Similarly, we say that $\rho: \Gamma_g \to \PSL$ is
  \emph{standard Fuchsian} if it comes from a hyperbolic structure on
  $\Sigma_g$.
\end{definition}

\begin{convention}
  We assume $\Sigma_g$ is oriented, hence standard Fuchsian representations
  of $\Gamma_g$ have Euler number $-2g+2$, and are all conjugate in $\HOS$.
  Similarly, $T(a,b)$ inherits and orientation, so all its standard Fuchsian
  representations are conjugate in $\HOS$.
\end{convention}

\begin{definition}\label{def:FuchTorus}
  We say that $\rho: \Gamma_g \to \HOS$  has a {\em Fuchsian torus} if there
  exist two simple closed curves $a,b\in\Gamma_g$, with $i(a,b) = \pm1$ and
  such that $\rotild([\rho(a),\rho(b)])= \pm1$.
\end{definition}

The terminology ``Fuchsian torus'' in Definition~\ref{def:FuchTorus} comes
the following observation of Matsumoto.

\begin{observation}[\cite{Matsumoto}]\label{obs:Matsumoto}
  Let $\alpha, \beta\in\HOS$ be such that
  $\rotild([\alpha,\beta])=\pm 1$.
  Then $\alpha$ and $\beta$ generate a free group, and, up to reversing the
  orientation of $S^1$, this group is semi-conjugate to a standard Fuchsian
  representation of a one-holed torus $T(a,b)$ with $\rho(a) = \alpha$ and
  $\rho(b) = \beta$.
\end{observation}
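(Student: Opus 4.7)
The plan is to exploit the extremal nature of the hypothesis: by Lemma \ref{max comm}, the condition $\rotild[\tilde\alpha,\tilde\beta]=\pm 1$ saturates the Milnor--Wood-type bound for commutators, so we expect rigidity to force semi-conjugacy to the unique model achieving the maximum, namely the Fuchsian one. First, I observe that a standard Fuchsian representation $\rho_F$ of $T(a,b)$ does realize this extremum: the commutator $[\rho_F(a),\rho_F(b)]$ represents the boundary of the torus, is hyperbolic, and its preferred lift to $\widetilde{\PSL}$ has translation number $\pm 1$, with sign dictated by the orientation convention. Conjugating by an orientation-reversing homeomorphism of $S^1$ flips the sign of $\rotild$ on commutators, so after this normalization we may assume the sign of $\rotild[\tilde\alpha,\tilde\beta]$ agrees with that of the Fuchsian model.

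By Propositions \ref{prop:Trichotomy} and \ref{prop:MinimalConj}, after replacing $(\alpha,\beta)$ by a semi-conjugate pair we may assume that $\langle\alpha,\beta\rangle$ either acts minimally on $S^1$ or has a finite orbit. The finite-orbit case is easily ruled out: collapsing the complementary arcs yields an action on a finite cyclic group, where every commutator acts trivially, incompatible with $\rotild[\tilde\alpha,\tilde\beta]=\pm 1$. So we may assume the action is minimal. Then $c:=[\alpha,\beta]$ has rotation number $0$ in $\R/\Z$, hence nonempty fixed set $\Fix(c)\subset S^1$, and its lift $\tilde c$ translates the preimage $\tilde F\subset\R$ of $\Fix(c)$ by exactly $\pm 1$. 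This mimics the Fuchsian boundary behavior and anchors the construction of the semi-conjugacy.

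The main step is then to produce a monotone degree-one map $h\colon S^1\to S^1$ satisfying $h\circ\alpha=\rho_F(a)\circ h$ and $h\circ\beta=\rho_F(b)\circ h$. I would build $h$ by sending the cyclically ordered set $\Fix(c)$, together with its orbit under $\langle\alpha,\beta\rangle$, to the corresponding orbit of $\Fix([\rho_F(a),\rho_F(b)])$ under $\langle\rho_F(a),\rho_F(b)\rangle$, then extending monotonically across complementary intervals. The hard part is verifying that the cyclic combinatorics of these fixed points, and the way $\alpha$ and $\beta$ permute them, is forced by the extremality $\rotild\tilde c=\pm 1$: any alternative combinatorial configuration should be rulable out by exhibiting a small deformation that would push $\rotild[\tilde\alpha,\tilde\beta]$ strictly past $\pm 1$, contradicting Lemma \ref{max comm}. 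This combinatorial rigidity is the crux of Matsumoto's original argument in \cite{Matsumoto}. Once $h$ is constructed, freeness of $\langle\alpha,\beta\rangle$ is automatic: any relation $w(\alpha,\beta)=\id$ would give $w(\rho_F(a),\rho_F(b))\circ h=h$, forcing $w(\rho_F(a),\rho_F(b))$ to fix the image of $h$ pointwise, which is all of $S^1$ in the minimal case, contradicting faithfulness of $\rho_F$.
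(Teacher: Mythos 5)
The paper itself does not prove Observation \ref{obs:Matsumoto}: it is quoted from \cite{Matsumoto}, with a pointer to the sketch in \cite[\S~3]{MatsBP}. Your peripheral reductions are fine: normalizing the orientation; passing to a semi-conjugate model using Propositions \ref{prop:Trichotomy} and \ref{prop:MinimalConj} (legitimate, since $\rotild$ of the commutator of lifts is a semi-conjugacy invariant); excluding finite orbits (on the preimage of a finite orbit the lifts $\tilde\alpha,\tilde\beta$ act as shifts of an ordered set isomorphic to $\Z$ commuting with the deck translation, so $[\tilde\alpha,\tilde\beta]$ fixes that preimage pointwise and $\rotild=0$); noting that $\tilde c=[\tilde\alpha,\tilde\beta]$ translates the preimage of $\Fix(c)$ by exactly $\pm1$; and deducing freeness once an equivariant monotone map to the faithful Fuchsian model exists. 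But the entire content of the statement is the step you yourself label ``the hard part'': that $\rotild(\tilde c)=\pm1$ forces the orbit of $\Fix(c)$, and the way $\alpha^{\pm1},\beta^{\pm1}$ permute its complementary intervals, to reproduce the combinatorics of the standard Fuchsian action of the one-holed torus, so that the monotone degree-one map can be defined orbitwise and extended. You defer exactly this to Matsumoto, so as a self-contained argument the proposal has a genuine gap at its core.

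Moreover, the mechanism you suggest for closing that gap cannot work as stated. You propose to rule out a non-Fuchsian configuration by exhibiting a small deformation pushing $\rotild[\tilde\alpha_t,\tilde\beta_t]$ strictly past $\pm1$, ``contradicting Lemma \ref{max comm}''. But Lemma \ref{max comm} is a universal bound: \emph{every} pair in $\HOS$ satisfies $|\rotild[\tilde f,\tilde g]|\leq 1$, so no deformation of any pair can ever exceed it, and hence no configuration can be eliminated this way -- the implication ``bad configuration $\Rightarrow$ one can perturb to $|\rotild|>1$'' is false whenever the bad configuration occurs at all. Note also that, unlike the deformation arguments elsewhere in this paper, there is no path-rigidity hypothesis in the Observation to contradict: $\alpha,\beta$ are arbitrary. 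The correct equality-case analysis runs the other way: one shows directly that any arrangement other than the Fuchsian ping-pong pattern forces $\rotild[\tilde\alpha,\tilde\beta]$ to differ from $\pm1$, contradicting the \emph{hypothesis} rather than the bound; this is what Matsumoto's basic-partition argument does, by analyzing where $\tilde\alpha^{\pm1},\tilde\beta^{\pm1}$ send a fundamental interval for $\tilde c$ on the set where $\tilde c(x)=x\pm1$. Until that analysis is carried out, your proposal establishes only the easy reductions, which is no more than the paper itself, which treats the Observation as a cited result.
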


The proof of Observation \ref{obs:Matsumoto} not difficult, an easily
readable sketch is given in~\cite[\S~3]{MatsBP}.

The next lemma shows the existence of such a torus is guaranteed, provided the
absolute value of the Euler number of a representation is sufficiently high.

\begin{lemma}\label{Fuchsian lem}
  If $|\eu(\rho)| \geq g$ then $\rho$ has a Fuchsian torus.
\end{lemma}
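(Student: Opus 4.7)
The approach I'd take is to work with the standard presentation $\Gamma_g = \langle a_1, b_1, \ldots, a_g, b_g \mid \prod_i [a_i, b_i] \rangle$ from Proposition~\ref{prop:DefMilnor} and argue by contradiction. Set $c_i := [\widetilde{\rho(a_i)}, \widetilde{\rho(b_i)}] \in \HOZ$ and $\tau_i := \rotild(c_i)$. By Lemma~\ref{max comm}, $\tau_i \in [-1,1]$, and by Proposition~\ref{prop:DefMilnor} the product $c_1 c_2 \cdots c_g$ is exactly the central integer translation $x \mapsto x + \eu(\rho)$ in $\HOZ$. Since each standard pair $(a_i,b_i)$ has based intersection number $\pm 1$, Definition~\ref{def:FuchTorus} says that $(a_i,b_i)$ gives a Fuchsian torus exactly when $|\tau_i| = 1$. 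It therefore suffices to prove the contrapositive: if $|\tau_i| < 1$ for every $i$, then $|\eu(\rho)| < g$.

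The main tool I would use is an exact additivity coming from centrality of the product. For any $1 \leq k \leq g-1$, setting $P := c_1 \cdots c_k$ and $Q := c_{k+1} \cdots c_g$, the relation $PQ(x) = x + \eu(\rho)$ together with the fact that integer translations lie in the center of $\HOZ$ yields $\rotild(Q) = \eu(\rho) - \rotild(P)$, whence
\[
\eu(\rho) = \rotild(c_1 \cdots c_k) + \rotild(c_{k+1} \cdots c_g).
\]
Geometrically this is additivity of the relative Euler class across the separating simple closed curve $\delta_k$ bounding a genus-$k$ subsurface of $\Sigma_g$. Combined with Lemma~\ref{max comm}, this reduces the problem to the following inductive claim on surfaces with boundary: for any representation of $\pi_1 \Sigma_{h,1}$ (genus $h$, one boundary) whose standard handle generators $(a_i,b_i)$ all satisfy $|\rotild([\widetilde{\rho(a_i)},\widetilde{\rho(b_i)}])| < 1$, one has $|\rotild(c_1 \cdots c_h)| < h$. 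The base case $h=1$ is precisely the strict form of Lemma~\ref{max comm}.

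The hardest part will be the inductive step. Cutting $\Sigma_{h,1}$ along $\delta_1$ peels off the one-holed torus $T(a_1,b_1)$ but leaves a twice-punctured subsurface of genus $h-1$, to which the induction hypothesis (formulated for one-holed surfaces) does not directly apply. The naive route via the quasi-morphism estimate $|\rotild(c_1(c_2 \cdots c_h)) - \tau_1 - \rotild(c_2 \cdots c_h)| \leq 1$ yields only $|\rotild(c_1 \cdots c_h)| < 2h-1$, matching Milnor--Wood but far short of the required $<h$. To close this gap I would seek an exact (rather than approximate) additivity formula on the residual twice-boundary piece, chosen so that boundary lifts cancel and the quasi-morphism defect vanishes instead of merely being bounded by $1$.

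If the direct inductive argument proves too delicate, a fallback is a deformation-theoretic argument based on Observation~\ref{obs:Matsumoto}. The strict inequalities $|\tau_i|<1$ mean that no pair $(\rho(a_i),\rho(b_i))$ is semi-conjugate to a standard Fuchsian torus representation, and one would hope that each pair $(\widetilde{\rho(a_i)},\widetilde{\rho(b_i)})$ can then be connected in $\HOZ^2$ to $(\id,\id)$ along a path on which $\rotild$ of the corresponding commutator stays strictly inside $(-1,1)$. Gluing these deformations into a continuous path in $\Hom(\Gamma_g,\HOS)$ from $\rho$ to the trivial representation would contradict the homotopy invariance and integrality of $\eu$ together with $|\eu(\rho)| \geq g \geq 2$. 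The technical difficulty would be performing the individual deformations consistently with the global constraint that the product $c_1 \cdots c_g$ remain an integer translation throughout.
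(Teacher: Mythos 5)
There is a genuine gap: your reduction is fine (the contrapositive ``if $|\tau_i|<1$ for all $i$ then $|\eu(\rho)|<g$'' is exactly what must be shown, and your splitting $\eu(\rho)=\rotild(c_1\cdots c_k)+\rotild(c_{k+1}\cdots c_g)$ via centrality of integer translations is correct), but the key estimate is never established. Your inductive claim $|\rotild(c_1\cdots c_h)|<h$ is left open: as you yourself note, the quasimorphism defect bound only gives $|\rotild(c_1\cdots c_h)|<2h-1$, and the ``exact additivity on the twice-boundary piece'' you hope for is not identified, so the proof does not close. The missing idea is elementary and does not require any induction over subsurfaces: for $f\in\HOZ$ one has $\rotild(f)>0$ if and only if $f(x)>x$ for all $x\in\R$, and the pointwise condition $f(x)>x$ is obviously preserved under composition. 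Hence if $\rotild(f_i)>-1$ for every $i$, then multiplying each $f_i$ by the central translation by $+1$ and composing gives $\rotild(f_1\cdots f_g)>-g$ directly; applying this (and its mirror image, or after conjugating by an orientation-reversing homeomorphism) to $f_i=c_i$ shows that $|\eu(\rho)|\geq g$ forces $\rotild(c_i)=\pm 1$ for some $i$ by Lemma~\ref{max comm}, which is the Fuchsian torus. In other words, rotation number is not merely a quasimorphism with defect $1$; strict one-sided bounds of the form $\rotild>-1$ compose with no loss, and this is the whole proof.

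Your fallback deformation argument is also not viable as sketched: deforming each handle pair $(\widetilde{\rho(a_i)},\widetilde{\rho(b_i)})$ to $(\id,\id)$ separately destroys the relation $\prod_i[a_i,b_i]\mapsto\id$, and the obstruction to restoring it while following such a path is precisely the Euler number. Indeed, representations with all $|\tau_i|<1$ can have nonzero Euler number (up to $g-1$), so no general gluing principle can connect such a representation to the trivial one; the global constraint you flag as a ``technical difficulty'' is in fact the entire content of the statement, so this route is circular rather than merely delicate.
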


\begin{proof}
  If $\eu(\rho) \geq g$, then conjugating $\rho$ by an orientation-reversing
  homeomorphism of $S^1$ gives a representation with Euler number at most $-g$.
  Thus, we may assume that $\eu(\rho) \leq -g$.
  Let $f\in\HOZ$. It is an easy consequence of the definition of $\rotild$
  that $\rotild(f)>0$ if and only if $f(x) > x$ for all $ x\in\R$.
  Hence if $f_1,\ldots,f_g\in\HOZ$ satisfy $\rotild(f_i)>0$
  for all $i$, then $\rotild(f_1\cdots f_g)>0$.
  
  By composing such $f_i$ by translation by $-1$, which is central in $\HOZ$,
  we deduce that if
  $\rotild(f_i)>-1$ for all $i$ then $\rotild(f_1\cdots f_g)>-g$.
  Now let $\rho$ be a representation, and let
  $f_i = [ \widetilde{\rho(a_i)}, \widetilde{\rho(b_i)}]$.
  Then the inequality $\eu(\rho)\leq -g$ implies
  $\rotild(f_i)\leq -1$ for some $i$.
  As the maximum absolute value of the rotation number of a commutator is $1$
  by Lemma \ref{max comm}, we in fact have $\rotild(f_i) = -1$ for some $i$.
\end{proof}

Lemma \ref{Fuchsian lem} immediately shows that Theorem~\ref{thm:Main}
implies Theorem~\ref{thm:AuMoinsG}.
The rest of this work is devoted to the proof of Theorem~\ref{thm:Main}.

\section{Step 1: Existence of hyperbolic elements}\label{hypsec}

\begin{definition}\label{hyp def}
  We say a homeomorphism $f \in\HOS$ is {\em hyperbolic} if it is conjugate to
  a hyperbolic element of $\PSL$, i.e. it has one \emph{attracting fixed point}
  $f_+ \in S^1$ and one \emph{repelling fixed point} $f_- \neq f_+$ such that
  $\lim \limits_{n \to +\infty} f^n(x) = f_+$ for all $x \neq f_-$, and
  $\lim \limits_{n \to +\infty} f^{-n}(x) = f_-$ for all $x \neq f_+$.
\end{definition}

The first step of the proof of Theorem \ref{thm:Main} is to show that a rigid, minimal representation has very many hyperbolic elements.  This is the goal of this section.

\begin{lemma}\label{lemKt}
  Let $T(a,b)$ be a one-holed torus subsurface, and let $A = \pi_1T(a,b)$.
  Suppose $\rho\colon A \to \HOS$ is semi-conjugate to a standard Fuchsian
  representation, as in definition \ref{def:StandardF}.
  Then there exists a continuous deformation $\rho_t$ with $\rho_0 = \rho$
  such that
  \begin{enumerate}[i)]
  \item  $\rho_1(a)$ is hyperbolic, and 
  \item there exists a continuous family of homeomorphisms $f_t \in \HOS$
    such that $\rho_t([a,b]) = f_t \rho([a,b]) f_t^{-1}$ for all $t$.
  \end{enumerate}
\end{lemma}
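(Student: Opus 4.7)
By hypothesis there is a monotone degree-one map $h\colon S^1\to S^1$ with $h\circ\rho(\gamma) = \rho_F(\gamma)\circ h$ for all $\gamma\in A = \pi_1 T(a,b)$, where $\rho_F$ is standard Fuchsian. Let $p_+, p_-$ be the fixed points of the hyperbolic element $\rho_F(a)$, and set $I_\pm := h^{-1}(p_\pm)$; these are closed arcs of $S^1$. Then $\rho(a)$ preserves each $I_\pm$ setwise and acts freely on the complement, with orbits accumulating at $I_\pm$ in forward and backward time respectively. If both $I_+$ and $I_-$ are single points, then $\rho(a)$ is already hyperbolic and the constant deformation suffices, so I assume at least one is nondegenerate.

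The plan is to ``pinch'' $I_+$ and $I_-$ to single interior points $q_+ \in I_+$ and $q_- \in I_-$, producing a target representation $\rho_1$ with $\rho_1(a)$ hyperbolic with fixed points $q_\pm$. Concretely, let $k\colon S^1\to S^1$ be the monotone degree-one map that is the identity on $S^1\setminus(I_+\cup I_-)$ and collapses $I_\pm$ to $q_\pm$. Define $\rho_1$ by requiring $k\circ\rho(\gamma) = \rho_1(\gamma)\circ k$; this is well-defined because $\rho(a)$ preserves $I_\pm$, and to handle the other generators one simultaneously collapses the full $\rho(A)$-orbit of $I_\pm$, a countable family of pairwise disjoint arcs. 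Generically, this orbit is disjoint from the fixed arcs $h^{-1}(\Fix \rho_F([a,b]))$ of $\rho([a,b])$, so $k$ is the identity on those arcs. Consequently $\rho_1([a,b])$ has the same fixed set as $\rho([a,b])$ with the same attracting/repelling pattern on its complement, and is therefore conjugate to $\rho([a,b])$ in $\HOS$ by the classification of orientation-preserving circle homeomorphisms by dynamical type.

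To join $\rho$ to $\rho_1$ by a continuous path, I appeal to the general principle that semi-conjugate representations lie in the same path-component of $\Hom(A,\HOS)$. An explicit construction shrinks the collapsed arcs uniformly by a factor depending continuously on $t\in[0,1]$, interpolating between the identity ($t=0$) and the collapse map $k$ ($t=1$). The invariance of the commutator's conjugacy class established above allows one to choose the conjugating homeomorphisms $f_t\in\HOS$ continuously.

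The main obstacle I anticipate is ensuring continuity of the deformation at $t=1$, where the interpolating maps pass from homeomorphisms to the non-invertible map $k$; one must verify uniform convergence of $\rho_t(\gamma)$ to $\rho_1(\gamma)$ in $\HOS$. A clean way to handle this may be to insert a preliminary deformation supported in $I_\pm$ (which preserves $\rho([a,b])$ literally, since modifying $\rho(a)$ on its own fixed arcs does not disturb the dynamics of $[a,b]$ on the complement) that first simplifies $\rho(a)|_{I_\pm}$, for example to the identity on each arc. A secondary difficulty is the degenerate case where the $\rho_F$-orbit of some $p_\pm$ contains a fixed point of $\rho_F([a,b])$, in which the naive collapse alters the commutator's conjugacy class and one must instead pinch a carefully chosen subfamily of arcs.
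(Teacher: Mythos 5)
Your plan is essentially the paper's: equivariantly collapse the countable, $\rho(A)$-invariant family of arcs lying over the orbit of the fixed points of $a$ in the Fuchsian model, interpolating the collapse by homeomorphisms so that $\rho_t=h_t\rho h_t^{-1}$ for $t<1$. But the points you leave open are exactly the content of the lemma, so as written there are genuine gaps. First, the disjointness of the collapsed family from $\Fix(\rho([a,b]))$ is not a ``generic'' matter to be hedged and patched later: it is what makes conclusion (ii) possible, and it must be proved. It in fact holds always: no point of the orbit $\rho_F(A)p_\pm$ can be fixed by $\rho_F([a,b])$, since two hyperbolic elements of a discrete subgroup of $\PSL$ cannot share exactly one fixed point and $[a,b]$ is not conjugate into a conjugate of $\langle a\rangle$; indeed the whole family of arcs lies in a single complementary interval $J$ of $\Fix(\rho([a,b]))$, which is precisely what the paper's argument uses. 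Your proposed fallback for the ``degenerate case'' (pinching a subfamily) cannot be left as an escape hatch: a non-invariant subfamily destroys equivariance, and omitting $I_\pm$ destroys hyperbolicity of $\rho_1(a)$.

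Second, conclusion (ii) asks for a \emph{continuous} family $f_t$ with $\rho_t([a,b])=f_t\,\rho([a,b])\,f_t^{-1}$ up to and including $t=1$. For $t<1$ one may take $f_t=h_t$, but $h_t$ does not converge to a homeomorphism, and knowing merely that $\rho_1([a,b])$ is conjugate to $\rho([a,b])$ does not produce a continuous choice of conjugators. The paper handles this by making $h_t$ the identity outside $J$ and rescaling lengths inside $J$ (note that your map $k$, described as the identity off $I_+\cup I_-$ while collapsing those arcs, cannot be a monotone degree-one circle map), so that $\rho_t([a,b])$ coincides with $\rho([a,b])$ off $J$ and is fixed-point free on $J$ for every $t\in[0,1]$; one then chooses $f_t$ supported in $J$ by the explicit fundamental-domain/affine construction of Lemma~\ref{lem:ConjFamily}. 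Relatedly, your appeal to the ``general principle'' that semi-conjugate representations lie in the same path component is both unavailable here (it is close to what this circle of papers is proving) and insufficient, since an arbitrary path in a semi-conjugacy class gives no control on the conjugacy class of the image of $[a,b]$ along the way. Finally, the convergence $\rho_t(g)\to\rho_1(g)$ as $t\to1$, which you flag as the main obstacle, must actually be verified; with the explicit length-rescaling definition of $h_t$ it follows because $\rho(g)$ permutes the collapsed arcs and their $h_t$-lengths tend to zero uniformly, whereas your suggested preliminary modification of $\rho(a)$ on $I_\pm$ alone does not address the other generators.
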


\begin{proof}
Let $c$ denote the commutator $[a,b]$.
Let $\rho_0$ denote the minimal representation (unique up to conjugacy) that is semi-conjugate to $\rho$.  Since $\rho$ is semi-conjugate to a standard Fuchsian representation, $\rho_0$ is the representation corresponding to the \emph{finite volume} hyperbolic structure on $T(a,b)$. 
By Observation \ref{obs:Matsumoto} and Proposition \ref{prop:MinimalConj}, there is a continuous map $h:S^1 \to S^1$, collapsing each component of the exceptional minimal set for $\rho$ to a point, satisfying  $h \rho = \rho_0 h$.  Let $x_+$ and $x_-$ be the endpoints of the axis of $\rho_0(a)$, and $X_+$ and $X_-$ the pre-images under $h$ of their orbits $\rho(A)x_+$ and $\rho(A)x_-$.

Note that $X_+$ and $X_-$ are both $\rho(A)$-invariant sets and their images under $h$ are the attractors (respectively, repellers) of closed curves in $T(a,b)$ conjugate to $a$.  Moreover, for this reason, $X_+$ and $X_-$ lie in a single connected component of $S^1 \smallsetminus \Fix(\rho(c))$, and the interiors of the intervals that make up $X_+$ and $X_-$ are disjoint from the exceptional minimal set of $\rho$.    

Define a continuous family of continuous maps $h_t: S^1 \to S^1$, with $h_0 = \id$, as follows:  
We define $h_t$ to be the identity on the complement of the connected component of $S^1 \smallsetminus \Fix(\rho(c))$ containing $X_+$ and $X_-$, and for each interval $I$ of $X_+$ or of $X_-$, have $h_t$ be a homotopy contracting that interval so that $h_1(I)$ is a point.    

To make this precise, one needs to fix an identification of the target of $h_t$ with the standard unit circle.  Let $J$ be the connected component of $S^1 \smallsetminus \Fix(\rho(c))$ that contains the exceptional minimal set of $\rho(A)$.  Define $h_t$ to rescale the length of each connected component of $X_+$ or $X_-$ by a factor of $(1-t)$ and rescale the complement of $X_+ \cup X_-$ in $J$ so that the total length of $J$ remains unchanged.  This gives us the desired map $h_t$ which is the identity outside of $J$, and contracts intervals of $X_+$ and $X_-$ to points.    

Now define $\rho_t$ by $h_t \rho(g) h_t^{-1}= \rho_t(g)$ for $t \in [0,1)$.  
We claim that there is a unique $\rho_1(g)$ satisfying  $h_1 \rho(g) = \rho_1(g) h_1$.  Indeed, $\rho(g)$ permutes the complementary intervals of the exceptional minimal set for $\rho$, so letting $h_1^{-1}(x)$ denote the pre-image of $x$ by $h_1$ (which is either a point or an open interval complementary to the exceptional minimal set), $h_1 \rho(g) h_1^{-1}(x)$ is always a single point, and $h_1 \rho(g) h_1^{-1}$ defines in this way a homeomorphism, which we denote by $\rho_1(g)$.  It is easily verified that $\rho_t(g)$ approaches $\rho_1(g)$ as $t \to 1$.   By construction, $\rho_1(a)$ is hyperbolic, and $\rho_t(c)$ is conjugate to a translation on the interval $J$ defined above (and hence its restriction to $J$ is conjugate to $\rho(c)|_J$), and $\rho_t(c)$ restricted to $S^1 \smallsetminus J$ agrees with $\rho_0(c)$.  
Let $f_t: S^1 \to S^1$ be a continuous family of homeomorphisms supported on $J$ that conjugate the action of $\rho_t([a,b])$ to the action of $\rho(c)$ there.  (For the benefit of the reader, justification of this step via a simple construction of such a family is given in Lemma \ref{lem:ConjFamily} below.)  Then $\rho_t(c) = f_t \rho(c) f_t^{-1}$, as claimed.
\end{proof}  

\begin{lemma}\label{lem:ConjFamily}
  Let $g_t$ be a continuous family (though not necessarily a subgroup) of
  homeomorphisms of an open interval $I$, with $\Fix(g_t) \cap I = \emptyset$
  for all $t \in [0,1]$. Then there exists a continuous family of
  homeomorphisms $f_t$ such that $f_t g_t f_t^{-1} = g_0$ for all~$t$.
\end{lemma}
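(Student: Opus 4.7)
The strategy is the standard fundamental-domain construction for conjugating free $\Z$-actions on an interval, made to depend continuously on $t$. First I would argue that the direction of motion is constant in $t$. Since $\Fix(g_t)\cap I=\emptyset$ and $I$ is connected, $g_t(x)-x$ has constant sign in $x$ for each $t$; moreover the function $(t,x)\mapsto g_t(x)-x$ is continuous on $[0,1]\times I$ and never zero, so its sign is also constant in $t$. After possibly replacing all $g_t$ by $g_t^{-1}$ (and hence $f_t$ by $f_t^{-1}$ at the end) I may therefore assume $g_t(x)>x$ for all $x\in I$ and all $t\in[0,1]$.

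Next I fix a basepoint $x_0\in I$ and set $a_n(t):=g_t^n(x_0)$ for $n\in\Z$; each $a_n$ depends continuously on $t$, and for every fixed $t$ the sequence $(a_n(t))_{n\in\Z}$ is strictly increasing and exhausts $I$ (it accumulates only at the endpoints of $I$, since any accumulation point in $I$ would be a fixed point of $g_t$). Thus $[a_0(t),a_1(t)]$ is a fundamental domain for the action of $\langle g_t\rangle$. I define $f_t$ on this fundamental domain to be the unique orientation-preserving affine bijection onto $[a_0(0),a_1(0)]$, namely
\[
f_t(y) \;=\; x_0 + (y-x_0)\,\frac{g_0(x_0)-x_0}{g_t(x_0)-x_0},\qquad y\in[a_0(t),a_1(t)],
\]
and then extend $f_t$ to all of $I$ by equivariance: for $z\in[a_n(t),a_{n+1}(t)]$, set
\[
f_t(z) \;:=\; g_0^n\bigl(f_t(g_t^{-n}(z))\bigr).
\]
The two definitions agree on the overlaps $\{a_n(t)\}$, since $f_t(a_0(t))=x_0$ and $f_t(a_1(t))=g_0(x_0)$ by the affine formula. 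Hence $f_t$ is a well-defined, strictly increasing bijection from $I$ onto $I$, and the relation $f_t\circ g_t=g_0\circ f_t$ holds by construction. At $t=0$ the affine map is the identity on $[a_0(0),a_1(0)]$, so equivariance forces $f_0=\id_I$.

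The only remaining point is continuity of $t\mapsto f_t$ in, say, the compact-open topology. Fix a compact subinterval $K\subset I$. Because $a_n(t)\to\partial I$ as $n\to\pm\infty$ uniformly in $t$ on $[0,1]$ (the endpoints depend continuously on $t$ and never lie in $I$), there exists $N$ such that $K\subset[a_{-N}(t),a_N(t)]$ for every $t\in[0,1]$. On $K$ the map $f_t$ is obtained from the affine model on the fundamental domain by at most $N$ iterations of $g_t^{-1}$ and $g_0$, all of which are jointly continuous in $(t,y)$. This gives uniform continuity on $K$, hence continuity of $t\mapsto f_t$ globally. The only real subtlety is this last continuity check, but the uniform control on the orbits $a_n(t)$ described above makes it routine; there is no serious obstacle.
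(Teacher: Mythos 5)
Your proof is correct and follows essentially the same route as the paper: fix a basepoint, use $[x_0,g_t(x_0)]$ as a fundamental domain, take the affine identification of fundamental domains, and extend equivariantly (you conjugate $g_t$ to $g_0$ where the paper writes the conjugacy in the other direction, and you additionally spell out the sign and continuity checks the paper leaves implicit).
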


\begin{proof}
  Fix $x$ in the interior of $I$, and let $D_t := [x, g_t(x)]$ be a
  fundamental domain for the action of $g_t$. Define the restriction of $f_t$
  to $D_0$ be the (unique) affine homeomorphism $D_0 \to D_t$, and extend
  $f_t$ equivariantly to give a homeomorphism of~$I$.
\end{proof}

\begin{corollary}\label{Kt}
  Let $\rho: \Gamma_g \to \HOS$. Suppose that $a$ and $b$ are simple closed
  curves in $\Gamma_g$ with $i(a,b)=\pm 1$ and
  $\rotild([\widetilde{\rho(a)},\widetilde{\rho(b)}])=\pm 1$.
  Then there exists a deformation $\rho'$ of $\rho$ such that $\rho'(a)$ is
  hyperbolic. If additionally $\rho$ is assumed path-rigid and minimal,
  then $\rho(a)$ is hyperbolic.
\end{corollary}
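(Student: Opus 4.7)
First, by Observation~\ref{obs:Matsumoto}, the hypotheses $i(a,b)=\pm 1$ and $\rotild([\widetilde{\rho(a)},\widetilde{\rho(b)}])=\pm 1$ imply that the restriction of $\rho$ to $A:=\pi_1 T(a,b)=\langle a,b\rangle$ is semi-conjugate (possibly after reversing the orientation of $S^1$) to a standard Fuchsian representation of a one-holed torus. Applying Lemma~\ref{lemKt} to $\rho|_A$ produces a continuous deformation $(\rho^A_t)_{t\in[0,1]}$ with $\rho^A_0=\rho|_A$ and $\rho^A_1(a)$ hyperbolic, together with a continuous family $(f_t)_{t\in[0,1]}$ in $\HOS$ with $f_0=\id$ and $\rho^A_t([a,b])=f_t\rho([a,b])f_t^{-1}$ for every $t$.

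The next step is to promote this to a deformation of $\rho$ on all of $\Gamma_g$. Since $c:=[a,b]$ bounds the subsurface $T(a,b)$, it is a separating simple closed curve, so $\Gamma_g=A\ast_{\langle c\rangle}B$ with $B$ the free fundamental group of the complementary subsurface. Define $\rho_t\colon\Gamma_g\to\HOS$ by $\rho_t|_A:=\rho^A_t$ and $\rho_t|_B(\gamma):=f_t\rho(\gamma)f_t^{-1}$. The two definitions agree on $\langle c\rangle$ precisely because $\rho^A_t(c)=f_t\rho(c)f_t^{-1}$; the family is continuous in $t$, and $\rho_0=\rho$ since $f_0=\id$. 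Setting $\rho':=\rho_1$ yields the required deformation with $\rho'(a)$ hyperbolic, establishing the first assertion. Structurally this is a variant of the separating-curve bending deformation of Definition~\ref{def:Bending}, except that the restriction to $A$ is itself deformed.

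For the additional claim, assume $\rho$ is path-rigid and minimal. Path-rigidity gives that $\rho'$ is semi-conjugate to $\rho$, and Proposition~\ref{prop:MinimalConj} combined with minimality of $\rho$ produces a continuous monotone degree-one map $h\colon S^1\to S^1$ with $h\circ\rho'(\gamma)=\rho(\gamma)\circ h$ for every $\gamma\in\Gamma_g$. If $p_+,p_-$ denote the attracting and repelling fixed points of $\rho'(a)$, then $h(p_\pm)$ are fixed by $\rho(a)$, and the hyperbolic dynamics of $\rho'(a)$ propagates through $h$: every $\rho(a)$-orbit other than that of $h(p_-)$ converges in forward time to $h(p_+)$, and symmetrically in backward time. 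Hence, as soon as $h(p_+)\ne h(p_-)$, the element $\rho(a)$ has exactly two fixed points with hyperbolic dynamics, so is itself hyperbolic. The main point requiring genuine care is precisely this inequality: one must rule out that $h$ collapses a $\rho'(a)$-invariant closed arc containing both $p_\pm$, which would make $\rho(a)$ parabolic rather than hyperbolic. I expect to derive this from the minimality of $\rho$ together with the very specific local structure of the collapse map $h_t$ and the family $f_t$ of Lemma~\ref{lemKt}, which are supported only on the arc $J$ determined by the axis of the Fuchsian element corresponding to $a$; minimality of $\rho$ should forbid a collapsed arc on this scale.
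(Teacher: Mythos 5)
Your first assertion is proved exactly as in the paper: restrict to $A=\langle a,b\rangle$, apply Lemma~\ref{lemKt}, and extend over $B$ by conjugating by the family $f_t$, exactly the paper's construction (the normalization $f_0=\id$ is harmless and implicit there as well). The problem is the second assertion, where you have an explicitly acknowledged but genuine gap: you never prove $h(p_+)\neq h(p_-)$, i.e.\ you never rule out that the continuous semi-conjugacy $h$ (with $h\circ\rho'=\rho\circ h$) collapses an arc joining the two fixed points of $\rho'(a)$, leaving $\rho(a)$ with a single fixed point and parabolic-type dynamics. Your proposed fix --- using ``the very specific local structure of the collapse map $h_t$ and the family $f_t$'' together with minimality --- cannot work as stated: path-rigidity only tells you that the endpoint $\rho'$ is semi-conjugate to $\rho$, and the map $h$ furnished by Proposition~\ref{prop:MinimalConj} is an abstract semi-conjugacy between these two representations; it has no relation whatsoever to the maps $h_t,f_t$ used in building the path. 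Moreover, minimality of $\rho$ by itself does not forbid $h$ from collapsing arcs: collapsed arcs merely reflect non-minimality of $\rho'$, which is entirely possible after the deformation.

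The paper closes exactly this gap by re-using the hypothesis $\rotild([\widetilde{\rho(a)},\widetilde{\rho(b)}])=\pm1$, which your sketch uses only once (to invoke Lemma~\ref{lemKt}). Namely, from $\Fix(\rho(a))=h\bigl(\Fix(\rho'(a))\bigr)$ one gets that $\rho(a)$ has at most two fixed points; if its dynamics were not hyperbolic, it would admit a lift with $|x-\widetilde{\rho(a)}(x)|\leq 1$ for all $x$, and Matsumoto's computation then forces $|\rotild([\widetilde{\rho(a)},\widetilde{\rho(b)}])|<1$, contradicting the hypothesis. Some second use of the Fuchsian-torus hypothesis is unavoidable: an alternative (closer in spirit to what you attempted) would be to use the continuous, monotone, degree-one semi-conjugacy from $\rho|_A$ to the standard Fuchsian model coming from Observation~\ref{obs:Matsumoto}, whose surjectivity gives disjoint $\rho(a)$-invariant intervals over the two fixed points of the Fuchsian image of $a$, hence at least two fixed points for $\rho(a)$; but as written your argument does not contain this (or any) mechanism excluding the collapse, so the second claim of Corollary~\ref{Kt} is not established.
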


\begin{proof}
Let $A$ denote the subgroup generated by $a$ and $b$ and let $c = [a,b]$, so $\Gamma_g = A \ast_{\langle c \rangle} B$.  
Let $\bar{\rho}$ denote the restriction of $\rho$ to $A$.  By Lemma \ref{lemKt}, there exists a family of representations $\bar{\rho}_t: A \to \HOS$ such that 
$\bar{\rho}_t(c) = f_t \bar{\rho}(c) f_t^{-1}$ for some continuous family $f_t \in \HOS$, and such that  $\bar{\rho}_1(a)$ is hyperbolic.  As in the bending construction, define a deformation of $\rho$ by 
$$ \rho_t(\gamma) = 
\left\{ \begin{array}{ll} 
\bar{\rho}_t(\gamma) &\text{ for } \gamma \in A \\
f_t \rho(\gamma) f_t^{-1} &\text{ for } \gamma \in B.
\end{array} \right.
$$
By construction, $\rho_t$ is a well defined representation, and $\rho_1(a) = \bar{\rho}_1(a)$ is hyperbolic.

If $\rho$ is assumed path-rigid, then this deformation $\rho'$ is semi-conjugate to $\rho$.  If $\rho$ is additionally known to be minimal, then there is a continuous map $h$ satisfying $h \circ \rho' = \rho \circ h$.  In particular, this implies that $\Fix(\rho(a)) = h \Fix(\rho'(a))$, so $\rho(a)$ has at most two fixed points.  
In this case, if $\rho(a)$ does not have hyperbolic dynamics then it has a lift to $\HOZ$ satisfying $|x - \widetilde{\rho(a)}(x)| \leq 1$ for all $x$.   But this easily implies that $| \rotild([\widetilde{\rho(a)},\widetilde{\rho(b)}]) | <  1$.  (The reader may verify this as an exercise, or see the proof of Theorem~2.2 in \cite{Matsumoto} where this computation is carried out.)   We conclude that $\rho(a)$ must be hyperbolic when $\rho$ is path-rigid and minimal.  
\end{proof}  

Having found one hyperbolic element, our next goal is to produce many others.
An important tool here, and in what follows, is the following basic
observation on dynamics of circle homeomorphisms.

\begin{observation}\label{obs:Punch}
  Let $f \in \HOS$ be hyperbolic, with attracting point $f_+$ and repelling
  point $f_{-}$, and let $g \in \HOS$. For any neighborhoods $U_-$ and $U_+$ of
  $f_-$ and $f_+$ respectively, and any neighborhoods $V_-$ and $V_+$ of $g^{-1}(f_-)$ and
  $g(f_+)$ respectively, there exists $N \in \N$ such that
  $$f^N g (S^1 \smallsetminus V_-) \subset U_+ \text{ and }
  g f^N (S^1 \smallsetminus U_-) \subset V_+.$$
\end{observation}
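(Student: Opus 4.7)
The plan is to reduce both inclusions to the standard ``north--south'' dynamics of a hyperbolic circle homeomorphism: from Definition~\ref{hyp def}, for any neighborhood $U$ of $f_+$ and any compact set $K \subset S^1 \smallsetminus \{f_-\}$, there is some $N_0$ with $f^n(K) \subset U$ for all $n \geq N_0$ (and symmetrically for backwards iteration with roles of $f_+$ and $f_-$ exchanged). This follows immediately by covering $K$ with finitely many points, applying the pointwise convergence $f^n(x) \to f_+$, and using continuity of $f^{N_0}$ to pass from points to small open sets.

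For the first inclusion, I would use that $g$ is a homeomorphism to rewrite $g(S^1 \smallsetminus V_-) = S^1 \smallsetminus g(V_-)$. Since $V_-$ is a neighborhood of $g^{-1}(f_-)$, the image $g(V_-)$ is a neighborhood of $f_-$, so the compact set $K_1 := S^1 \smallsetminus g(V_-)$ lies in $S^1 \smallsetminus \{f_-\}$. Applying north--south dynamics, there exists $N_1$ such that $f^n(K_1) \subset U_+$ for all $n \geq N_1$, which is precisely $f^n g (S^1 \smallsetminus V_-) \subset U_+$.

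For the second inclusion, the compact set $K_2 := S^1 \smallsetminus U_-$ is disjoint from $f_-$, and $g^{-1}(V_+)$ is a neighborhood of $g^{-1}(g(f_+)) = f_+$. North--south dynamics again gives $N_2$ such that $f^n(K_2) \subset g^{-1}(V_+)$ for all $n \geq N_2$, and applying $g$ produces $g f^n(S^1 \smallsetminus U_-) \subset V_+$. Taking $N := \max(N_1, N_2)$ yields both inclusions simultaneously.

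There is essentially no obstacle here; the content is purely the definition of hyperbolic dynamics. The only thing to be careful about is keeping straight which neighborhood plays which role --- in particular using the homeomorphism $g$ (respectively $g^{-1}$) to move $V_-$ to a neighborhood of $f_-$ and $V_+$ back to a neighborhood of $f_+$ --- so that the same $f$-iterate pushes the relevant compact set into the correct target.
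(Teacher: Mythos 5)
Your proof is correct and is exactly the routine argument the paper has in mind: the paper offers no written proof, stating only that the observation ``is a direct consequence of Definition~\ref{hyp def}'', and your reduction of both inclusions to uniform north--south convergence on compact subsets of $S^1 \smallsetminus \{f_-\}$, after transporting $V_-$ and $V_+$ by $g$ and $g^{-1}$, is the standard way to make that precise. The only point worth tightening is the uniformity step: pointwise convergence at finitely many points plus continuity of a single iterate is not quite enough by itself; one should also fix a small closed arc $J \subset U_+$ around $f_+$ with $f(J) \subset J$ (available since $f$ is conjugate into $\PSL$), so that once an orbit enters $J$ it stays in $U_+$ for all later times.
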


The proof is a direct consequence of Definition \ref{hyp def}. Note that, if
$f$ is hyperbolic, then $f^{-1}$ is as well (with attracting point $f_-$ and
repelling point $f_+$), so an analogous statement holds with $f^{-1}$ in place
of $f$ and the roles of $f_+$ and $f_-$ reversed.

We now state two useful consequences of this observation.
The proofs are elementary and left to the reader.

\begin{corollary}\label{cor:FixedPoint}
  Let $f \in \HOS$ be hyperbolic, and suppose $g$ does not exchange the fixed
  points of $f$. Then for $N$ sufficiently large, either $f^N g$ or $f^{-N}g$
  has a fixed point.
\end{corollary}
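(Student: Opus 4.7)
The plan is to split into cases according to the behavior of $g$ at $f_+$ and $f_-$, with the trivial case first. If $g(f_+) = f_+$, then $f^N g(f_+) = f^N(f_+) = f_+$ for every $N$, so any $N$ suffices; similarly if $g(f_-) = f_-$. So I may assume $g(f_+) \neq f_+$ and $g(f_-) \neq f_-$. The non-exchange hypothesis then means that at least one of the inequalities $g(f_+) \neq f_-$ or $g(f_-) \neq f_+$ holds.

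Since $f^{-1}$ is again hyperbolic, with the roles of $f_+$ and $f_-$ swapped, I may replace $f$ by $f^{-1}$ if needed and assume $g(f_+) \neq f_-$, equivalently $g^{-1}(f_-) \neq f_+$. Choose disjoint open arc neighborhoods $V_-$ of $g^{-1}(f_-)$ and $U_+$ of $f_+$; this is possible because the two centers are distinct points of $S^1$. By Observation~\ref{obs:Punch}, for $N$ sufficiently large,
\[f^N g\bigl(S^1 \setminus V_-\bigr) \subset U_+.\]

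Now set $J := S^1 \setminus V_-$, a closed arc. Because $U_+$ is disjoint from $V_-$, we have $U_+ \subset J$, so the restriction of $f^N g$ to $J$ is a continuous self-map of a closed arc. A continuous self-map of a space homeomorphic to $[0,1]$ has a fixed point by the Intermediate Value Theorem, and this fixed point of $f^N g|_J$ is a fixed point of $f^N g$ on $S^1$. In the remaining case $g(f_+) = f_-$ (so necessarily $g(f_-) \neq f_+$), the symmetric argument with $f^{-1}$ in place of $f$ produces a fixed point of $f^{-N}g$ for $N$ large.

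The only subtle point is ensuring the disjointness of $U_+$ and $V_-$, which is exactly where the non-exchange hypothesis enters: without it, the asymptotic target $U_+$ could overlap with the small arc $V_-$ where $f^N g$ may send points unpredictably, and the arc $J$ would fail to be mapped into itself. Once disjointness is secured, the rest is the combination of the ``punching'' behavior of hyperbolic iterates with a one-dimensional Brouwer/IVT argument, so I do not anticipate any real obstacle beyond the case analysis above.
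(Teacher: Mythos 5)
Your proof is correct and follows exactly the route the paper intends: the corollary is left to the reader as an elementary consequence of Observation~\ref{obs:Punch}, and your case analysis (reducing to $g^{-1}(f_-)\neq f_+$ after possibly replacing $f$ by $f^{-1}$) together with mapping the closed arc $S^1\setminus V_-$ into itself and applying the intermediate value theorem is precisely that consequence.
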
 

\begin{corollary}\label{cor:FixedPoint2}
  Let $f \in \HOS$ be hyperbolic, and suppose that $g^{-1}(f_-) \neq f_+$.
  Suppose also that $f^N g$ is known to be hyperbolic for large $N$.
  Then as $N \to \infty$, the attracting point of $f^N g$ approaches $f_+$
  and the repelling point approaches $g^{-1}(f_{-})$.
\end{corollary}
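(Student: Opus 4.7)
The plan is to directly localize each fixed point of $f^N g$ using the trapping behavior from Observation \ref{obs:Punch}. Since $f_+$ and $g^{-1}(f_-)$ are distinct points on $S^1$ by hypothesis, I can choose arbitrarily small open arcs $U_+ \ni f_+$ and $V_- \ni g^{-1}(f_-)$ with $\overline{U_+} \cap \overline{V_-} = \emptyset$. By Observation \ref{obs:Punch}, for $N$ sufficiently large one has $f^N g(S^1 \setminus V_-) \subset U_+$, and in particular $f^N g(\overline{U_+}) \subset U_+$, with the image $f^N g(\overline{U_+})$ contained in an arbitrarily small sub-neighborhood of $f_+$ as $N$ grows.

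First I would argue that the attracting fixed point of $f^N g$ lies in $U_+$. Since $f^N g$ maps the closed arc $\overline{U_+}$ strictly into its interior $U_+$, an intermediate value argument produces a fixed point in $U_+$. Moreover, every forward iterate $(f^N g)^k(x)$ of any $x \in U_+$ remains in $U_+$, so this fixed point attracts the neighborhood $U_+$; by the hyperbolicity of $f^N g$ it must therefore be the attractor $p_N^+$, the repeller being the unique non-attracting fixed point. As $U_+$ was an arbitrary neighborhood of $f_+$, one concludes $p_N^+ \to f_+$ as $N \to \infty$.

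For the repelling fixed point $p_N^-$, I would run the same argument on the inverse $(f^N g)^{-1} = g^{-1} f^{-N}$, viewing it as $k h^N$ with $k = g^{-1}$ and $h = f^{-1}$. Since $h$ is hyperbolic with $h_+ = f_-$ and $h_- = f_+$, the dual form of Observation \ref{obs:Punch} (noted immediately after its statement) yields, for any small neighborhoods $W \ni f_+$ and $V \ni k(h_+) = g^{-1}(f_-)$, the inclusion $(f^N g)^{-1}(S^1 \setminus W) \subset V$ for large $N$. The same trapping argument applied to $(f^N g)^{-1}$ places its attractor inside $V$, and this attractor is exactly the repeller $p_N^-$ of $f^N g$; shrinking $V$ gives $p_N^- \to g^{-1}(f_-)$.

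There is essentially no substantial obstacle here; the only point requiring care is the disjointness of the trapping arcs $U_+$ and $V_-$, which is precisely what the hypothesis $g^{-1}(f_-) \neq f_+$ provides. The hyperbolicity assumption on $f^N g$ is used only to conclude that the unique fixed point trapped in $U_+$ (respectively $V_-$) is the attractor (respectively repeller), rather than having to rule out additional fixed points by hand.
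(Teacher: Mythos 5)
Your proof is correct: the paper leaves this corollary as an exercise, and your trapping argument (disjoint arcs around $f_+$ and $g^{-1}(f_-)$, Observation~\ref{obs:Punch} applied to $f^N g$ and to its inverse $g^{-1}f^{-N}$, with hyperbolicity identifying the fixed point caught in each arc) is exactly the intended elementary derivation from that observation.
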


With these tools in hand, we can use one hyperbolic element to find others.

\begin{proposition}\label{Presque}
  Let $\rho$ be path-rigid and minimal, and suppose that $i(a,b)=\pm1$ and
  that $\rho(a)$ is hyperbolic. Then $\rho(b)$ is hyperbolic.
\end{proposition}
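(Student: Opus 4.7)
The plan is to use a bending deformation along~$a$ to replace $\rho(b)$ by $\rho(a)^N\rho(b)$ for integer~$N$, and then to show that this new element is hyperbolic for large~$N$, from which conjugacy to $\rho(b)$ will give the result. Because $\rho(a)$ is hyperbolic, it lies in a canonical hyperbolic one-parameter subgroup $\{a_t\}_{t\in\R}\subset\HOS$ with fixed points $\{a_+,a_-\}$, and this subgroup commutes with $\rho(a)$. Definition~\ref{def:Bending}(2) gives a continuous family of representations $\rho_t$ with $\rho_t(a)=\rho(a)$ and $\rho_t(b)=a_t\rho(b)$. Choosing $a_t$ so that $a_1=\rho(a)^N$, Corollary~\ref{minconj} provides a homeomorphism $h_N\in\HOS$ with $\rho_1=h_N\rho h_N^{-1}$; in particular $\phi_N:=\rho(a)^N\rho(b)$ is conjugate in $\HOS$ to $\rho(b)$ for every $N\in\Z$, and it suffices to prove that $\phi_N$ is hyperbolic for some large~$N$.

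I first treat the generic case, where $\rho(b)$ does not exchange the pair $\{a_+,a_-\}$. Pick disjoint neighborhoods $U_+$ of $a_+$ and $V$ of $\rho(b)^{-1}(a_-)$, arranged so that $U_+\subset S^1\setminus V$. By Observation~\ref{obs:Punch}, $\phi_N(S^1\setminus V)\subset U_+$ for all sufficiently large $N$. An intermediate-value argument on the lift of $\phi_N$ sending this arc into a lift of $U_+$ contained inside it produces a fixed point of $\phi_N$ in $U_+$; since $\phi_N(U_+)\subset\phi_N(S^1\setminus V)$ is a much shorter subinterval of $U_+$, the restriction $\phi_N|_{U_+}$ is a strong contraction, so this fixed point is attracting and is the unique fixed point of $\phi_N$ in $U_+$. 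The same reasoning, applied to $\phi_N^{-1}=\rho(b)^{-1}\rho(a)^{-N}$ via the symmetric form of Observation~\ref{obs:Punch}, yields a unique attracting fixed point of $\phi_N^{-1}$ in $V$, which is a unique repelling fixed point of $\phi_N$ in $V$. Any further fixed point of $\phi_N$ would have to lie outside $V$, hence in $U_+$, hence equal the attracting one; so $\phi_N$ has exactly one attracting and one repelling fixed point, making it hyperbolic. Conjugacy to $\rho(b)$ finishes this case.

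The main obstacle is the remaining \emph{exchange} case, where $\rho(b)$ swaps $\{a_+,a_-\}$. Here $\rho(b)$ has no fixed points on either of the two arcs bounded by $\{a_+,a_-\}$ (the two arcs are exchanged by $\rho(b)$) and fixes neither $a_+$ nor $a_-$, so $\rot(\rho(b))=1/2$; moreover the argument above degenerates, since $\rho(b)^{-1}(a_-)=a_+$ forces $U_+$ and $V$ to overlap, and a direct computation in lifted coordinates shows that $\phi_N$ likewise has no fixed points, so conjugacy with $\rho(b)$ by itself does not give a contradiction. Ruling this case out requires using path-rigidity more delicately: the natural way is to construct a continuous deformation of $\rho$ in $\Hom(\Gamma_g,\HOS)$ that leaves the semi-conjugacy class of $\rho$. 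Since $\rho([a,b])$ is hyperbolic, one may perturb $\rho|_{\pi_1T(a,b)}$ in the $F_2$-representation space while keeping the conjugacy class of $[\rho(a),\rho(b)]$ fixed, and extend to $\Gamma_g$ by a matching conjugation on the complementary subsurface $B$ coming from the amalgamation $\Gamma_g=A\ast_{\langle c\rangle}B$; a generic such perturbation changes some semi-conjugacy invariant (for instance $\rot(\rho(b))$), contradicting path-rigidity.
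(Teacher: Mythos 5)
Your key step in the ``generic'' case does not hold: from $\phi_N(S^1\smallsetminus V)\subset U_+$ you cannot conclude that $\phi_N=\rho(a)^N\rho(b)$ is hyperbolic. Mapping a long arc into a short subarc gives, by the intermediate value argument, the \emph{existence} of a fixed point in $U_+$ (this is exactly Corollary~\ref{cor:FixedPoint}), but it gives no uniqueness and no North--South dynamics: writing the fixed point equation as $\rho(b)(x)=\rho(a)^{-N}(x)$, one can choose a homeomorphism playing the role of $\rho(b)$ whose graph weaves across the graph of $\rho(a)^{-N}$ three or more times near $\rho(b)^{-1}(a_-)$ for \emph{every} large $N$, so that $\phi_N$ has several fixed points inside $V$ and is never hyperbolic; ``maps $U_+$ into a much shorter subinterval'' is not a contraction statement about the dynamics inside that subinterval. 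This is precisely why Corollary~\ref{cor:FixedPoint2} carries the hypothesis that $f^Ng$ ``is known to be hyperbolic'', and why the paper's proof of Proposition~\ref{Presque} does not try to show $\rho(ba^N)$ is hyperbolic. Instead it extracts only a fixed point: since $\rho(ba^N)$ has a fixed point and (by Corollary~\ref{minconj}) the twisted representation is conjugate to $\rho$, the element $\rho(b)$ itself has a fixed point, hence embeds in a one-parameter group; a second bending, now along $b$, sends $a\mapsto ba$ and shows $\rho(a^{-1}b)$ is conjugate to $\rho(a^{-1})$, hence hyperbolic; a third bending, along $a$, sends $a^{-1}b\mapsto b$ and shows $\rho(b)$ is conjugate to $\rho(a^{-1}b)$, hence hyperbolic. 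Some argument of this kind (transporting hyperbolicity, not just a fixed point, through conjugacies supplied by path-rigidity) is what your proof is missing.

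The exchange case is also not proved. Your observation that $\rot(\rho(b))=1/2$ and that $[\rho(a),\rho(b)]$ is hyperbolic with the same fixed points as $\rho(a)$ is correct and matches the start of Lemma~\ref{exchangelem}, but the conclusion ``a generic such perturbation changes some semi-conjugacy invariant'' is asserted, not established: you must exhibit a \emph{continuous path} of representations of all of $\Gamma_g$ along which $\rot(\rho(b))$ actually moves off $1/2$. The paper does this concretely: it deforms $\rho(b)$ through homeomorphisms swapping the two fixed points of $\rho(a)$ to the order-two rotation, uses Lemma~\ref{lem:ConjFamily} on each of the two complementary intervals to get a continuous family $f_t$ conjugating the commutator back to $\rho([a,b])$ (this is what allows the deformation to be extended over the complementary subgroup $B$), and only then perturbs $\rho(b)$ inside $\SO(2)$, using openness of hyperbolicity in $\PSL$ to keep the commutator's conjugacy class and concluding from the change of the semi-conjugacy invariant $\rot(\rho(b))$ that path-rigidity is violated. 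Without that construction (in particular, without the continuous family of conjugacies, which is exactly where the exchange hypothesis is used), your final paragraph is a plan rather than a proof.
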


\begin{proof}   
  We prove this under the assumption that $\rho(b)$ does not exchange the
  fixed points of $\rho(a)$. This assumption is justified by the next lemma
  (Lemma~\ref{exchangelem}).
  Assuming $\rho(b)$ does not exchange the points of $\Fix(\rho(a))$, by
  Corollary~\ref{cor:FixedPoint}, there exists some $N \in \Z$ such that
  $\rho(b a^N)$ has a fixed point. Since $a$ is hyperbolic, $a^N$ belongs to
  a 1-parameter family of homeomorphisms, and a bending deformation using this
  family gives a deformation $\rho_1$ of $\rho$ with
  $\rho_1(b) = \rho(b a^N)$. By Corollary~\ref{minconj}, using the fact that
  $\rho$ is minimal, $\rho_1$ and $\rho$ are conjugate.
  Thus, $\rho(b)$ has a fixed point and belongs to a 1-parameter group $b_t$.
  
  Now we can build a bending deformation $\rho'_t$ such that
  $\rho'_1(b) = \rho(b)$ and $\rho'_1(a) = \rho(ba)$. Thus,
  $\rho'_1(a^{-1}b) = \rho(a^{-1})$, which is hyperbolic.
  Since $\rho'_1$ and $\rho$ are conjugate, this means that $\rho(a^{-1}b)$
  is hyperbolic. Similarly, using the fact that $a$ belongs to a one-parameter
  group, there exists a bending deformation $\rho''_t$ with
  $\rho''_1(a^{-1}b) = \rho(b)$, and such that $\rho''_1$ is conjugate
  to $\rho$. This implies that $\rho(b)$ is hyperbolic.
\end{proof}

\begin{lemma}\label{exchangelem}
  Let $a, b \in \Gamma_g$ satisfy $i(a,b) =\pm 1$, and let
  $\rho\colon \Gamma_g \to \HOS$. Suppose that $\rho(a)$ is hyperbolic,
  and $\rho(b)$ exchanges the fixed points of $\rho(a)$.
  Then there is a deformation $\rho'$ of $\rho$ which is not semi-conjugate
  to~$\rho$.
\end{lemma}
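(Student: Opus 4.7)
The plan is to perturb $\rho(b)$ to break the swap $\rho(b)(a_\pm)=a_\mp$, thereby shifting $\rot(\rho(b))$ away from the value $1/2$ forced by the exchange hypothesis and producing a semi-conjugacy invariant that changes. First I record the dynamical consequences: since $\rho(b)\in\HOS$ is orientation-preserving and swaps $a_+$ and $a_-$, it must interchange the two arcs of $S^1\smallsetminus\{a_+,a_-\}$; hence $\rho(b)$ is fixed-point free, $\rho(b)^2$ fixes $a_+$ and $a_-$, and $\rot(\rho(b))=1/2$. Moreover $[\rho(a),\rho(b)]$ fixes $a_\pm$.

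A pure bending along $a$ in the sense of Definition~\ref{def:Bending}(2) does not help: any one-parameter family commuting with $\rho(a)$ preserves $\{a_+,a_-\}$ pointwise (since it stays in the identity component of the centralizer of $\rho(a)$), so $\rho_t(b)=a_t\rho(b)$ still swaps $a_\pm$ and $\rot(\rho_t(b))$ remains $1/2$. I therefore consider a more general deformation $\rho_t(b)=h_t\rho(b)$ where $h_t\in\HOS$ is a continuous family with $h_0=\id$ and $h_t(a_-)\neq a_-$ for $t>0$. Then $\rho_t(b)(a_+)=h_t(a_-)\neq a_-$, so the swap is broken. A direct computation gives $\rho_t(b)^2(a_+)=h_t(\rho(b)(h_t(a_-)))$, which for generic $h_t$ is not $a_+$; similarly $\rho_t(b)^2(a_-)\neq a_-$. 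Choosing $h_t$ generically so that no new interior fixed points appear, $\rho_t(b)^2$ becomes fixed-point free on $S^1$ for small $t>0$, whence $\rot(\rho_t(b))\neq 1/2$.

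To make this a valid deformation of $\rho$ in $\Hom(\Gamma_g,\HOS)$, one must simultaneously deform the other pairs $(\rho_t(a_i),\rho_t(b_i))$ for $i\geq 2$ so as to preserve $\prod_i[\rho_t(a_i),\rho_t(b_i)]=1$. Concretely, keep $\rho_t(a_i)=\rho(a_i)$, $\rho_t(b_i)=\rho(b_i)$ for $i\geq 3$, and let $(\rho_t(a_2),\rho_t(b_2))$ vary so that $[\rho_t(a_2),\rho_t(b_2)]$ compensates for the (small) change of $[\rho(a),\rho_t(b)]$. This is possible because $g\geq 2$ gives at least one free pair to deform, and the commutator map $\HOS^2\to\HOS$ is locally surjective at generic points. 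Since $\rot$ is a semi-conjugacy invariant and $\rot(\rho_t(b))\neq 1/2=\rot(\rho(b))$ for our chosen $t$, we conclude $\rho_t\not\sim_{sc}\rho$.

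The main obstacle is the technical step of constructing the compensating deformation of $(\rho_t(a_2),\rho_t(b_2))$: one needs local surjectivity/openness of the commutator map near $(\rho(a_2),\rho(b_2))$, a standard but nontrivial property of $\HOS$. A particularly delicate sub-case is $\rho(b)^2=\id$ (as in the PSL-Klein-bottle example), where a more careful choice of $h_t$ is needed to ensure that $\rho_t(b)^2$ genuinely loses all fixed points rather than merely moving those at $a_\pm$.
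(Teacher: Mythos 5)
Your target invariant is the right one (forcing $\rot(\rho(b))$ off the value $1/2$, which is exactly what the paper does at the end of its argument), but the mechanism you propose has two genuine gaps. The first and most serious is the ``compensation'' step. Changing $[\rho(a),\rho_t(b)]$ forces a change of $\rho_t$ on the rest of the group, and you propose to absorb it by deforming the pair $(\rho(a_2),\rho(b_2))$, invoking ``local surjectivity of the commutator map $\HOS^2\to\HOS$ at generic points.'' No such statement is available: $\HOS$ is infinite-dimensional, there is no implicit function theorem here, and in any case the point $(\rho(a_2),\rho(b_2))$ is dictated by $\rho$ (it could be, say, a pair of identity maps), so genericity cannot be assumed. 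The paper sidesteps this entirely by never letting the conjugacy class of the commutator move: writing $\Gamma_g=A\ast_{\langle c\rangle}B$ with $A=\langle a,b\rangle$, $c=[a,b]$, it deforms $b$ only through homeomorphisms that still exchange $\Fix(\rho(a))$, so that $[\rho(a),b_t]$ remains hyperbolic with the \emph{same} fixed points, hence conjugate to $\rho(c)$ by a continuous family $f_t$ (Lemma~\ref{lem:ConjFamily}); one then extends the deformation to all of $\Gamma_g$ by setting $\rho_t=f_t\rho f_t^{-1}$ on $B$, exactly as in Corollary~\ref{Kt}. If you want a valid proof, you need some device of this kind (keep the commutator in a single conjugacy class with a continuously varying conjugator), not surjectivity of the commutator map.

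The second gap is the claim that a ``generic'' small perturbation $h_t\rho(b)$ has $\rot\neq 1/2$. Since $\{a_+,a_-\}$ is a period-$2$ orbit of $\rho(b)$, if that orbit is attracting (or repelling) for $\rho(b)^2$ then \emph{every} sufficiently $C^0$-small perturbation of $\rho(b)$ still has a nearby period-$2$ orbit and rotation number exactly $1/2$; no choice of small $h_t$ breaks it. This is why the paper first performs a \emph{large} deformation of $b$ (through exchange-preserving maps, so the commutator's conjugacy class is controlled as above) down to the rigid order-two rotation, and only then perturbs inside $\SO(2)$, where the rotation number genuinely varies while the commutator with $\rho_1(a)\in\PSL$ stays hyperbolic by openness of hyperbolicity in $\PSL$. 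Your own closing remark about the case $\rho(b)^2=\id$ points at a symptom of this, but the problem is broader than that special case, and combined with the unjustified compensation step it leaves the proof incomplete.
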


\begin{proof}
Note first that the property that $\rho(b)$ exchanges the fixed points of $\rho(a)$ implies that $\rho(b^{-1} a^{-1} b)$ is hyperbolic with the same attracting and repelling points as $a$.  Hence $[\rho(a), \rho(b)]$ is hyperbolic with the same attracting and repelling points as well.
We now produce a deformation $\rho_1$ of $\rho$ such that $\rho_1(a)$ and $\rho_1(b)$
 are in $\PSL$, after this we will easily be able to make an explicit further deformation to a non semi-conjugate representation.   
 
First, conjugate $\rho$ so that $\rho(a) \in \PSL$ and so that the attracting and repelling fixed points of $\rho(a)$ are at $0$ and $1/2$ respectively (thinking of $S^1$ as $\R/\Z$).  Now choose a continuous path $b_t$ from $b_0 = b$ to the order two rotation $b_1: x \mapsto x+1/2$, and such that $b_t(0) = 1/2$ and $b_t(1/2) = 0$ for all $t$.   By the observation above, $[\rho(a), b_t]$ is hyperbolic with attracting fixed point $0$ and repelling fixed point $1/2$ for all $t$, so is conjugate to $\rho(a)$.  By Lemma \ref{lem:ConjFamily}, applied separately to $(0, 1/2)$ and $(1/2, 1)$, there exists a continuous choice of conjugacies $f_t$ such that $f_t [\rho(a), \rho(b)] f_t^{-1} = [\rho(a), b_t]$.   
Now to define $\rho_t$, we consider $\Gamma_g = A \ast_c B$ where $A = \langle a, b \rangle$ and $c = [a,b]$, and set
$$
\begin{array}{lll} 
\rho_t(\gamma) &=  f_t \rho(\gamma) f_t^{-1} &\text{ for } \gamma \in B \\
\rho_t(a) &= \rho(a) \\
\rho_t(b) &= b_t.
\end{array}
$$
This gives a continuous family of well defined representations, with $\rho_1(b)$ the standard order 2 rotation, and $\rho_1(a) \in \PSL$.  

To finish the proof of the lemma, it suffices to note that, for a sufficiently small deformation $b'_t$ of $\rho_1(b)$ in $\SO(2)$, the commutator $[\rho_1(a), b'_t]$ will remain a hyperbolic element of $\PSL$, as the set of hyperbolic elements is open.  Thus, there is a continuous path of conjugacies in $\HOS$ to $[\rho_1(a), b]$.  This allows us to build a deformation $\rho'$ of $\rho$ with $\rho'(b) = b'_t \in \SO(2)$, using the strategy from Corollary \ref{Kt}. Since $\rot(b'_t) \neq \rot(b) = 1/2$, it follows that $\rho'$ and $\rho$ are not semi-conjugate.
\end{proof}

The following corollary summarizes the results of this section.  

\begin{corollary}\label{ToutHyp}
  Let $\sim_i$ denote the equivalence relation on nonseparating simple closed
  curves in $\Sigma_g$ generated by $a \sim_i b$ if $i(a, b) = \pm 1$.
  Suppose $\rho\colon\Gamma_g \to \HOS$ is path-rigid, and suppose that there
  are simple closed curves $a, b$ with $i(a,b) = \pm1$ such that
  $\rotild[\rho(a), \rho(b)] = \pm 1$. Then $\rho$ is semi-conjugate to a
  (minimal) representation with $\rho(\gamma)$ hyperbolic for all
  $\gamma \sim_i a$.
\end{corollary}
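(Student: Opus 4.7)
The plan is to pass to a minimal representation semi-conjugate to $\rho$, use Corollary \ref{Kt} to produce one hyperbolic element, and then propagate hyperbolicity along chains of simple closed curves using Proposition \ref{Presque}.

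First, by Corollary \ref{cor:MinRep}, $\rho$ is semi-conjugate to a minimal representation $\nu$. Since semi-conjugate representations lie in the same path-component of $\Hom(\Gamma_g, \HOS)$, and semi-conjugacy is an equivalence relation, the path-component of $\nu$ is still a single semi-conjugacy class; hence $\nu$ is also path-rigid. Moreover, rotation numbers (and rotation numbers of commutators, since these are unchanged by choice of lift) are invariants of semi-conjugacy, so $\rotild[\widetilde{\nu(a)}, \widetilde{\nu(b)}] = \pm 1$. Applying Corollary \ref{Kt} to the path-rigid, minimal representation $\nu$ and the pair $(a,b)$ yields that $\nu(a)$ is hyperbolic.

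Next, I would propagate hyperbolicity along the equivalence relation $\sim_i$. For any $\gamma \sim_i a$ there is, by definition, a finite chain of nonseparating simple closed curves $a = \gamma_0, \gamma_1, \ldots, \gamma_k = \gamma$ with $i(\gamma_{j-1}, \gamma_j) = \pm 1$ for each $j$. Proceeding by induction on $j$: suppose $\nu(\gamma_{j-1})$ is hyperbolic. Since $\nu$ is path-rigid and minimal, and $i(\gamma_{j-1}, \gamma_j) = \pm 1$, Proposition \ref{Presque} applied with $(\gamma_{j-1}, \gamma_j)$ in the role of $(a,b)$ gives that $\nu(\gamma_j)$ is hyperbolic. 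Iterating, $\nu(\gamma) = \nu(\gamma_k)$ is hyperbolic, as desired.

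I do not expect any serious obstacle here: the content has already been extracted in Corollary \ref{Kt} and Proposition \ref{Presque}, and the corollary is essentially a packaging statement. The only point requiring care is to verify that the hypotheses of Proposition \ref{Presque} (path-rigidity and minimality of $\nu$) persist throughout the induction; this is automatic because the proposition is a statement about the fixed representation $\nu$ and does not alter it (the deformations used inside its proof are auxiliary and absorbed via Corollary \ref{minconj}). The mild subtlety of transporting path-rigidity from $\rho$ to the semi-conjugate minimal representation $\nu$ is the only input beyond what is already in place.
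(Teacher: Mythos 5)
Your proof is correct and follows exactly the route the paper intends (the paper offers no separate argument, presenting the corollary as a summary of the section): pass to the minimal semi-conjugate representation via Corollary \ref{cor:MinRep}, note path-rigidity and the commutator rotation number are preserved, get one hyperbolic element from Corollary \ref{Kt}, and propagate along a chain of curves realizing $\sim_i$ using Proposition \ref{Presque}. Your explicit check that path-rigidity transfers to the minimal model is the same justification the paper invokes implicitly after Corollary \ref{cor:MinRep}.
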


\begin{remark}
  In fact, as is proved in \cite{RigidGeom}, the relation $\sim_i$ has only a
  single equivalence class! However, we will not need to use this fact here,
  so to keep the proof as self-contained and short as possible we will not
  refer to it further.
\end{remark}

\section{Step 3: configuration of fixed points} \label{fixsec}

The objective of this section is to organize the fixed points of the
hyperbolic elements in a directed $5$-chain; we will achieve this gradually by
considering first $2$-chains, then $3$-chains, and finally $5$-chains.

As in Definition \ref{hyp def}, for a hyperbolic element $f \in \HOS$ we let
$f_+$ denote the attracting fixed point of $f$, and $f_-$ the repelling point.
By ``$\Fix(f)$ \emph{separates} $\Fix(g)$'' we mean that $g_-$ and $g_+$ lie
in different connected components of $S^1 \smallsetminus \Fix(f)$.
In particular, $\Fix(f)$ and $\Fix(g)$ are disjoint.

\begin{lemma}\label{HypQuinconce}
  Let $\rho$ be path-rigid and minimal, and let $a,b$
  be simple closed curves with $i(a,b)=\pm 1$ and $\rho(a)$ hyperbolic.
  Then $\rho(b)$ is hyperbolic, and $\Fix(\rho(a))$ separates
  $\Fix(\rho(b))$ in $S^1$.
\end{lemma}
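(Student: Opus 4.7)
The first assertion, that $\rho(b)$ is hyperbolic, is Proposition~\ref{Presque}; the real content is the separation. Set $p = \rho(a)_+$ and $q = \rho(a)_-$, and let $I$ and $J$ denote the two open arcs of $S^1 \setminus \{p,q\}$. I would argue by contradiction, in two cases: (A) $\Fix(\rho(a)) \cap \Fix(\rho(b)) \neq \emptyset$, or (B) the fixed-point sets are disjoint but both points of $\Fix(\rho(b))$ lie in a single arc, say $I$. The central tool is bending along $a$ combined with Corollary~\ref{cor:FixedPoint2}.

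Since $\rho(a)$ is hyperbolic, $\rho(a)^N$ embeds in a one-parameter family for every $N \in \Z$, so Definition~\ref{def:Bending} produces a deformation $\rho_N$ with $\rho_N(a) = \rho(a)$ and $\rho_N(b) = \rho(a)^N \rho(b)$. Corollary~\ref{minconj} supplies a conjugator $h_N \in \HOS$ with $\rho_N = h_N \rho h_N^{-1}$. Because $h_N$ commutes with $\rho(a)$, it preserves $\{p,q\}$; being orientation preserving with $\rho(a) \neq \rho(a)^{-1}$, it cannot swap these points, so it fixes $p$ and $q$ individually and preserves each of $I$ and $J$ setwise. Lemma~\ref{exchangelem} combined with path-rigidity forbids $\rho(b)$ from swapping $\{p,q\}$, so Corollary~\ref{cor:FixedPoint2} is available with $f = \rho(a)^{\pm 1}$ for at least one sign: as $|N| \to \infty$, the attracting fixed point of $\rho(a)^{\pm N}\rho(b)$ approaches the attractor of $\rho(a)^{\pm 1}$, while the repelling fixed point approaches $\rho(b)^{-1}$ applied to the repeller of $\rho(a)^{\pm 1}$.

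In case (B), $\Fix(\rho_N(b)) = h_N(\Fix(\rho(b))) \subset I$ for every $N$. A direct dynamical computation shows $\rho(b)^{-1}(q) \in J$: since $\rho(b)$ has no fixed points in the long complementary arc of $\Fix(\rho(b))$ (the arc containing $q$, $p$, and all of $J$), and the flow there carries $q$ along $I$ toward the attractor of $\rho(b)$, the preimage of $q$ sits on the opposite side of $q$, in $J$. Corollary~\ref{cor:FixedPoint2} therefore forces the repelling fixed point of $\rho_N(b)$ into $J$ for large $N$, contradicting the inclusion $\Fix(\rho_N(b)) \subset I$. In case (A), after symmetry assume $p$ is a shared fixed point. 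Then $p$ is exactly fixed by $\rho_N(b)$ for every $N$, and via $h_N$ it inherits its attractor or repeller role from the corresponding fixed point of $\rho(b)$. One now chooses the sign of the bending so that Corollary~\ref{cor:FixedPoint2} predicts a different limit for this particular fixed point: if $p = \rho(b)_-$, use $\rho(a)^N\rho(b)$, whose repelling fixed point equals $p$ for every $N$ but is forced to approach $\rho(b)^{-1}(q) \neq p$; if $p = \rho(b)_+$, use $\rho(a)^{-N}\rho(b)$, whose attracting fixed point equals $p$ for every $N$ but must approach $q \neq p$. The main obstacle is the bookkeeping of attractor/repeller roles under $h_N$; with that in hand, the contradictions in both cases follow directly from Corollary~\ref{cor:FixedPoint2}.
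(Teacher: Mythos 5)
Your overall framework (bending along $a$, the conjugator $h_N$ supplied by Corollary~\ref{minconj}, and the limits from Corollary~\ref{cor:FixedPoint2}) is the same as the paper's, and your case (A) is correct: since $h_N$ commutes with $\rho(a)$ it fixes $p$ and $q$ and preserves attractor/repeller roles (that, rather than orientation-preservation, is the right reason it cannot swap them), so the shared fixed point of $\rho_N(b)$ is constant in $N$ while Corollary~\ref{cor:FixedPoint2} forces it to converge elsewhere. Case (B), however, has a genuine gap: the claim $\rho(b)^{-1}(q)\in J$ is unjustified and false in general. List the points of $\Fix(\rho(b))$ in the order they appear along $I$ going from $p$ to $q$. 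If that order is $(b_+,b_-)$, then on the long complementary arc of $\Fix(\rho(b))$ (which runs from $b_-$ through $q$, $J$, $p$ to $b_+$) the map $\rho(b)$ moves points from $b_-$ toward $b_+$, so $\rho(b)^{-1}(q)$ lies strictly between $b_-$ and $q$, i.e.\ \emph{inside} $I$ -- never in $J$. In that configuration your contradiction evaporates: the attracting point of $\rho(a)^N\rho(b)$ tends to $p$, an endpoint of $I$ approachable from within $I$, and the repelling point tends to a point of $I$, which is perfectly compatible with $\Fix(\rho_N(b))=h_N(\Fix(\rho(b)))\subset I$. Even in the other configuration $(b_-,b_+)$, the point $\rho(b)^{-1}(q)$ may lie in $J$, at $p$, or in the sub-arc of $I$ between $p$ and $b_-$, depending on how far $\rho(b)$ moves points, so nothing forces it into $J$.

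What is missing is the stronger constraint that the paper exploits: since $h_N$ fixes $p$ and $q$ and is orientation-preserving, the four points $p,q,h_N(b_+),h_N(b_-)$ occur in the \emph{same cyclic order} as $p,q,b_+,b_-$ for every $N$, not merely inside the same arc. Choosing the sign of the twist according to the configuration (equivalently, normalizing to the cyclic order $(a_+,a_-,b_+,b_-)$ as the paper does, by reversing the orientation of $S^1$ and replacing $b$ by $b^{-1}$ if necessary), this order constraint squeezes \emph{both} fixed points of $\rho_N(b)$ against $a_+$, which forces the degenerate equality $\rho(b)^{-1}(a_-)=a_+$; that case must then be excluded by a separate short argument (the paper shows $\rho_N(b)$ would have no fixed point in a suitable interval, again contradicting the order). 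Your write-up fixes the sign $\rho(a)^{N}\rho(b)$ once and for all -- even though, as you note, only one sign of Corollary~\ref{cor:FixedPoint2} may be available -- and never treats the degenerate case $\rho(b)(p)=q$, for which that corollary does not apply with your chosen sign. As written, case (B) does not close; repairing it essentially leads you back to the paper's cyclic-order argument.
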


\begin{proof}
  That $\rho(b)$ is hyperbolic follows from Proposition~\ref{Presque} above.
  
  As a first step, let us prove that $\Fix(\rho(a))$ and $\Fix(\rho(b))$ are
  disjoint. Suppose for contradiction that they are not.
  Then, (after reversing orientations if needed) we have
  $\rho(a)_+=\rho(b)_+$.  Let $I$ be a neighborhood of $\rho(a)_+$ with
  closure disjoint from
  $\{ \rho(a)_-,  \rho(b)_-\}$. Then, for $N>0$
  large enough, we have $\overline{I}\subset\rho(a^{-N}b)(I)$.
  Let $\rho_t$ be a bending deformation with $\rho_0=\rho$,
  $\rho_t(a)=\rho(a)$ and $\rho_1(b)=\rho(a^{-N}b)$.
  By Corollary~\ref{minconj},
  $\rho_1(b)$ is hyperbolic. Since $\overline{I}\subset\rho(a^{-N}b)(I)$,
  its attracting fixed point is outside $I$,
  and hence $\rho_1(b)_+\neq\rho_1(a)_+$. But $\rho$ and $\rho_1$ are
  conjugate by Corollary~\ref{minconj}; this is a contradiction.
  
  Now that we know that $\Fix(\rho(a))\cap\Fix(\rho(b))=\emptyset$, we will
  prove that they separate each other.
  Suppose for contradiction that $\Fix(\rho(a))$ does not separate
  $\Fix(\rho(b))$. Up to conjugating $\rho$ by an orientation-reversing
  homeomorphism of $S^1$, and up to replacing $b$ with $b^{-1}$, the fixed
  points of $\rho(a)$ and $\rho(b)$ have cyclic order
  $( a_+, \,  a_-,  \, b_{+}, \, b_{-})$.
  (For simplicity, we have suppressed the notation $\rho$.)
  
  Fix $N \in \N$ large, and let $\rho'$ be a bending deformation of $\rho$ so
  that $\rho'(b) = \rho(a^{N})\rho(b)$, and $\rho'(a) = \rho(a)$.
  It follows from Corollaries~\ref{minconj} and~\ref{cor:FixedPoint2}
  that, if $N$ is large enough, the points
  $b_+'=\rho'(b)_+$ and $b_-'=\rho'(b)_-$ can be taken arbitrarily close,
  respectively, to $a_+$ and $\rho(b)^{-1}(a_-)$.
  Since the cyclic order of fixed points is preserved under deformation
  they are also in order $(a_+,a_-,b_+',b_-')$.
  This is incompatible with the positions
  of $a_+$ and $\rho(b)^{-1}(a_-)$, unless perhaps if
  $\rho(b)^{-1}(a_-)=a_+$.
  But if $\rho(b)^{-1}(a_-)=a_+$, then $\rho'(b)$ has no
  fixed point in $(\rho(b)^{-1}(a_+),a_+)$ as this interval is mapped into
  $(a_+,a_-)$ by $\rho(b')$. This again gives an incompatibility with the
  cyclic order.
\end{proof}
\begin{lemma}\label{lem:Ordre3Chaines}
  Let $\rho$ be path-rigid and minimal, and let $(a,b,c)$ be a directed
  $3$-chain. Suppose that $\rho(a)$ is hyperbolic, and suppose that
  $\rho(a)$ and $\rho(c)$ do not have a common fixed point.
  Then $\rho(b)$ and $\rho(c)$ are hyperbolic, and, up to reversing the
  orientation of $S^1$, their fixed points are in the cyclic order
  \[ (\rho(a)_-,\rho(b)_-,\rho(a)_+,\rho(c)_-,\rho(b)_+,\rho(c)_+). \]
\end{lemma}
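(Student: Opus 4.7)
The strategy has two stages. First, I iterate Lemma~\ref{HypQuinconce} along the chain: since $i(a,b)=\pm 1$ and $\rho(a)$ is hyperbolic, the lemma gives that $\rho(b)$ is hyperbolic and $\Fix(\rho(a))$ separates $\Fix(\rho(b))$. With this in hand and $i(b,c)=\pm 1$, a second application yields $\rho(c)$ hyperbolic and $\Fix(\rho(b))$ separating $\Fix(\rho(c))$. Together with the hypothesis that $\rho(a)$ and $\rho(c)$ share no fixed point, the six points $\{a_\pm, b_\pm, c_\pm\}$ are pairwise distinct.

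To pin down their cyclic order, I would exploit the directed $3$-chain structure via the Dehn twist $\tau_c$, which (by the formula recalled after Figure~\ref{fig:Chaine}) fixes both $a$ and $c$ and maps $b$ to $c^{-1}b$. Since $\rho(c)$ is hyperbolic it embeds in a $1$-parameter subgroup of $\HOS$, so for each $N\in\Z$ I can realize $\tau_c^N$ by a bending deformation of $\rho$, producing a continuous path in $\Hom(\Gamma_g,\HOS)$ from $\rho$ to a representation $\rho_N$ with $\rho_N(a)=\rho(a)$, $\rho_N(c)=\rho(c)$, and $\rho_N(b)=\rho(c)^{-N}\rho(b)$. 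By Corollary~\ref{minconj} (using path-rigidity and minimality), each $\rho_N$ is conjugate to $\rho$ by some $h_N\in\HOS$; since $h_N$ commutes with both $\rho(a)$ and $\rho(c)$, it fixes all four points $\{a_\pm,c_\pm\}$ and preserves each of the four arcs of $S^1\setminus\{a_\pm,c_\pm\}$. By Corollary~\ref{cor:FixedPoint2}, the attracting fixed point of $\rho_N(b)$ tends to $c_-$ as $N\to+\infty$ and to $c_+$ when the bending is performed in the opposite direction. Tracking the continuous path $t\mapsto \rho_t(b)_+=h_t(b_+)$ and using that each $h_t$ preserves arcs, one sees that $b_+$ lies in an arc whose closure contains this limit.

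Combining both bending directions, the arc of $S^1\setminus\{a_\pm,c_\pm\}$ containing $b_+$ must have both $c_-$ and $c_+$ in its closure. If $\Fix(\rho(a))$ separated $\Fix(\rho(c))$, each of the four arcs of $S^1\setminus\{a_\pm,c_\pm\}$ would have one $a$-endpoint and one $c$-endpoint, so no arc could contain both $c_\pm$ in its closure; this contradiction shows $\Fix(\rho(a))$ does not separate $\Fix(\rho(c))$, and $b_+$ lies in the unique arc between $c_-$ and $c_+$ disjoint from $\{a_\pm\}$. The separation $\Fix(\rho(a))$--$\Fix(\rho(b))$ from Lemma~\ref{HypQuinconce} then forces $b_-$ into the opposite $a$-arc (the one disjoint from $\{c_\pm\}$), and assembling the six points yields the cyclic order $(a_-,b_-,a_+,c_-,b_+,c_+)$ up to reversing the orientation of $S^1$. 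The main technical obstacle I anticipate is realizing the bending so that $\rho_N(a)=\rho(a)$ holds exactly: the standard torus bending of Definition~\ref{def:Bending} applied to $T(c,b)$ does not automatically preserve $\rho(a)$ because $a$ crosses $\partial T(c,b)$, so the bending should instead be set up via the HNN decomposition of $\Gamma_g$ along the non-separating curve $c$, in which $a$ lies entirely in the stationary factor and the Dehn twist is realized cleanly.
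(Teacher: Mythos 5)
Your first stage and the bending along $c$ are essentially fine (two small points: Corollary~\ref{minconj} only applies at the integer times $a_1=a^N$, but your argument really only needs the integer values $N$, not the ``continuous tracking''; and Corollary~\ref{cor:FixedPoint2} assumes $\rho(b)^{-1}(c_+)\neq c_-$, a degenerate case you should dispose of, e.g.\ by noting that then both fixed points of $\rho(c)^{-N}\rho(b)$ are still trapped near $c_-$). The genuine gap is in the final ``assembling'' step. What your constraints give is: the six points are distinct, $\Fix(\rho(a))$ separates $\Fix(\rho(b))$, $\Fix(\rho(b))$ separates $\Fix(\rho(c))$, $\Fix(\rho(a))$ does not separate $\Fix(\rho(c))$, and $b_+$ lies in the arc bounded by $c_-$, $c_+$ disjoint from $a_\pm$. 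After normalizing the orientation so that the order is $(a_-,b_-,a_+,b_+)$, there are \emph{two} configurations satisfying all of this: $(a_-,b_-,a_+,c_-,b_+,c_+)$ and $(a_-,b_-,a_+,c_+,b_+,c_-)$. The second is not the orientation-reversal of the first (reversing the orientation of $S^1$ reverses the cyclic order but does not exchange attracting and repelling points), so your data do not determine which of $c_-$, $c_+$ lies in $(a_+,b_+)$; the chirality of the chain is left open, and the conclusion of the lemma does not follow.

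This chirality is exactly the substantive second half of the paper's proof, which your twist along $c$ cannot see: bending along $c$ only moves $\Fix(\rho(b))$ and never distinguishes $c_-$ from $c_+$ relative to $a$ and $b$. The paper first reaches the same intermediate statement as you (by twisting along $a$, so that $\rho(ba^N)_-$ sweeps close to $a_+$ and to $a_-$, forcing one fixed point of $c$ into each of $(a_+,b_+)$ and $(b_+,a_-)$), and then rules out the configuration $(a_-,b_-,a_+,c_+,b_+,c_-)$ by a bending realizing a large power of the Dehn twist along $b$, which sends $a\mapsto b^{-N}a$, $c\mapsto cb^N$ and fixes $b$: by Corollary~\ref{cor:FixedPoint2} the new fixed points $c_+'$, $c_-'$, $a_-'$ approach $c(b_+)$, $b_-$, $a^{-1}(b_+)$, which sit in the cyclic order opposite to that of $c_+$, $c_-$, $a_-$, contradicting the conjugacy to $\rho$ guaranteed by Corollary~\ref{minconj}. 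You need to add an argument of this kind (or some other mechanism that pins down the relative position of $c_-$ and $c_+$) to complete the proof.
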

\begin{proof}
  It follows from Proposition~\ref{Presque} that $\rho(b)$ and $\rho(c)$
  are hyperbolic, and from Lemma~\ref{HypQuinconce} that up to reversing
  orientation, the fixed points of $\rho(a)$ and $\rho(b)$ come in the
  cyclic order
  \[ (a_-,b_-,a_+,b_+). \]
  (For simplicity we drop $\rho$ from the notation for the fixed points of
  $a$, $b$ and $c$).
  As mentioned above, the effect of a bending deformation that realizes
  a power of a Dehn twist along $a$ is to
  leave $a$ and $c$ invariant and to replace $b$ with $ba^N$.  
  Corollary~\ref{minconj} says that the resulting representation is
  conjugate to $\rho$. By doing this with $N>0$ and $N<0$ large, we get
  representations for which $b_-'=\rho(ba^N)_-$ can be taken arbitrarily
  close to $a_+$, as well as to $a_-$. This, and Lemma~\ref{HypQuinconce}
  applied to the curves $(b,c)$, imply that the intervals $(a_+,b_+)$
  and $(b_+,a_-)$ each contain one fixed point of $c$. In order to prove
  the lemma, it now suffices to prove that the cyclic order of fixed points
  \[ (a_-,b_-,a_+,c_+,b_+,c_-) \]
  cannot occur. Suppose for contradiction that this configuration holds,
  and apply a power of Dehn twist along $b$, replacing $a$ with
  $b^{-N}a$ and $c$ with $cb^N$ (and leaving $b$ invariant), for $N>0$ large.
  Denote by $c_+'$, $c_-'$, $a_-'$ and $a_+'$ the resulting fixed points,
  ie, the fixed points of $\rho(cb^N)$ and $\rho(b^{-N}a)$ for $N>0$
  large.
  If $N$ is chosen large enough, then $c_+'$, $c_-'$ and $a_-'$ are arbitrarily close to
  $c(b_+)$, $b_-$ and $a^{-1}(b_+)$ respectively.  (See Corollary~\ref{cor:FixedPoint2}
  above.) These three points are in the inverse cyclic order as $c_+$,
  $c_-$ and $a_-$, hence, the representation $\rho'$ obtained from this
  Dehn twist cannot be conjugate to $\rho$.  This contradicts
  Corollary~\ref{minconj}, so eliminates the undesirable configuration.
\end{proof}

We are now ready to prove the main result of this section. 
\begin{proposition}\label{prop:Ordre5Chaines}
  Let $\rho$ be a path-rigid, minimal representation, and let
  $(a,b,c,d,e)$ be a directed $5$-chain in $\Sigma_g$. Suppose $\rho(a)$ is
  hyperbolic. Then, $\rho(b)$,\ldots,$\rho(e)$ are hyperbolic as well, and
  up to reversing the orientation of the circle, their fixed points are in
  the following (total) cyclic order:
  \[ (a_-,b_-,a_+,c_-,b_+,d_-,c_+,e_-,d_+,e_+).  \]
\end{proposition}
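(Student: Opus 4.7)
The plan is to apply Lemma~\ref{lem:Ordre3Chaines} to each of the three consecutive $3$-subchains $(a,b,c)$, $(b,c,d)$, and $(c,d,e)$, and then glue the three resulting $6$-point cyclic orders into a single cyclic order on all $10$ points.

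First, I would establish hyperbolicity of $\rho(b)$, $\rho(c)$, $\rho(d)$, and $\rho(e)$. Since consecutive entries of a directed chain satisfy $i(\gamma_i,\gamma_{i+1})=\pm 1$, Proposition~\ref{Presque} applied inductively starting from the hypothesis that $\rho(a)$ is hyperbolic yields hyperbolicity of each $\rho(\gamma_i)$. Pairwise, Lemma~\ref{HypQuinconce} then tells us that $\Fix(\rho(\gamma_i))$ separates $\Fix(\rho(\gamma_{i+1}))$, fixing the local cyclic structure on consecutive pairs.

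Second, and this is the main obstacle, I would verify for each consecutive triple $(\gamma_{i-1},\gamma_i,\gamma_{i+1})$ that $\rho(\gamma_{i-1})$ and $\rho(\gamma_{i+1})$ share no fixed point, which is precisely the hypothesis required by Lemma~\ref{lem:Ordre3Chaines}. I would proceed by contradiction: assume, say, $\rho(a)_+=\rho(c)_+$; by Lemma~\ref{HypQuinconce} applied to $(a,b)$ and $(b,c)$, the fixed points of $\rho(a)$ and $\rho(c)$ lie on specific sides of $\Fix(\rho(b))$, forcing a definite cyclic arrangement of $\{a_-,a_+=c_+,b_-,b_+,c_-\}$. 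Since $\rho(b)$ is hyperbolic, it embeds in a $1$-parameter group, and a bending deformation along $b$ using the family $\rho(b)^{tN}$ yields, at $t=1$, a representation $\rho_1$ which by Corollary~\ref{minconj} is conjugate to $\rho$. The fixed points of $\rho_1(a)$, however, can be placed arbitrarily close to $\rho(b)_+$ and $\rho(a)^{-1}\rho(b)_-$ for $N$ large (Corollary~\ref{cor:FixedPoint2}), and a suitable choice of sign of $N$ contradicts the cyclic order that the assumed common fixed point imposes. The same argument, applied to the middle element of each triple, handles $(b,c,d)$ and $(c,d,e)$.

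Third, once the hypothesis is verified, Lemma~\ref{lem:Ordre3Chaines} applies to each triple and produces a $6$-point cyclic order (up to orientation reversal). Fix the orientation from the first triple so that the order reads $(a_-, b_-, a_+, c_-, b_+, c_+)$. Restricted to the four points $\{b_\pm,c_\pm\}$ this gives $(b_-, c_-, b_+, c_+)$, and the same restriction of the second triple's order also equals $(b_-, c_-, b_+, c_+)$ (the reversed orientation would give $(b_-, c_+, b_+, c_-)$, which is incompatible), so the orientation is forced and the second triple's order is $(b_-, c_-, b_+, d_-, c_+, d_+)$. The analogous argument pins down the third triple's order as $(c_-, d_-, c_+, e_-, d_+, e_+)$. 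Inserting $d_\pm$ into the five-point order from the first triple at the positions dictated by the second triple, and then inserting $e_\pm$ using the third triple, produces exactly the claimed total cyclic order
\[ (a_-,b_-,a_+,c_-,b_+,d_-,c_+,e_-,d_+,e_+). \]
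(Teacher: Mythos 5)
Your reduction to Lemma~\ref{lem:Ordre3Chaines} and your orientation-matching argument are fine, but the final gluing step has a genuine gap: the three consecutive triples $(a,b,c)$, $(b,c,d)$, $(c,d,e)$ do \emph{not} determine the claimed $10$-point cyclic order. From $(b,c,d)$ you only learn that $d_+$ lies in the arc from $c_+$ to $b_-$, and that arc contains $a_-$; nothing in your three constraints decides whether $d_+$ lies in $(c_+,a_-)$ or in $(a_-,b_-)$, and similarly the positions of $e_-$ and $e_+$ relative to $a_\pm$ and $b_\pm$ are left open. Concretely, the cyclic arrangement $(a_-,d_+,e_+,b_-,a_+,c_-,b_+,d_-,c_+,e_-)$ restricts to $(a_-,b_-,a_+,c_-,b_+,c_+)$, $(b_-,c_-,b_+,d_-,c_+,d_+)$ and $(c_-,d_-,c_+,e_-,d_+,e_+)$ on the three triples, so it satisfies all of your constraints, yet it differs from the order in the statement (here $a_-$ sits between $e_-$ and $d_+$). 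This is exactly why the paper's proof does not stop at consecutive triples: it additionally applies Lemma~\ref{lem:Ordre3Chaines} to the directed $3$-chains $(a,cb,d)$ and $(a,dcb,e)$, whose middle entries are products of chain elements, in order to locate $d_+$ in $(c_+,a_-)$ and $e_\pm$ relative to $a_\pm$. Some such ``long-range'' input relating $d$ and $e$ directly to $a$ is unavoidable, and your proof is missing it.

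A secondary point: your verification of the no-common-fixed-point hypothesis differs from the paper and is not complete as written. The paper twists along a curve disjoint from $a$ (namely $d$ for the pair $(a,c)$), so that only the fixed points of $c$ move while $a$ is untouched, and invokes Corollary~\ref{minconj} together with the conjugation-invariance of the conclusion to assume disjointness without loss of generality. Your bending along the middle curve $b$ moves \emph{both} $a$ and $c$ (to $b^{-N}a$ and $cb^{N}$, say), so tracking only $\Fix(\rho_1(a))$ cannot yield a contradiction by itself: the conjugation-invariant statement you must violate is the persistence of the coincidence, e.g.\ $\rho_1(a)_+=\rho_1(c)_+$, which requires locating the fixed points of $\rho_1(c)$ as well (via Corollary~\ref{cor:FixedPoint2}) and handling degenerate coincidences such as $\rho(c)(b_+)=b_-$, plus the other matching patterns ($a_+=c_-$, both points equal, etc.). This part can likely be repaired along the lines you indicate, but the paper's choice of twisting curve avoids the bookkeeping entirely; the gluing gap above is the essential defect.
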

In particular, these fixed points are all distinct.  As before, for
simplicity we have dropped $\rho$ from the notation.
\begin{proof}
  That $\rho(b)$,\ldots,$\rho(e)$ are all hyperbolic follows from
  Proposition~\ref{Presque}. Next, using a bending deformation realizing a
  Dehn twist along $d$, we may change the action of $c$ 
  into $d^{-N}c$ without changing $a$, and without changing the conjugacy class of $\rho$.
  In particular, such a deformation moves the fixed points of $c$, so we can ensure that 
  $\Fix(\rho(a))$ and
  $\Fix(\rho(c))$ are disjoint. 
  
  Similarly, for any two elements in the chain
  $(a,b,c,d,e)$,
  there is a third one that intersects one but not the other.
  Thus, we may apply the same reasoning to show that all these five
  hyperbolic elements have pairwise disjoint fixed sets.
  It remains to order these fixed sets.  For this, we will
  apply Lemma~\ref{lem:Ordre3Chaines} repeatedly.
  
  First, fix the orientation of $S^1$ so that, applying Lemma~\ref{lem:Ordre3Chaines} to the directed $3$-chain
  $(a,b,c)$, we have the cyclic order of fixed points
  \[ (a_-,b_-,a_+,c_-,b_+,c_+). \]
  Now, Lemma~\ref{lem:Ordre3Chaines} applied to the directed $3$-chain
  $(b,c,d)$ implies that $d_-$ lies in the interval $(b_+,c_+)$ and $d_+$ in the
  interval $(c_+,b_-)$.   Applying the lemma to  the directed $3$-chain
  $(a,cb,d)$ implies that $d_+$ in fact lies in the interval $(c_+,a_-)$.

  The same argument using Lemma~\ref{lem:Ordre3Chaines} applied to the directed
  $3$-chains $(c,d,e)$ and $(a,dcb,e)$ shows that $e_-$ lies in the interval
  $(c_+,d_+)$ and $e_+$ in the interval $(d_+,a_-)$, as desired.  
\end{proof}


\section{Step 3: Maximality of the Euler number}  \label{subsurface sec}
In order to compute the Euler number of $\rho$, we will decompose $\Sigma_g$
into subsurfaces and compute the contribution to $\eu(\rho)$ from each part.   The proper framework for discussing this is the language of bounded cohomology: if $\Sigma$ is a surface with boundary $\partial \Sigma$, and $\rho: \pi_1(\Sigma) \to \HOS$, one obtains a characteristic number by pulling back the {\em bounded Euler class } in $H^2_b(\HOS; \R)$ to $H^2_b(\Sigma; \R) \cong H^2_b(\Sigma, \partial\Sigma; \R)$ and pairing it with the fundamental class $[\Sigma, \partial \Sigma]$.   The contribution to the Euler number of $\rho: \Sigma_g \to \HOS$ from a subsurface $\Sigma$ is simply this Euler number for the restriction of $\rho$ to $\Sigma$.  

However, in order to keep this work self-contained and elementary, we will avoid introducing the language of bounded cohomology, and give definitions in terms of rotation numbers alone.  The reader may refer to \cite[\S~4.3]{BIW} for details on the cohomological framework. 

\begin{definition}[Euler number for pants]
Let $\rho: \Gamma_g \to \HOS$, and let $P \subset \Sigma_g$ be a subsurface
homeomorphic to a pair of pants, bounded by curves $a, d$ and $(da)^{-1}$,
with orientation induced from the boundary.
Let $\widetilde{\rho(a)}$ and $\widetilde{\rho(d)}$ be any lifts of
$\rho(a)$ and $\rho(d)$ to $\HOZ$.
The \emph{Euler number of $\rho$ on $P$} is the real number
\[
\eu_P(\rho) = \rotild\left(\widetilde{\rho(a)}\right) + \rotild \left(\widetilde{\rho(b)}\right) - \rotild \left(\widetilde{\rho(d)}\widetilde{\rho(a)}\right).
\]
\end{definition}

An illustration in the case where $P$ contains the basepoint is given in Figure~\ref{figure:pantalon}.  

\begin{figure}[h]
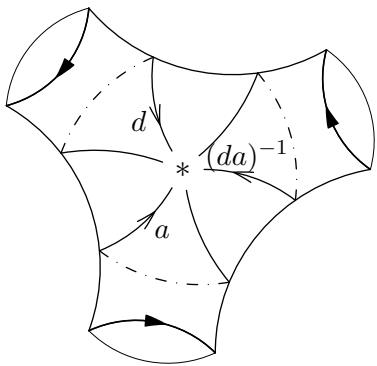

\begin{asy}
  import geometry;
  
  real long=70, ouv=40, corr1=10, corr2=35;
  
  pair p0=(long,0), p1=p0*dir(ouv), p2=p0*dir(120);
  
  path chemin1=p1{dir(ouv+180-corr1)}..{dir(120+corr1)}p2;
  path chemin2=rotate(120)*chemin1, chemin3=rotate(120)*chemin2;
  
  path bordgros1=p0{dir(90+0.5*ouv+corr2)}..p1;
  path bordfin1=p0{dir(90+0.5*ouv-corr2)}..p1;
  path bordgros2=rotate(120)*bordgros1;
  path bordfin2=rotate(120)*bordfin1;
  path bordgros3=rotate(120)*bordgros2;
  path bordfin3=rotate(120)*bordfin2;
  
  pair pa1=intersectionpoint(chemin1,(0.4*long,0)--(0.4*long,50));
  pair pa2=intersectionpoint(chemin3,(0.6*long,0)--(0.6*long,-50));
  
  path chema1=8*dir(ouv){dir(ouv)}..pa1;
  path chema2=pa1{dir(-50)}..pa2, chema3=pa2..{dir(180)}(8,0);
  
  draw(chemin1); draw(chemin2); draw(chemin3);
  
  draw(bordgros1,MidArrow); draw(bordgros1,black+0.6);
  draw(bordfin1,black+0.25); draw(bordgros2,MidArrow);
  draw(bordgros2,black+0.6); draw(bordfin2,black+0.25);
  
  draw(bordgros3,MidArrow); draw(bordgros3,black+0.6);
  draw(bordfin3,black+0.25); label("$*$",(0,0));
  
  draw(chema1); draw(chema2,dashdotted);
  draw(chema3,Arrow(SimpleHead,Relative(0.7)));
  label("\small $(da)^{-1}$\normalsize",(24,5));
  
  draw(rotate(120)*chema1);
  draw(rotate(120)*chema2,dashdotted);
  draw(rotate(120)*chema3,Arrow(SimpleHead,Relative(0.7)));
  label("\small $d$\normalsize",(24,5)*dir(120));
  
  draw(rotate(240)*chema1);
  draw(rotate(240)*chema2,dashdotted);
  draw(rotate(240)*chema3,Arrow(SimpleHead,Relative(0.7)));
  label("\small $a$\normalsize",(24,5)*dir(240));
\end{asy}
\caption{A pair of pants with standard generators of its fundamental group}
\label{figure:pantalon}
\end{figure}
Note that the number $\eu_P(\rho)$ is independent of the choice of lifts of $\rho(a)$ and $\rho(d)$.  We also allow for the possibility that the image of $P$ in $\Sigma_g$ has two boundary curves identified, so is a one-holed torus subsurface.  We may choose free generators $a, b$ for the torus, with $i(a,b) = -1$ so the torus is $T(a,b)$ and the boundary of $P$ is given by the curves $b^{-1}, a^{-1}ba$ and the commutator $[a,b]$.  Then the definition above gives $\eu_P(\rho) = \rotild [\widetilde{\rho(a)}, \widetilde{\rho(b)}]$. 

\begin{lemma}\label{rk:EulerBound}
  Let $P$ be any pants and $\rho$ a representation.
  Then $| \eu_P(\rho)| \leq 1$.
\end{lemma}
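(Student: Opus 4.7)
The lemma is a relative Milnor--Wood inequality for a surface of Euler characteristic $-1$. Unwinding the definition of $\eu_P(\rho)$, it is the statement
\[
\bigl|\rotild(\tilde a) + \rotild(\tilde d) - \rotild(\tilde d \tilde a)\bigr| \leq 1
\]
for any lifts $\tilde a, \tilde d \in \HOZ$ of $\rho(a), \rho(d)$; equivalently, $\rotild$ is a homogeneous quasi-morphism on $\HOZ$ of defect at most~$1$. My plan has three steps.

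The first step is to verify that the expression is independent of the chosen lifts. Since $T \colon x \mapsto x+1$ is central in $\HOZ$, replacing $\tilde a$ by $T^m \tilde a$ shifts both $\rotild(\tilde a)$ and $\rotild(\tilde d \tilde a) = \rotild(T^m \tilde d \tilde a)$ by the same integer $m$, so the contributions cancel; likewise for $\tilde d$. Hence $\eu_P(\rho)$ is a well-defined real number.

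The second step applies the Poincar\'e estimate: for any $\tilde h \in \HOZ$, the continuous $1$-periodic function $\phi_{\tilde h}(x) := \tilde h(x) - x$ has oscillation strictly less than~$1$ (monotonicity of $\tilde h$ and $T$-equivariance force $\tilde h(y) \in [\tilde h(x), \tilde h(x)+1)$ whenever $y \in [x, x+1)$) and contains $\rotild(\tilde h)$ in its closed range, so $|\phi_{\tilde h}(x) - \rotild(\tilde h)| < 1$ for every $x \in \R$. Applied to $\tilde h \in \{\tilde a, \tilde d, \tilde d \tilde a\}$ together with the identity $\phi_{\tilde d \tilde a}(x) = \phi_{\tilde d}(\tilde a(x)) + \phi_{\tilde a}(x)$, this already yields the weaker bound $|\eu_P(\rho)| < 2$.

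The main obstacle is sharpening the constant from $2$ to~$1$. I would do this by iterating along the orbit $x_k := (\tilde d \tilde a)^k(0)$: applying the Poincar\'e estimate to $(\tilde d \tilde a)^n$ gives $|(\tilde d \tilde a)^n(0) - n\,\rotild(\tilde d \tilde a)| < 1$ independently of $n$, so this error contributes only $1/n$ after normalization. Telescoping expresses $(\tilde d \tilde a)^n(0) = \sum_{k=0}^{n-1}\bigl[\phi_{\tilde d}(\tilde a(x_k)) + \phi_{\tilde a}(x_k)\bigr]$, and the desired sharp bound follows by observing that the two Birkhoff averages along the $\tilde d \tilde a$-orbit cannot simultaneously deviate by the full amount $1$ from $\rotild(\tilde d)$ and $\rotild(\tilde a)$: the linked integer-valued jumps $\lfloor \tilde a(x_k)\rfloor$ and $\lfloor \tilde d(\tilde a(x_k))\rfloor$ that contribute to the iterate $(\tilde d \tilde a)^n(0)$ force their sum to account for exactly $n\,\rotild(\tilde d \tilde a)$ up to an $O(1)$ remainder, which is the quasi-morphism argument of Wood~\cite{Wood} specialised to the pants case.
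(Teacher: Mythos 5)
Your Steps 1 and 2 are essentially fine (one small repair: applying the pointwise estimate at an arbitrary $x$ to all three of $\tilde a,\tilde d,\tilde d\tilde a$ gives only $|\eu_P(\rho)|<3$; to get $<2$ you should either evaluate at a point $x_0$ with $\phi_{\tilde d\tilde a}(x_0)=\rotild(\tilde d\tilde a)$, which exists by the intermediate value theorem, or use subadditivity of the maximal displacement). The genuine gap is Step 3, which is exactly where the content of the lemma lies: the sharpening from $2$ to $1$ is the relative Milnor--Wood inequality, and your justification of it is an assertion, not an argument. The telescoping identity $(\tilde d\tilde a)^n(0)=\sum_{k}\bigl[\phi_{\tilde d}(\tilde a(x_k))+\phi_{\tilde a}(x_k)\bigr]$ is precisely what already gave the weak bound; saying that the integer jumps ``force their sum to account for $n\,\rotild(\tilde d\tilde a)$ up to $O(1)$'' restates that identity and imposes no further constraint on how the total deviation splits, nor on its size. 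In particular it does not address the real issue: pointwise, the two deviations $\phi_{\tilde a}(x)-\rotild(\tilde a)$ and $\phi_{\tilde d}(\tilde a(x))-\rotild(\tilde d)$ \emph{can} simultaneously be close to $1$ with the same sign (two parabolic-like maps of rotation number $0$ arranged suitably), so any proof must exploit averaging against a $\tilde d\tilde a$-invariant measure, which is neither $\tilde a$- nor $\tilde d$-invariant; you supply no mechanism for why that average behaves better than the pointwise bound. Note also that the paper does not reprove this lemma at all: it cites Wood, EHN, and Calegari--Walker, so ``the quasi-morphism argument of Wood'' cannot simply be invoked as a black box inside a proof that purports to establish it.

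If you want a complete elementary argument in place of Step 3, here is one that closes the gap. Let $T(x)=x+1$, which is central in $\HOZ$, and recall two facts: if $\rotild(\tilde h)<0$ then $\tilde h(x)<x$ for all $x$ (and similarly $>$ if $\rotild(\tilde h)>0$), and if $\tilde h_1\le \tilde h_2$ pointwise then $\rotild(\tilde h_1)\le\rotild(\tilde h_2)$. Take the integer $m=\lfloor\rotild(\tilde d)\rfloor+1>\rotild(\tilde d)$; then $T^{-m}\tilde d<\mathrm{id}$ pointwise, so $T^{-m}\tilde d\tilde a\le\tilde a$ pointwise and hence $\rotild(\tilde d\tilde a)-m=\rotild(T^{-m}\tilde d\tilde a)\le\rotild(\tilde a)$, giving $\rotild(\tilde d\tilde a)\le\rotild(\tilde a)+\rotild(\tilde d)+1$. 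Running the same argument with an integer $m<\rotild(\tilde d)$ (so $T^{-m}\tilde d>\mathrm{id}$ pointwise) gives $\rotild(\tilde d\tilde a)\ge\rotild(\tilde a)+\rotild(\tilde d)-1$. Together these two inequalities are exactly $|\eu_P(\rho)|\le1$; this floor/ceiling trick, rather than Birkhoff averages along the $\tilde d\tilde a$-orbit, is what produces the sharp constant.
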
 
A proof using the language of rotation numbers (consistent with our notation) can be found in \cite[Theorem~3.9]{CalegariWalker}.  
However this bound is classical and was known much earlier.   For example, the case for one-holed torus subsurfaces appears in \cite[Prop.~4.8]{Wood}, and the general case is implicit in \cite{EHN}.

More generally, if $S \subset \Sigma_g$ is any subsurface, we define the Euler number $\eu_S(\rho)$ to be the sum of relative Euler numbers over all pants in a pants decomposition of $S$.   From the perspective of bounded cohomology, it is immediate that this sum does not depend on the pants decomposition; however, since we are intentionally avoiding cohomological language, we give a short stand-alone proof. 

\begin{lemma} \label{lem:IndepPants}
For any subsurface $S \subseteq \Sigma_g$, the number $\eu_S(\rho)$ is well-defined, i.e. independent of the decomposition of $S$ into pants.  
\end{lemma}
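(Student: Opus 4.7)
The plan is to first treat the case where $S$ has nonempty boundary, for which $\pi_1(S)$ is free. Under this assumption, $\rho|_{\pi_1(S)}$ lifts to a homomorphism $\widetilde{\rho}\colon \pi_1(S) \to \HOZ$. I would fix such a lift, together with a system of basepoint paths so that, for every pants $P$ of the decomposition, the three boundary loops $\alpha_P,\beta_P,\gamma_P \in \pi_1(S)$ satisfy the cyclic relation $\alpha_P \beta_P \gamma_P = 1$ on the nose. Using the lifts provided by $\widetilde{\rho}$ in the definition of $\eu_P$, together with the identity $\rotild(\widetilde{f}^{-1}) = -\rotild(\widetilde{f})$, the Euler number of $P$ takes the symmetric form
\[
 \eu_P(\rho) \;=\; \rotild\bigl(\widetilde{\rho}(\alpha_P)\bigr) + \rotild\bigl(\widetilde{\rho}(\beta_P)\bigr) + \rotild\bigl(\widetilde{\rho}(\gamma_P)\bigr).
\]

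Next, summing over all pants in the decomposition, every interior curve appears as a boundary loop of exactly two adjacent pants, traversed there with opposite orientations; the two corresponding elements of $\pi_1(S)$ are therefore inverse to each other up to conjugation. Since $\rotild$ is invariant under conjugacy in $\HOZ$, these pairs of contributions cancel, leaving only the boundary terms:
\[
 \sum_P \eu_P(\rho) \;=\; \sum_i \rotild\bigl(\widetilde{\rho}(\delta_i)\bigr),
\]
where the $\delta_i$ are loops representing the components of $\partial S$. This expression depends only on $\rho|_{\pi_1(S)}$ and on $\partial S$, and not on the choice of pants decomposition.

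The closed case $S = \Sigma_g$ reduces to the bordered case via the theorem of Hatcher--Thurston that any two pants decompositions of a surface are connected by a finite sequence of local moves, each supported on a 4-holed sphere or 1-holed torus subsurface $S_0$. Applying the bordered argument to $S_0$ shows that $\sum_{P \subset S_0} \eu_P(\rho)$ is invariant under the move, hence the total sum is too. The main obstacle in the proof is the combinatorial bookkeeping: one must thread basepoint paths carefully so that the cyclic relations $\alpha_P \beta_P \gamma_P = 1$ hold as honest equalities in $\pi_1(S)$, and so that, for each interior curve, the two loops produced by the adjacent pants are genuinely inverse up to conjugation. Once this is arranged, the telescoping and cancellation above are immediate.
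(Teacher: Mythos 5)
Your proof is correct, and for the closed case it rests on the same external input as the paper: connectivity of pants decompositions under the Hatcher--Thurston elementary moves, each supported in a four-holed sphere or a one-holed torus. The difference is in how invariance is verified. The paper checks the two moves directly from the definition: the one-holed torus move does not change the single pants' value (the rotation number of the boundary commutator), and for the four-holed sphere move it asserts that, for suitably chosen lifts, the sum of the two pants' Euler numbers equals the sum of the four boundary rotation numbers. You instead exploit that $\pi_1(S)$ is free when $\partial S \neq \emptyset$, lift $\rho|_{\pi_1(S)}$ to a homomorphism into $\HOZ$, rewrite each $\eu_P$ in the symmetric form as the sum of $\rotild\circ\widetilde{\rho}$ over the three boundary loops (legitimate since $\eu_P$ is lift-independent, $\rotild$ is a conjugacy invariant and $\rotild(f^{-1})=-\rotild(f)$), and telescope so that interior curves cancel in conjugate-inverse pairs. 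This buys you something the paper's argument does not state explicitly: well-definedness on every bordered subsurface without any use of the move complex, together with the closed boundary formula $\eu_S(\rho)=\sum_i\rotild(\widetilde{\rho}(\delta_i))$, which is the rotation-number avatar of the relative bounded Euler class the paper alludes to; the paper's four-term identity for the type (I) move is precisely your formula specialized to a four-holed sphere, and your global homomorphism lift makes the implicit choice of lifts in that identity transparent. Two bookkeeping remarks, neither a gap: a decomposition curve may be adjacent to the same pants on both sides (the paper explicitly allows a pants with two boundary curves identified), so the cancellation should be phrased for boundary loops counted with multiplicity; and the careful threading of basepoint paths you worry about is not really needed --- since $\rotild\circ\widetilde{\rho}$ is a class function, it suffices that for each pants separately some choice of based representatives of its three boundary loops has product $1$, which is automatic, and conjugacy invariance does the rest.
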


\begin{proof} 
Any two pants decompositions can be joined by a sequence of elementary moves; namely those of type (I) and (IV) as shown in \cite{HatcherThurston}.  A type (IV) move takes place within a pants-decomposed one-holed torus $P$ so does not change the value of $\eu_P$, which is simply the rotation number of the boundary curve, as remarked above.  Thus, the move does not change the sum of relative Euler numbers.   A type (I) move occurs within a four-holed sphere $S'$; if the boundary of the sphere is given by oriented curves $a, b, c, d$ with $dcba = 1$, then it consists of replacing the decomposition along $da$ with a decomposition along 
$ab$.  It is easy to verify by the definition that, in either case, the sum of the Euler numbers of the two pants on $S'$ is given by 
$\rotild\left(\widetilde{\rho(a)}\right) + \rotild \left(\widetilde{\rho(b)}\right) + \rotild \left(\widetilde{\rho(c)}\right) + \rotild \left(\widetilde{\rho(d)}\right)$.
\end{proof}

\begin{proposition}[Additivity of Euler number] \label{prop:AdditiveEuler}
Let $\mathcal{P}$ be any decomposition of $\Sigma$ into pants.  Then $\eu(\rho) = \sum \limits_{P \in \mathcal{P}} \eu_P(\rho)$.  
\end{proposition}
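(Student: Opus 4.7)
The plan is to invoke Lemma~\ref{lem:IndepPants} to reduce to a single pants decomposition that is adapted to the standard presentation
\[ \Gamma_g = \langle a_1, b_1, \ldots, a_g, b_g \mid \textstyle\prod_i [a_i, b_i] \rangle \]
underlying Proposition~\ref{prop:DefMilnor}, and then to reassemble $\sum_P \eu_P(\rho)$ into the Milnor--Wood formula by a telescoping calculation in $\HOZ$.

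Concretely, set $e_i := [a_i, b_i]$ and $c_i := e_1 \cdots e_i$ for $1 \leq i \leq g-1$. I would cut $\Sigma_g$ along the curves $c_1, \ldots, c_{g-1}$ together with $e_2, \ldots, e_{g-1}$. This produces $g$ one-holed tori $T(a_i, b_i)$ (each treated as a pants with two boundary components identified, as permitted by the definition) and $g-2$ honest pants $Q_i$ for $2 \leq i \leq g-1$, whose oriented boundaries are $c_{i-1}^{-1}, e_i^{-1}, c_i$. These do multiply to $1$, since $c_i = c_{i-1} e_i$ in $\Gamma_g$, so the pants formula applies. After fixing lifts $\tilde a_j, \tilde b_j \in \HOZ$ and setting $\tilde e_i := [\tilde a_i, \tilde b_i]$ and $\tilde c_i := \tilde e_1 \cdots \tilde e_i$, the definitions directly give
\[ \eu_{T(a_i, b_i)}(\rho) = \rotild(\tilde e_i), \qquad \eu_{Q_i}(\rho) = \rotild(\tilde c_i) - \rotild(\tilde c_{i-1}) - \rotild(\tilde e_i). \]

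Summing over all $2g-2$ pieces, the $\rotild(\tilde e_i)$ terms for $2 \leq i \leq g-1$ cancel, the $\rotild(\tilde c_i)$ terms telescope, and $\tilde c_1 = \tilde e_1$ yields
\[ \sum_{P \in \mathcal{P}} \eu_P(\rho) \;=\; \rotild(\tilde c_{g-1}) + \rotild(\tilde e_g). \]
To close the argument, the relation $\prod_i e_i = 1$ in $\Gamma_g$ forces $\tilde c_{g-1}\tilde e_g \in \HOZ$ to be a lift of the identity, hence an integer translation $T_n$, with $n = \eu(\rho)$ by Proposition~\ref{prop:DefMilnor}. Since $T_n$ is central in $\HOZ$, $\tilde e_g = \tilde c_{g-1}^{-1} T_n$ commutes with $\tilde c_{g-1}$, and $\rotild$ is additive on commuting elements (from $(\tilde f \tilde g)^n = \tilde f^n \tilde g^n$ together with the limit definition), so $\rotild(\tilde c_{g-1}) + \rotild(\tilde e_g) = \rotild(T_n) = n = \eu(\rho)$.

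The only non-routine point is choosing the decomposition and matching lift conventions so that the telescoping is clean and the residual boundary term reassembles into a single element whose lift is central; once that setup is in place the computation is mechanical, so I expect the main obstacle here to be bookkeeping rather than any substantive difficulty.
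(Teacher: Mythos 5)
Your proposal is correct and takes essentially the same route as the paper: both invoke Lemma~\ref{lem:IndepPants} to reduce to one pants decomposition adapted to the standard presentation of $\Gamma_g$ and then identify the resulting sum with the formula of Proposition~\ref{prop:DefMilnor}. The only differences are cosmetic -- you treat the one-holed tori as degenerate pants instead of cutting them along the $a_i$, and you write out explicitly the telescoping computation that the paper leaves to the reader.
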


By Lemma~\ref{lem:IndepPants}, we may use any pants decomposition to compute
the Euler class. By using a standard generating system $(a_1,\ldots,b_g)$
and cutting $\Sigma_g$ along geodesics freely homotopic to $a_i$,
$c_i=[a_i,b_i]$, for $i=1,\ldots,g$ and $d_i=c_i\cdots c_1$ for
$i=2,\ldots,g-1$, we recover the formula taken as a definition in
Proposition~\ref{prop:DefMilnor}.
 
We now return to our main goal: we prove that maximality of the Euler
class holds first on small subsurfaces, then globally on $\Sigma_g$.  
\begin{proposition}\label{prop:S4}
  Let $S\subset\Sigma_g$ be a subsurface homeomorphic to a four-holed
  sphere. Suppose that none of the boundary components of $S$ is separating
  in $\Sigma_g$, and let $\rho$ be a path-rigid, minimal representation
  mapping one boundary component of $S$ to a hyperbolic element of $\HOS$.
  Then, $\rho$ maps all four boundary components of $S$ to hyperbolic elements,
  and the relative Euler class $\eu_S(\rho)$ is equal to $\pm 2$.
\end{proposition}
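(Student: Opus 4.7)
\emph{Proof plan.} I will argue in two parts: first, that all four boundary components of $S$ are mapped to hyperbolic elements, and second, that $\eu_S(\rho) = \pm 2$.

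\emph{Part 1 (Hyperbolicity of all boundaries).} Denote the boundary components of $S$ by $a_1, \ldots, a_4$, with $\rho(a_1)$ hyperbolic by hypothesis. The plan is to show that all four $a_i$ lie in a single $\sim_i$-equivalence class and then invoke Corollary~\ref{ToutHyp}. For each $j \in \{2, 3, 4\}$, I would construct a simple closed curve $\gamma_j \subset \Sigma_g$ by taking an embedded arc in $S$ from a point on $a_1$ to a point on $a_j$ and closing it up through an arc in $\Sigma_g \smallsetminus S$; such a closing arc exists because $a_1$ is nonseparating in $\Sigma_g$, so the two sides of $a_1$ lie in the same component of $\Sigma_g \smallsetminus a_j$. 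The resulting curve satisfies $i(\gamma_j, a_1) = \pm 1$ and $i(\gamma_j, a_j) = \pm 1$, and is nonseparating since it has nonzero algebraic intersection with the nonseparating curve $a_1$. Hence $a_1 \sim_i \gamma_j \sim_i a_j$, and Corollary~\ref{ToutHyp} yields hyperbolicity of each $\rho(a_j)$.

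\emph{Part 2 (Maximality of the Euler number).} Decompose $S = P_1 \cup P_2$ along an interior curve $e$; by Proposition~\ref{prop:AdditiveEuler} and Lemma~\ref{rk:EulerBound}, $|\eu_S(\rho)| \leq 2$, and the content of the proof is to exhibit the equality. Take the canonical lift $\widetilde{\rho(a_i)} \in \HOZ$ of each $\rho(a_i)$ with $\rotild(\widetilde{\rho(a_i)}) = 0$ (which exists because $\rho(a_i)$ is hyperbolic). The relation $a_1 a_2 a_3 a_4 = 1$ in $\pi_1(S)$ implies that $\Pi := \widetilde{\rho(a_1)}\,\widetilde{\rho(a_2)}\,\widetilde{\rho(a_3)}\,\widetilde{\rho(a_4)}$ is a lift of the identity of $S^1$, hence an integer translation $T_k$, and unwinding the pants formula gives $\eu_S(\rho) = \pm k$. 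The goal thus reduces to $|k| = 2$.

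For this I would apply Proposition~\ref{prop:Ordre5Chaines} to a collection of directed $5$-chains in $\Sigma_g$ built from the boundaries $a_i$ and the auxiliary curves $\gamma_j$ of Part~1. After suitable bending deformations that realize powers of Dehn twists — which, by Corollary~\ref{minconj} (using that $\rho$ is minimal and path-rigid), preserve the conjugacy class of $\rho$ — triples among the four $a_i$ appear as the ``odd-indexed'' entries of directed 5-chains. This pins down the cyclic order of the eight points $\{(a_i)_-, (a_i)_+\}_{i=1}^{4}$ on $S^1$ to be the one realized by a standard Fuchsian representation of a four-holed sphere. A direct dynamical computation — tracking a point just clockwise of $(a_1)_-$ under the successive actions of $\widetilde{\rho(a_4)}, \widetilde{\rho(a_3)}, \widetilde{\rho(a_2)}, \widetilde{\rho(a_1)}$ using the hyperbolic north/south dynamics of Observation~\ref{obs:Punch} — then shows that $\Pi$ displaces this point by exactly two fundamental domains, giving $|k| = 2$.

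\emph{Main obstacle.} The chief technical difficulty lies in Part~2. A single directed $5$-chain provides the relative cyclic order of only three pairwise disjoint curves ($\gamma_1, \gamma_3, \gamma_5$), whereas $S$ has four pairwise disjoint boundaries, so the full cyclic order of the eight points must be assembled from the partial information supplied by several different chains, with attention paid to ensure that the Dehn-twist deformations used to bring each chain into the desired form are mutually compatible and do not alter the configuration already established. Translating the resulting Fuchsian configuration into the precise dynamical conclusion $|k| = 2$ is then elementary but requires careful bookkeeping.
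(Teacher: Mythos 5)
The decisive problem is in your Part 2. Knowing only the cyclic order of the eight fixed points of the four boundary curves does \emph{not} determine $\eu_S(\rho)$, so the ``direct dynamical computation'' you describe cannot close. Concretely, with zero-rotation-number lifts, tracking a point under the composition of the four lifts using only north--south dynamics and the cyclic order of $\Fix(\rho(a_i))$ pins the integer $k$ down only to two consecutive values (in the paper's normalization, $k\in\{-1,-2\}$); ruling out the non-maximal value requires information about the \emph{interior} curve of a pants decomposition of $S$. Indeed, nothing in your data prevents $\rho$ of the interior curve (the element $ad$, equivalently $(cb)^{-1}$) from being a fixed-point-free, elliptic-like homeomorphism, in which case each pants contributes a non-integral amount and the sum can fail to be $\pm 2$; already for a single pants, two hyperbolic elements of $\PSL$ with short translation lengths and disjoint axes in the ``Fuchsian'' unlinked pattern can have elliptic product, so the boundary fixed-point pattern does not even determine one pants' contribution. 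This is precisely why the paper's proof brings the interior curve into the chains: it chooses two auxiliary curves $u,v$ and applies Proposition~\ref{prop:Ordre5Chaines} to the directed $5$-chains $(a,u,d^{-1}a^{-1},v,d)$ and $(c,v,ad,u,b)$, thereby proving that $\rho(ad)$ is hyperbolic and locating $(ad)_\pm$ relative to the boundary fixed points; the final computation is then anchored at $x=(ad)_-$, where the exact relations $ad(x)=x$ and $cb(x)=x$ convert the qualitative ordering into the exact displacement $-1$ for each of the two pants. Your outline never controls $\rho(ad)$ at all, and this is the missing idea, not mere bookkeeping. (A further doubt: your plan to make triples of the $a_i$ appear as odd-indexed entries of $5$-chains may not be realizable --- two boundary components of $S$ may even be freely homotopic in $\Sigma_g$, and the component structure of $\Sigma_g\smallsetminus S$ obstructs some of the required connecting curves --- whereas the paper needs only the two chains above.)

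Part 1 also has a gap as written. Since each component of $\Sigma_g\smallsetminus S$ must meet at least two boundary curves of $S$, the complement has one or two components; in the two-component case a pair $a_1,a_j$ lying on different components cannot be joined by an arc in $\Sigma_g\smallsetminus S$, so your closing arc need not exist (your justification concerns $\Sigma_g\smallsetminus a_j$, not $\Sigma_g\smallsetminus S$). This is repairable, e.g.\ by a simple closed curve crossing each of the four boundary curves once, or by chaining through auxiliary curves as in the paper. Also, Corollary~\ref{ToutHyp} assumes the Fuchsian-torus hypothesis, which is not among the hypotheses of this proposition; what you actually need is to iterate Proposition~\ref{Presque} (path-rigidity, minimality, and one hyperbolic simple closed curve), which is how the paper deduces hyperbolicity of all curves in its chains.
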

In the statement above, we do not require that the boundary components are
geodesics for some metric on $\Sigma_g$, in particular, two of them may well
be freely homotopic.
\begin{proof}
  Put the base point inside of $S$. The complement $\Sigma_g\smallsetminus S$
  may have one or two connected components, since none of the curves
  of $\partial S$ are separating in $\Sigma_g$. In either case, we may find
  two based, nonseparating, simple closed curves $u,v\in\Gamma_g$, with
  $i(u,v)=0$, each having nonzero intersection number with exactly two of the
  boundary components of $S$, as shown in Figure~\ref{fig:S4}.
  Additionally, we may fix orientations for $u$ and $v$ and choose four
  elements $a,b,c,d\in\pi_1 S$, each
  freely homotopic to a different boundary component of $S$, with $dcba=1$,
  and such that
  $(a,u,d^{-1}a^{-1},v,d)$ and $(c,v,ad,u,b)$ are directed $5$-chains in
  $\Sigma_g$.  
  \begin{figure}[htb]
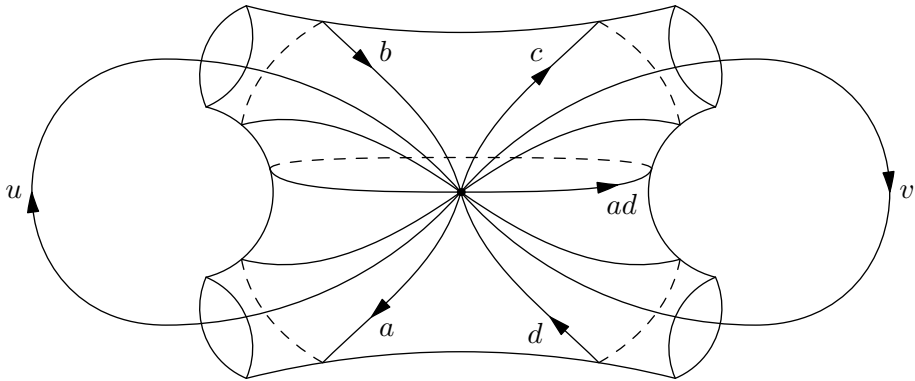

  \begin{asy}
    import geometry;
    
    point p1 = (-80,70), p2 = (-95,32);
    path contourhaut = p1..(0,60)..reflect((0,0),(0,1))*p1;
    path contourbas = reflect((0,0),(1,0))*contourhaut;
    path contourgauche = p2..(-70,0)..reflect((0,0),(1,0))*p2;
    path contourdroit = reflect((0,0),(0,1))*contourgauche;
    
    path u = (0,0)..(-110,-50){left}..(-110,50){right}..(0,0);
    path v = rotate(180)*u;
    path troudev = p1::((p1+p2)/2+10*dir(-60))::p2;
    path trouder = p1::((p1+p2)/2+10*dir(120))::p2;
    path b1 = (0,0){dir(145)}..point(contourgauche,0.34){dir(195)};
    path b3inv = (0,0){dir(105)}..point(contourhaut,0.36){dir(135)};
    path b2 = point(b1,1){dir(80)}..point(b3inv,1){dir(30)};
    path addev = point(contourgauche,0.8){dir(-75)}::(0,0){right}::point(contourdroit,0.8){dir(75)};
    path adder = point(addev,0){dir(75)}::(0,13){right}::point(addev,2){dir(-75)};
    
    draw(contourhaut); draw(contourbas); draw(contourdroit);
    draw(contourgauche); dot((0,0));
    draw(u,Arrow(Relative(0.5))); draw(v,Arrow(Relative(0.5)));
    label("\small $u$",point(u,1.5),W); label("\small $v$",point(v,1.5),E);
    draw(troudev); draw(trouder); draw(reflect((0,0),(0,1))*troudev);
    draw(reflect((0,0),(0,1))*trouder); draw(rotate(180)*troudev);
    draw(rotate(180)*trouder); draw(reflect((0,0),(1,0))*troudev);
    draw(reflect((0,0),(1,0))*trouder);
    draw(b1); draw(reverse(b3inv),Arrow(Relative(0.3)));
    label("\small $b$",point(b3inv,0.7),NE); draw(b2,dashed);
    draw(reflect((0,0),(1,0))*b1); draw(reflect((0,0),(1,0))*b2,dashed);
    draw(reflect((0,0),(1,0))*b3inv,Arrow(Relative(0.7)));
    label("\small $a$",point(reflect((0,0),(1,0))*b3inv,0.7),SE);
    draw(rotate(180)*b1); draw(rotate(180)*b2,dashed);
    draw(rotate(180)*reverse(b3inv),Arrow(Relative(0.3)));
    label("\small $d$",point(rotate(180)*b3inv,0.7),SW);
    draw(reflect((0,0),(0,1))*b1); draw(reflect((0,0),(0,1))*b2,dashed);
    draw(reflect((0,0),(0,1))*b3inv,Arrow(Relative(0.7)));
    label("\small $c$",point(reflect((0,0),(0,1))*b3inv,0.7),NW);
    draw(adder,dashed); draw(addev,Arrow(Relative(0.9)));
    label("\small $ad$",point(addev,1.7),S);
  \end{asy}
  \caption{A four-holed sphere and two $5$-chains}
  \label{fig:S4}
  \end{figure}
  As we have assumed that the image under $\rho$ of one of $a,b,c$ or $d$ is hyperbolic, Proposition~\ref{Presque} 
  implies that all the curves appearing in these
  $5$-chains are in fact hyperbolic.   

  Orient the circle so that $(u_-,(ad)_+,u_+,(ad)_-)$ are in cyclic order
  (as before, we drop the letter $\rho$ from the notation, for better
  readability).
  Then, Proposition~\ref{prop:Ordre5Chaines} applied to the two directed
  $5$ chains above gives the cyclic orderings
  \[ (a_-,u_-,a_+,(ad)_+,u_+,v_-,(ad)_-,d_-,v_+,d_+) \]
  and
  \[ (c_-,v_-,c_+,(ad)_-,v_+,u_-,(ad)_+,b_-,u_+,b_+). \]
  These two orderings together yield the cyclic ordering
  \[ ((ad)_-,d_-,d_+,a_-,a_+,(ad)_+,b_-,b_+,c_-,c_+). \]
  
  We now use this ordering to prove maximality of the Euler class.
  Let $\alpha,\beta,\gamma$ and $\delta$, respectively, denote the lifts
  of $\rho(a)$, $\rho(b)$, $\rho(c)$ and $\rho(d)$ to $\HOZ$ with translation
  number zero. Let $x=(ad)_-$ be the repelling fixed point of $ad$.

  Since $x$ has a repelling fixed point of $d$ immediately to the right, and
  an attracting fixed point of $d$ to the left, we have $\delta(x) < x$.
  By the same reasoning, if $y$ is any point in the interval between
  consecutive lifts of fixed points $a_+$ and $a_-$ containing $x$, then
  $\alpha(y) < y$. Since $ad(x) = x$, it follows that $\delta(x)$ must lie
  to the left of the lift of $a_+$, and we have $\alpha \delta (x) = x-1$.
  
  Since $cbad = 1$, we also have that $cb(x) = x$. Considering the location
  of repelling points of $b$ and $c$ and imitating the argument above,
  we have again $\beta(x) < x$, and also $\gamma \beta(x) < x$.
  It follows that $\gamma \beta(x) = x-1$, hence
  $\gamma\beta\alpha\delta(x)=x-2$, and $\eu_S(\rho)=-2$.
\end{proof}

Using this information about subsurfaces, we can prove that the Euler number
of $\rho$ is maximal.
\begin{proposition}
  Let $\rho$ be path-rigid, and suppose that $\rho$ admits a Fuchsian torus.
  Then $\rho$ has Euler number $\pm (2g- 2)$.
\end{proposition}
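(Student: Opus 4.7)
The plan is to decompose $\Sigma_g$ into four-holed sphere subsurfaces and combine Proposition \ref{prop:S4} with additivity of the Euler number. First, applying Corollary \ref{ToutHyp}, after replacing $\rho$ with a semi-conjugate minimal representation (which preserves $\eu(\rho)$), we may assume that $\rho(\gamma)$ is hyperbolic for every nonseparating simple closed curve $\gamma$ in the $\sim_i$-equivalence class of $a$. Using standard change-of-coordinates surface topology, I would then exhibit a decomposition $\Sigma_g = S_1 \cup \cdots \cup S_{g-1}$ along $2g-2$ nonseparating simple closed curves, each in the $\sim_i$-class of $a$, with every $S_i$ a four-holed sphere. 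Concretely, such a decomposition can be built inductively starting from the Fuchsian torus $T(a,b)$: enlarge to a four-holed sphere containing $T(a,b)$ by adjoining an adjacent pair of pants, then attach successive four-holed spheres along nonseparating curves that are $\sim_i$-connected back to $a$.

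With such a decomposition in hand, Proposition \ref{prop:S4} applies to each $S_i$ (the boundary curves are nonseparating in $\Sigma_g$, and at least one of them maps to a hyperbolic element), yielding $\eu_{S_i}(\rho) = 2\epsilon_i$ with $\epsilon_i \in \{-1,+1\}$. Refining the decomposition to a pants decomposition and applying additivity of the Euler number (Proposition \ref{prop:AdditiveEuler}) gives
\[
  \eu(\rho) \;=\; \sum_{i=1}^{g-1} \eu_{S_i}(\rho) \;=\; 2\sum_{i=1}^{g-1} \epsilon_i,
\]
so $|\eu(\rho)| \leq 2(g-1) = 2g-2$, with equality if and only if the signs $\epsilon_i$ all coincide.

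The main obstacle is establishing that the signs $\epsilon_i$ do all agree; this is what upgrades the bound $|\eu(\rho)|\leq 2g-2$ (which we already have from Milnor--Wood) to the required equality. I would prove agreement by propagation along the adjacency graph of the decomposition: whenever $S_i$ and $S_{i+1}$ share a boundary curve $\gamma$, the element $\rho(\gamma)$ is hyperbolic and its attracting/repelling fixed points $\rho(\gamma)_\pm$ form a reference on $S^1$ visible in both subsurface analyses. The sign $\epsilon_i$ is read off in the proof of Proposition \ref{prop:S4} from the cyclic ordering on $S^1$ of fixed points of hyperbolic elements in an auxiliary directed $5$-chain adapted to $S_i$. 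By constructing a directed $5$-chain that straddles $\gamma$, with some of its curves lying in $S_i$ and others in $S_{i+1}$ (each intersecting $\gamma$ transversely once), Proposition \ref{prop:Ordre5Chaines} imposes a single globally consistent cyclic ordering, forcing $\epsilon_i = \epsilon_{i+1}$. Since the adjacency graph of the decomposition is connected, all $\epsilon_i$ coincide and we conclude $|\eu(\rho)| = 2(g-1) = 2g-2$.
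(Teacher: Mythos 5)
Your first half — pass to a minimal semi-conjugate representative, cut $\Sigma_g$ into four-holed spheres with nonseparating boundary curves, apply Proposition \ref{prop:S4} to each piece and sum using Proposition \ref{prop:AdditiveEuler} — is sound (and the $\sim_i$-bookkeeping is not even needed: Proposition \ref{prop:S4} makes \emph{all four} boundary curves hyperbolic, so hyperbolicity propagates along the connected adjacency graph from a single curve $a$ of the Fuchsian torus). But the step you yourself identify as the main obstacle, that the signs $\epsilon_i$ all agree, is asserted rather than proved, and this is the actual content of the proposition beyond Milnor--Wood. The sign in the proof of Proposition \ref{prop:S4} is \emph{not} a function of the abstract cyclic order supplied by Proposition \ref{prop:Ordre5Chaines} — that order is only pinned down ``up to reversing the orientation of $S^1$'' — it is determined by how the fixed points of the four boundary elements sit relative to one fixed orientation of the circle, via the lift computation ($\alpha\delta(x)=x-1$, etc.). So saying that a $5$-chain straddling the common curve ``imposes a single globally consistent cyclic ordering, forcing $\epsilon_i=\epsilon_{i+1}$'' is not yet an argument: you would have to relate the orientation class of the straddling chain to both subsurface-adapted chains and rerun the sign computation on each side. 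Nothing in the proposal does this, so as written there is a genuine gap exactly where the bound $|\eu(\rho)|\le 2g-2$ must be upgraded to equality.

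The paper closes this gap by a different choice of four-holed spheres. It fixes a pants decomposition along nonseparating curves, takes a walk $P_1,\dots,P_N$ on its connected dual graph visiting every pair of pants, and applies Proposition \ref{prop:S4} to the \emph{overlapping} four-holed spheres $S_i=P_i\cup P_{i+1}$. Since $|\eu_P(\rho)|\le 1$ for every pair of pants (Lemma \ref{rk:EulerBound}), the equality $\eu_{S_i}(\rho)=\pm 2$ forces $\eu_{P_i}(\rho)=\eu_{P_{i+1}}(\rho)=\pm 1$ with a \emph{common} sign, and because consecutive $S_i$ share a pair of pants the sign propagates to all $2g-2$ pants, giving $\eu(\rho)=\pm(2g-2)$ by additivity — no comparison of cyclic orderings across subsurfaces is ever needed. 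If you want to keep your disjoint decomposition, the cheapest repair is the same trick: split each $S_i$ into two pants, and for each gluing curve insert an auxiliary four-holed sphere made of one pants from each side; Proposition \ref{prop:S4} applied to it, together with Lemma \ref{rk:EulerBound}, forces the adjacent signs to coincide.
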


\begin{proof}
  After semi-conjugacy, we may assume that $\rho$ is minimal.
  Let $T(a,b)$ be a Fuchsian torus for $\rho$. By Corollary~\ref{Kt},
  we may suppose that $\rho(a)$ is hyperbolic.
  Ignoring the curve $b$, find a system of simple closed curves 
  $a_1 = a,  a_2, \ldots, a_{g-1}$, with each $a_i$ nonseparating, that decomposes $\Sigma_g$ into 
  a disjoint union of pairs of pants.  
  
  The dual graph of such a pants decomposition is connected
  (because $\Sigma_g$ is connected), so we may choose
  a finite path that visits all the vertices. In other words, we may choose
  a sequence $P_1,\ldots,P_N$ of pants from the decomposition
  (possibly with repetitions), that contains each of the pants of the
  decomposition, such that each two consecutive pants $P_i$ and $P_{i+1}$
  are distinct, but share a boundary component. Let $S_i$ denote the
  four-holed sphere obtained by taking the union of $P_i$ and $P_{i+1}$ along 
  a shared boundary curve.  (If $P_i$ and $P_{i+1}$ share more than one
  boundary component, choose only one).
  We may further assume that $a$ is one of the boundary curves of $S_1$.
  
  Starting with $S_1$ as the base case, and applying
  Proposition~\ref{prop:S4}, we inductively conclude that all boundary
  components of all the $S_i$ are hyperbolic,
  and that $\eu_{S_i}(\rho)=\pm 2$.  Thus, the
  contributions of $P_i$ and $P_{i+1}$ are equal, and equal to $\pm 1$,
  for all $i$. It follows that the contributions of all pairs of pants
  of the decomposition have equal contributions, equal to $\pm 1$.  By
  definition of the Euler class, we conclude that $\eu(\rho)=\pm (2g-2)$.
\end{proof}

The proof of Theorem \ref{thm:Main} now concludes by citing Matsumoto's
result of \cite{Matsumoto} that such a representation of maximal Euler number
is semi-conjugate to a Fuchsian representation.
\qed

\bibliographystyle{plain}

\bibliography{BabyBiblio}

\end{document}